\theoremstyle{plain}
\newtheorem{Thm}{Theorem}[section]
\newtheorem{Cor}{Corollary}[section]
\newtheorem{Lem}{Lemma}[section]
\newtheorem{Pro}{Proposition}[section]
\newtheorem{Rek}{Remark}[section]
\theoremstyle{definition}
\def\R{\mathbb{R}}
\def\e{\varepsilon}
\def\d{\mathrm{d}}
\def\abs#1{|{#1}|}
\def\norm#1{\|{#1}\|}
\def\supp{\mbox{supp}}
\def\half{{\frac{1} {2}}}
\title{\bf Qualitative analysis on logarithmic Schr\"odinger equation
with general potential}
\author[1]{Chengxiang Zhang\thanks{E-mail address: zcx@bnu.edu.cn}}
\author[2]{Luyu Zhang\thanks{E-mail address: zhangluyu@ncwu.edu.cn }}
\affil[1]{\footnotesize Laboratory of Mathematics and Complex Systems (Ministry of Education), School of Mathematical Sciences, Beijing Normal University, Beijing 100875,  China}
\affil[2]{\footnotesize School of Mathematics and Statistics, North China University of Water Resources and Electric Power, Zhengzhou 450046, China}
\date{}
\begin{document}
\maketitle
\begin{abstract}
	In this paper, we study the existence, uniqueness, nondegeneracy and some qualitative properties
	of positive solutions for  the logarithmic Schr\"odinger equations:
	\[ -\Delta u+ V(|x|)	u=u\log u^2,  \quad
			u\in H^1(\R^N).
   \]
   Here  $N\geq 2$ and $V\in C^2((0,+\infty))$ is allowed to be  singular at $0$ and repulsive at infinity (i.e., $V(r)\to-\infty$ as  ${r\to\infty}$).
   Under some general  assumptions, we show  the existence, uniqueness and nondegeneracy of this equation in the radial setting.
   Specifically,  
   these results apply to singular potentials such as $V(r)=\alpha_{1}\log r+\alpha_2 r^{\alpha_3}+\alpha_4$ with  $\alpha_1>1-N$, $\alpha_2, \alpha_3\geq 0$ and $\alpha_4\in\R$, 
   which is repulsive for $\alpha_1<0$ and $\alpha_2=0$. 
   We also investigate the connection between some power-law nonlinear Schr\"odinger 
   equation with a critical frequency potential 
   and the logarithmic-law Schr\"odinger 
   equation with $V(r)=\alpha\log r$, $\alpha>1-N$,  
   proving convergence of the unique positive radial solution from 
   the power type problem to the logarithmic type problem.
   Under a further assumption, we also derive the uniqueness and nondegeneracy results
   in $H^1(\R^N)$ by showing the radial symmetry of solutions.
\end{abstract}

	\vspace{6mm} \noindent{\bf Keywords:} Uniqueness and nondegeneracy; Logarithmic nonlinearity; Singular and repulsive potentials; Qualitative analysis
	
	\vspace{6mm}\noindent {\bf AMS} Subject Classification: 35A02, 35B25, 35B40,  35Q55.

\section{Introduction}

We investigate the existence,  uniqueness and nondegeneracy of positive solutions to
	 \begin{equation}\label{P1}
	 	 -\Delta u+ V(|x|)	u=u\log u^2,  \quad
	 		 u\in H^1(\R^N),
	 \end{equation}
where $N\geq 2$, $V\in C^2((0,+\infty))$ is allowed to be singular at $0$ and unbounded from below at infinity.
Problem \eqref{P1} comes from the study of standing waves to the nondispersive logarithmic Schr\"odinger equation
\begin{equation} \label{eq1.2}
 i\hbar\frac{\partial \Phi}{\partial t} = - \Delta \Phi + V(x) \Phi - \Phi\log|\Phi|^2 \quad \mbox{in}\quad \R^N\times \R, 
\end{equation}
where $\hbar$ is the Plank constant and $i$ is the imaginary unit.
For constant potential $V$, this equation was introduced in 
\cite{BM1,BM2}
to satisfy a specific tensorization property. See also   \cite{Carles} for a brief introduction on this property.
The equation \eqref{eq1.2} has been applied in  several
fields such as the Bose-Einstein condensation, nuclear physics,  
and the optics. See \cite{BM1,BM2,Ca,Carles,BCST,ZRZ,ZKg} and the reference therein for more discussion on \eqref{eq1.2}.
We also refer \cite{CG} for the dispersive case.

For the standing waves problem \eqref{eq1.2},
mathematical analysis has been carried out in recent years
on the existence and  multiplicity of solutions (\cite{dMS,Shuai,Jichao,ZW,ZZ,ITWZ,Sz,TZ}).
When $V$ is constant, up to a shift (see e.g. \cite{dMS,ITWZ,TZ}), \eqref{P1}
is equivalent to the following equation
\begin{equation}\label{econst}
	-\Delta u =u\log u^2,\quad u\in H^1(\R^N).
\end{equation}
It has been pointed in \cite{BM1,BM2,dMS,Troy} 
that \eqref{econst} has a positive solution, i.e., the Gausson. In \cite{dMS}, 
d'Avenia et al. also proved  the existence of infinitely many radially symmetric  sign-changing solutions to \eqref{econst} when $N\geq 3$.
Existence results are also derived for various a nonconstant potentials.
The authors in \cite{Shuai,Jichao} proved   existence of solutions for different type of potentials that are bounded from below.
In \cite{ITWZ,ZZ}, the authors consider the  semiclassical bound states of \eqref{P1} under some
general assumptions that allow the potential to be unbounded from below at a finite number of singular points or
at infinity.

Another interest in the study of nonlinear Schr\"odinger type equations is the uniqueness of the 
positive solution. 
We note that the study of  uniqueness for nonlinear Schr\"odinger equation with power type nonlinearity 
was started by Coffman \cite{Coffman} and
had been widely extended to general nonlinearities and  general 
potentials in \cite{Chenlin,Ko,Kwongzhang,McLeod1,McLeod2,Ni,Serrintang,KT,BO,Yanagida}.
Some of these results can be partially applied to the logarithmic equation \eqref{econst}.
The uniqueness result of  Serrin-Tang \cite{Serrintang} and the symmetry result of d'Avenia et al. \cite[Proof of Theorem 1.2]{dMS}
 imply that
the Gausson is the unique positive radial
solution   when $N\geq 3$. The special case $N=2$ was  treated by Troy \cite{Troy}.
When  $1\leq N\leq 9$, Troy \cite{Troy} proved that the Gausson is the only solution to \eqref{econst}, 
among positive  $u$'s satisfying $(u(r), u'(r))\to (0,0)$ as $r\to\infty$,
by energy estimates and Ricatti equation estimates.
However, it is not clear whether there is a unified way to derive existence and uniqueness results for problem 
\eqref{P1} with a general potential and for  $N\geq 2$.
 
In this paper, we will deal with  \eqref{P1} under some   
general assumptions on the  potential $V$ that is probably unbounded from below at $0$ or infinity.  A typical example in this case is the following problem appeared in 
\cite{ITWZ} as a limit equation of a semiclassical problem with singularities,
\begin{equation}\label{eq1.11'}
	-\Delta u+(\alpha \log |x|) u= u\log u^2,\quad u\in H^1(\R^N).
\end{equation}
It has been proved in \cite{ITWZ} that this equation has a positive ground state when $\alpha>0$. Here we assume $\alpha>1-N$, and note that the potential $\alpha\log |x|$ is 
repulsive when $\alpha\in(1-N,0)$.
We will obtain  general  existence, uniqueness and nondegeneracy  results
  for \eqref{P1}  which covers   \eqref{eq1.11'}. 
Moreover,  we also discuss the connection between  \eqref{eq1.11'} and a power nonlinear  equation with a critical frequency (\cite{BW1,BW2}),
which extends the result of 
\cite{WZ}.

  To state the results,
following the ideas of Byeon-Oshita \cite{BO} and  Kabeya-Tanaka \cite{KT}, for $\delta\geq0$, we consider the following perturbed problem which is slightly more general than \eqref{P1},
\begin{equation}\label{P1delta}
	-\Delta u+(V +\delta a ) u=(1+\delta b )u\log u^2,\quad u\in H^1(\R^N),
\end{equation}
where $a, b$ and $V$ satisfy
\begin{enumerate}
	\item[(Vab)] $a\in C^2_0(\R^N), b\in C^3_0(\R^N)$ 
	are radially symmetric functions, $b\geq 0$ and $b=1$ in a neighborhood of $0$.
	 \item[(V1)] $V\in C^2((0,+\infty))$, 
	 $ \liminf_{r\to+\infty} \frac{V(r)}{\log r}\in(1-N,+\infty]$,
	  $V(|x|)\in L_{loc}^q(\R^N)$ for some $q> N$.
\end{enumerate}
We have the following  result on existence.
\begin{Thm}\label{Thm1}
	Assume (Vab), (V1) and $\delta\geq 0$. Then \eqref{P1delta} has a positive radial solution 	
	$u\in C^{2-\gamma}(\R^N)\cap C^2(\R^N\setminus\{0\})$, where $\gamma:= \frac{N}q$.
	Moreover,
   for each $\tau\in(0,\frac12)$, there holds 
   \begin{equation}\label{decay}
	   \lim_{r\to\infty}u(r) e^{\tau r^2}=0. 
   \end{equation}
\end{Thm}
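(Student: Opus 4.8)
\textbf{The plan} is to obtain the positive radial solution variationally and then upgrade its regularity and decay by elliptic comparison. I would work in $H:=H^1_{\mathrm{rad}}(\R^N)$ with the energy
\[
	J_\delta(u)=\half\norm{\nabla u}_2^2+\half\int_{\R^N}(V+\delta a+1)u^2\,\d x-\half\int_{\R^N}(1+\delta b)\,u^2\log u^2\,\d x
\]
associated with \eqref{P1delta}, whose formal critical points solve \eqref{P1delta} (the extra $\half\int u^2$ being the primitive of the linear part of $u\log u^2$). As is standard for logarithmic equations (cf. \cite{dMS,Sz,ITWZ}), the last integral need not be finite on $H$ and $J_\delta$ is not $C^1$; I would split off its nonnegative part $u\mapsto-\half\int_{\{u^2<1\}}(1+\delta b)u^2\log u^2\in[0,+\infty]$, convex and lower semicontinuous on $H$, the remainder being $C^1$ because $|s^2\log s^2|\le C_\sigma|s|^{2+\sigma}$ for $|s|\ge1$, and then use nonsmooth critical point theory on $\{u\in H:J_\delta(u)<\infty\}$; alternatively one may penalize $\log$ near $0$, solve the regularized equation, and pass to the limit. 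By symmetric criticality a radial critical point solves \eqref{P1delta} on all of $\R^N$. A fact used repeatedly is that, by (Vab), (V1) and Strauss' radial estimate $|u(r)|\le C_N r^{-(N-1)/2}\norm u_H$, the ``effective potential'' $V(r)-(1+\delta b)(r)\log u(r)^2$ is bounded below, in fact $\gtrsim\sigma\log r$ for $r$ large; this makes $J_\delta$ well defined with values in $(-\infty,+\infty]$.

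\textbf{Mountain-pass geometry.} The logarithm supplies the geometry almost for free: for a fixed radial $\phi\in C_c^\infty(\R^N)\setminus\{0\}$,
\[
	J_\delta(t\phi)=\frac{t^2}2\Bigl[\norm{\nabla\phi}_2^2+\int(V+\delta a+1)\phi^2\Bigr]-t^2(\log t)\!\int(1+\delta b)\phi^2+\O(t^2)\qquad(t\to0^+),
\]
and since $-t^2\log t\gg t^2$ while $1+\delta b\ge1$, the positive term $-t^2(\log t)\int(1+\delta b)\phi^2$ dominates, so $J_\delta(t\phi)>0$ for small $t>0$, whereas $J_\delta(t\phi)\to-\infty$ as $t\to+\infty$. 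The uniform bound $\inf_{\norm u_H=\rho}J_\delta\ge\rho'>0$ for small $\rho$ follows from the same mechanism: the nonnegative part $-\half\int_{\{u^2<1\}}u^2\log u^2$ absorbs both the possibly very negative contribution of $V$ at infinity (through $-\log u^2\ge(N-1)\log r-C$ against $V\ge(1-N+\sigma)\log r-C_0$) and, near $0$, the term $\half\int Vu^2$, controlled by H\"older and Sobolev since $V\in L^q_{loc}$ with $q>N$. A mountain-pass theorem (in nonsmooth form, or applied to the regularized functional), uniformly for $\delta$ in compact sets, then yields a Palais--Smale sequence $(u_n)$ at a level $c_\delta>0$.

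\textbf{Compactness, existence, regularity.} Boundedness of $(u_n)$ in $H$ follows from $J_\delta(u_n)-\half\inp{J_\delta'(u_n),u_n}=\half\int(1+\delta b)u_n^2$ (which bounds $\norm{u_n}_2$) fed into the logarithmic Sobolev inequality $\int u^2\log u^2\le\frac{a^2}\pi\norm{\nabla u}_2^2+(\log\norm u_2^2-N-N\log a)\norm u_2^2$ (any $a>0$), which then bounds $\norm{\nabla u_n}_2$. Up to a subsequence $u_n\rightharpoonup u$ in $H$, $u_n\to u$ a.e. and in $L^p(\R^N)$ for $2<p<2^*$ (radial compactness), which handles the subcritical part of the nonlinearity. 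Strong $H$-convergence is the delicate point, since $u\mapsto\norm{\nabla u}_2^2+\int(V+\delta a+1)u^2$ need not be coercive when $V(r)\to-\infty$; what prevents escape of mass is again the logarithm, acting at infinity as the confining potential $V(r)-(1+\delta b)(r)\log u_n(r)^2\gtrsim\sigma\log r$ along the bounded sequence. I would make this quantitative by testing the equation for $u_n$ against $u_n$ times a cutoff to get $\int_{\{|x|>R\}}\bigl(1+|V|+|\log u_n^2|\bigr)u_n^2\to0$ uniformly in $n$ as $R\to\infty$, and combine this with the local convergence and a Br\'ezis--Lieb splitting of $\int(1+\delta b)u_n^2\log u_n^2$ to pass to the limit in $J_\delta'(u_n)\to0$; replacing the resulting critical point by its modulus (the functional is even) and using the strong maximum principle gives a positive radial $u\in H^1(\R^N)$. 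For regularity: $u\in L^\infty$ by Moser iteration (the growth of $s\log s^2$ is subcritical), hence $g:=(1+\delta b)u\log u^2-(V+\delta a)u\in L^q_{loc}$ and $u\in W^{2,q}_{loc}(\R^N)\hookrightarrow C^{1,1-N/q}(\R^N)=C^{2-\gamma}(\R^N)$ by Morrey, while away from the origin the coefficients are $C^2$ and $u>0$, so Schauder bootstrap gives $u\in C^2(\R^N\setminus\{0\})$.

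\textbf{Gaussian decay.} Since $a,b$ have compact support and $u(r)\to0$, for $r$ large $u$ solves $-\Delta u+c(r)u=0$ with $c(r)=V(r)-\log u(r)^2$, and $c(r)\ge(N-1)\log r-C+V(r)\ge\sigma\log r-C'$. Comparing $u$ on $\{r>R\}$ with the radial supersolution $\exp\!\bigl(-(1-\e)\!\int_R^r\!\sqrt{c(t)}\,\d t\bigr)$ of $-\Delta w+cw=0$ (legitimate because $(1-\e)^2<1$ and $c$ is eventually positive) gives first a super-polynomial bound $u(r)\le C\exp(-c_1 r\sqrt{\log r})$, so $\phi(r):=-\log u(r)\to+\infty$ faster than $\log r$ and therefore $c(r)=V(r)+2\phi(r)\ge(2-\e)\phi(r)$ for $r$ large. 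Reinserting this into the same comparison yields the closed inequality $\phi(r)\ge(1-\e)\int_R^r\sqrt{(2-\e)\,\phi(t)}\,\d t$; writing $g(r)=\int_R^r\sqrt{\phi(t)}\,\d t$ one gets $g'/\sqrt g\ge\sqrt{(1-\e)\sqrt{2-\e}}$, hence $g(r)\gtrsim r^2$ and $\phi(r)\ge\frac{(1-\e)^2(2-\e)}4 r^2\,(1-o(1))$; letting $\e\to0$ gives $\phi(r)\ge(\half-\e')r^2$ for $r$ large, which is \eqref{decay}. I expect the two genuinely delicate points to be the compactness of the Palais--Smale sequence in the non-coercive regime $V(r)\to-\infty$ and the self-referential comparison yielding the sharp constant $\half$.
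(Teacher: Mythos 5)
Your existence argument has a genuine gap at its foundation. Under (V1) the potential may behave like $V(r)\approx-\sigma'\log r$ with $0<\sigma'<N-1$, and then $u\mapsto\int_{\R^N}Vu^2$ is not even finite on $H^1_r(\R^N)$: for a radial $u$ with $u^2\sim r^{-N}(\log r)^{-3/2}$ at infinity one has $u\in H^1_r(\R^N)$ but $\int_{\R^N} V^-u^2=+\infty$, and this divergence lives exactly on the set $\{u^2<1\}$ whose logarithmic contribution you have removed into the convex part. Consequently the ``remainder'' in your Szulkin-type splitting, which still contains $\half\int(V+\delta a+1)u^2$, is neither finite nor $C^1$ on $H^1_r(\R^N)$, and the indefinite term cannot be shifted into the convex lower semicontinuous part either (also, as cut at $u^2=1$ the removed part is not convex; convexity of $-s^2\log s^2$ holds only for $s^2\le e^{-3}$). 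The pointwise pairing of $Vu^2$ with $-u^2\log u^2$ that makes the total energy well defined in $(-\infty,+\infty]$ is itself neither smooth nor convex, so no (nonsmooth) mountain-pass theorem applies in the framework you set up, and your alternative ``penalize and pass to the limit'' is precisely where the difficulty sits and is not developed. This is the role of the paper's weighted space $E_r$ with weight $(V-\alpha_0\log|x|)^+$: by Lemma \ref{Lem21} and the radial estimate, $I_\delta$ is genuinely $C^1$ there, $E$ embeds compactly into $L^p$, and existence follows from Nehari-manifold minimization (Proposition \ref{lemma2.1}), which also dissolves the compactness issue you flag as delicate. You also never address $u(0)>0$: since $V$ is only $L^q$ near the origin and $s\log s^2$ is non-Lipschitz at $0$, the strong maximum principle (V\'azquez) gives positivity only on $\R^N\setminus\{0\}$, and the paper needs the separate integral-equation argument of Proposition \ref{LemA1}(i) to exclude $u(0)=0$.

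Your decay proof is a genuinely different route from the paper's, which compares a weighted version of $u$ directly with the Gaussian $Ce^{-\tau_1 r^2}$, $\tau_1<\half$, using the monotonicity of $s\log s$ for small $s$; your two-step bootstrap does reach the sharp constant $\half$. But the WKB comparison function $\exp\bigl(-(1-\e)\int_R^r\sqrt{c(t)}\,\d t\bigr)$ is a supersolution of $-\Delta w+cw=0$ only if the term $c'/\sqrt{c}$ is under control, and (V1) gives no bound whatsoever on $V'$ (e.g.\ $V(r)=\log r+\cos(e^r)$ is admissible), so the step ``legitimate because $(1-\e)^2<1$ and $c$ is eventually positive'' fails as stated. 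You should run the comparison with an increasing minorant of $c$ (say $\sigma\log r-C$ in the first step, and $(2-\e)\phi$ in the second, after showing $u'<0$ for large $r$ as in Lemma \ref{Cor2.1} so that $\phi$ is eventually increasing). With those repairs the decay part goes through; the existence part, however, needs the weighted-space device (or a fully executed approximation scheme) before the theorem is proved.
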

Assumption (V1) implies that repulsion would happen in the potential.
In fact, as $r\to+\infty$, $V(r)$ can approach the minus infinity with a logarimic strength.
To overcome the noncompactness caused by this repulsion,
 we will make use of the singular nature of logarithms  and 
the radial lemma of Strauss \cite{st} that for $u\in H^1_{r}(\R^N)$, 
\begin{equation}\label{decay1}
	u(r)=O(r^\frac{1-N}2)\quad \text{as}\quad r\to\infty.
\end{equation}
Here $H_r^1(\R^N)$ denotes the space of radial functions in $H^1(\R^N)$.
Then a positive solution can obtained to \eqref{P1delta} as a minimizer on the Nehari manifold in a suitable subspace of $H_r^1(\R^N)$.

To achieve uniqueness of this positive radial solutions in $H^1_r(\R^N)$,
it is nature to investigate every solution
satisfying the decay property \eqref{decay1}.  In fact, we will show the uniqueness under a weaker decay condition.
   To state the   problem, we  define
 $$\Theta:=\Set{\theta\in(1-N,+\infty)|\liminf_{r\to\infty}(V(r)-\theta \log r)>-\infty}.$$ 
 Note that under the assumption (V1), $\Theta$ is a nonempty subset of $\left(1-N,\liminf_{r\to+\infty} \frac{V(r)}{\log r} \right]$.
 For each $\theta\in \Theta$, we consider uniqueness of solutions to the following problem
\begin{equation} \label{P3}
	\left\{
		\begin{aligned}
			 &u''+\frac{N-1}{r}u'- (V+\delta a) u  +(1+\delta b) u\log u^2=0,\quad u>0\quad \mbox{in}\quad (0,+\infty),\\
			 &u'(0)=0,  \lim_{r\to\infty}r^{-\frac\theta2 } u(r)= 0.
		\end{aligned}
		\right.
	\end{equation}
By the singularity of the potential and nondecay assumption of $u$ in \eqref{P3} when $\theta>0$, we have to 
determine firstly the positivity of $u(0)$   and  a fast decay estimate for each solution $u$ of \eqref{P3}.
These will be done in Proposition \ref{LemA1}. In particular, every solution to \eqref{P3} is in $H_r^1(\R^N)$ and satisfies \eqref{P1delta}
by Proposition \ref{LemA1}.
 To achieve uniqueness, we  need a further assumption. Set
\[
	G(r) =  V(r)+\frac{(N-1)(N-3)}{4r^2}+(N-1)\log r. 
\]
and assume
 \begin{enumerate}
	\item[(V2)]  
       When $N=2$ or $3$, 
	    $G' >0$ in $(0,+\infty)$ and $\liminf_{r\to0^+} G'(r)>0$.
		When $N\geq 4$, $$\limsup_{r\to0^+}r^3 G'(r)<0
		$$
		and
		 $r^3G'(r)$ has a unique simple zero in $(0,+\infty)$.
\end{enumerate}
We have
\begin{Thm}\label{Thm1.2}
	Assume (Vab), (V1) and (V2). Then there is $\delta_0>0$   such that for every $\delta\in[0,\delta_0]$, problem \eqref{P3} admits a unique solution.
	 Especially, the positive radial solution to problem \eqref{P1delta} 	is unique in $H^1_r(\R^N)$.
\end{Thm}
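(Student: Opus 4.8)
The plan is to reduce \eqref{P3} to a scalar problem on the half-line via the Emden--Fowler substitution $v(r)=r^{(N-1)/2}u(r)$, which removes the first-order term in the radial Laplacian and turns \eqref{P3} into
\begin{equation}\label{EF}
 v''-G_\delta(r)\,v+(1+\delta b)\,v\log v^2=0\quad\text{in}\quad(0,+\infty),\qquad
 G_\delta:=G+\delta\bigl(a+(N-1)\,b\log r\bigr).
\end{equation}
Since $a$ and $b$ have compact support, $G_\delta$ coincides with $G$ outside a fixed ball and differs from it by $\delta\bigl(a+(N-1)b\log r\bigr)$; in particular $\limsup_{r\to0^+}r^3G_\delta'=\limsup_{r\to0^+}r^3G'$ when $N\ge4$, so after shrinking $\delta_0$ one may assume $G_\delta$ still obeys (V2) for every $\delta\in[0,\delta_0]$ (for $N\ge4$, persistence of the unique simple zero of $r^3G'$ follows from the implicit function theorem, using that $G_\delta=G$ near $\infty$). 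By Proposition \ref{LemA1}, every solution $u$ of \eqref{P3} has $u(0)>0$, lies in $H^1_r(\R^N)$, and decays faster than any Gaussian; equivalently, the corresponding $v$ has the \emph{regular} behaviour $v(r)\sim u(0)\,r^{(N-1)/2}$ as $r\to0^+$ (rather than the singular mode $\sim r^{(3-N)/2}$) and decays rapidly at infinity. Thus solving \eqref{P3} amounts to finding the rapidly decaying positive solutions of \eqref{EF} in the regular class, of which Theorem \ref{Thm1} supplies at least one for each $\delta\in[0,\delta_0]$.

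I would then analyse \eqref{EF} by shooting inside the regular class. For $\beta>0$ let $v_\beta$ be the solution of \eqref{EF} with $v_\beta(r)\sim\beta\,r^{(N-1)/2}$ as $r\to0^+$, considered on its maximal interval of positivity, and set $\phi_\beta:=\partial_\beta v_\beta$, which solves the linearised equation
\begin{equation}\label{lin}
 \phi''-G_\delta\,\phi+(1+\delta b)\bigl(\log v_\beta^2+2\bigr)\phi=0,\qquad \phi_\beta(r)\sim r^{(N-1)/2}\ \text{as}\ r\to0^+.
\end{equation}
The decisive step — the one I expect to be the main obstacle — is to prove, following the techniques of Coffman, McLeod--Serrin, Serrin--Tang, Kabeya--Tanaka and Byeon--Oshita, that under (V2) the solution $\phi_\beta$ changes sign at most once on its interval of definition. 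This is exactly where (V2) is used: the monotonicity it encodes (for $N=2,3$ through $G_\delta'>0$ with $\liminf_{r\to0^+}G_\delta'>0$; for $N\ge4$ through the sign of $r^3G_\delta'$ and its single simple zero) makes a suitable energy/Wronskian-type functional built from $v_\beta$ and $\phi_\beta$ monotone in $r$, whence a Sturm-type comparison controls the nodal count. Two points need care: the $r^{-2}$ coefficient in \eqref{EF}, whose effect near the origin is precisely what the $N\ge4$ form of (V2) is tailored to handle, and the nonlinearity $v\log v^2$, which is only H\"older continuous and sign-changing at $v=0$, so that the asymptotics of $v_\beta$ and $\phi_\beta$ as $r\to0^+$ and, on the decaying branch, as $r\to\infty$ must be pinned down carefully — the rapid decay from Proposition \ref{LemA1} providing what is needed at infinity.

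Granting this nodal property, uniqueness and nondegeneracy follow by the usual continuity argument of Kwong/Kabeya--Tanaka type: tracking how $v_\beta$ passes, as $\beta$ varies, between ``eventually vanishing'' and ``remaining positive without decaying'', the control on the zeros of $\phi_\beta$ forces the set of ``good'' $\beta$ — those for which $v_\beta$ is a rapidly decaying solution — to reduce to a single point $\beta^*$ (nonempty by Theorem \ref{Thm1}), at which moreover $\phi_{\beta^*}$ does not decay. Equivalently, one may organise the same information as a continuation in $\delta$ in the spirit of Byeon--Oshita and Kabeya--Tanaka — openness via the implicit function theorem and nondegeneracy, closedness via an a priori compactness bound using \eqref{decay1} and the local integrability in (V1). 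Either way, \eqref{P3} has a unique solution for each $\delta\in[0,\delta_0]$. For nondegeneracy, let $w\in H^1_r(\R^N)$ lie in the kernel of the linearisation of \eqref{P1delta} at the unique solution $u$; by elliptic regularity (using $V\in L^q_{loc}$ with $q>N$ and $u(0)>0$) $w$ is bounded near $0$, so $\psi:=r^{(N-1)/2}w$ is a rapidly decaying solution of \eqref{lin} in the regular class, hence a multiple of $\phi_{\beta^*}$; since $\phi_{\beta^*}$ does not decay, $w\equiv0$. Finally, any positive radial solution of \eqref{P1delta} in $H^1_r(\R^N)$ satisfies $u'(0)=0$ and, by \eqref{decay1} (or Proposition \ref{LemA1}), the decay condition in \eqref{P3}, so it coincides with the unique solution just found; this gives the last assertion of the theorem.
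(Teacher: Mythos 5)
Your outline stays at the level of a plan precisely where the theorem's content lies, and it also rests on a step that is genuinely problematic for this equation. First, the ``decisive step'' — that the derivative $\phi_\beta=\partial_\beta v_\beta$ of the shooting solution changes sign at most once under (V2) — is only asserted, with a pointer to the Coffman/McLeod--Serrin/Serrin--Tang/Kabeya--Tanaka circle of ideas; no monotone functional is actually constructed and no Sturm comparison is carried out, so the proof is not there. Second, and more fundamentally, the object $\phi_\beta$ itself is in doubt: the nonlinearity $s\mapsto s\log s^2$ is not Lipschitz (let alone $C^1$) at $s=0$, the potential is only $L^q_{loc}$ at the origin, and on the decaying branch $v_\beta\to0$ so the coefficient $\log v_\beta^2+2$ in your linearised equation \eqref{lin} is unbounded. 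Standard smooth-dependence theorems therefore do not give differentiability of $v_\beta$ in $\beta$ on the relevant intervals; the paper itself only establishes \emph{uniqueness} of the initial value problem (Remark \ref{Rek2.1}) and uses \emph{continuous} dependence on $\beta$ (Lemmas \ref{lemA4}--\ref{prop4.1}), never $\partial_\beta$. The same smoothness issue is the reason the paper's nondegeneracy argument in Section \ref{sec4.1} is variational rather than ODE-based, so your concluding nondegeneracy step inherits the gap as well.

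For contrast, the paper's proof avoids the linearised shooting flow entirely: with $v=K_\delta^{-1/4}r^{(N-1)/2}u$ it introduces the energy $E_\delta(r;u)=\frac12 K_\delta(v')^2-\frac12 G_\delta v^2+\frac12(v^2\log v^2-v^2)$, whose derivative is $-\frac12 G_\delta' v^2$; assumption (V2) then yields $E_\delta(r;u)>0$ for all $r>0$ with $E_\delta\to0$ at infinity (Lemma \ref{lem3.1}). Given two solutions $u_1,u_2$ with $u_1(0)<u_2(0)$, a shooting argument using only continuity in $\beta$ (Lemma \ref{prop4.1}) reduces to the case of a single intersection, Lemma \ref{lem4.2} gives monotonicity of $u_1/u_2$, and then the quantity $(v_2/v_1)^2E_1-E_2$ is shown to be strictly decreasing while tending to $0$ at both ends — a contradiction. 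If you want to salvage your route, you would need to (i) prove differentiability of the flow in $\beta$ despite the singular potential and non-Lipschitz nonlinearity (or work with difference quotients of two solutions, which essentially takes you back to the paper's two-solution comparison), and (ii) actually establish the nodal bound for \eqref{lin} from (V2), which is the heart of the matter.
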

Note that if $\liminf_{r\to+\infty} V(r)>-\infty$, we can take $\theta=0 \in \Theta$ in  problem \eqref{P3}.
Then we obtain a direct corollary of Theorem \ref{Thm1.2}.
\begin{Cor}\label{cor1.1}
	Assume (Vab), (V1), (V2) and $\liminf_{r\to+\infty} V(r)>-\infty$. Then there is $\delta_0>0$ such that for $\delta\in[0,\delta_0]$, the following problem has a unique  solution:
	\[ 
		\left\{
			\begin{aligned}
				 &u''+\frac{N-1}{r}u'- (V+\delta a) u  +(1+\delta b) u\log u^2=0,\quad u>0\quad \mbox{in}\quad (0,+\infty),\\
				 &u'(0)=0,  \lim_{r\to+\infty} u(r)= 0.
			\end{aligned}
			\right.
		\] 
\end{Cor}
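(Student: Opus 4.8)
The plan is to obtain Corollary~\ref{cor1.1} as the special case $\theta=0$ of Theorem~\ref{Thm1.2}, so that the work reduces to verifying that this choice of $\theta$ is admissible and that the two boundary conditions coincide.

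First I would check that $0\in\Theta$. Since $N\ge 2$, we have $1-N\le -1<0$, so $0\in(1-N,+\infty)$; and by the very definition of $\Theta$, the condition $0\in\Theta$ reads $\liminf_{r\to\infty}\bigl(V(r)-0\cdot\log r\bigr)=\liminf_{r\to\infty}V(r)>-\infty$, which is precisely the extra hypothesis assumed in the corollary. (This hypothesis is moreover consistent with (V1): if $\liminf_{r\to\infty}V(r)>-\infty$ then $\liminf_{r\to\infty}V(r)/\log r\ge 0>1-N$, as required there.) Having fixed $\theta=0$, the decay requirement $\lim_{r\to\infty}r^{-\theta/2}u(r)=0$ appearing in problem~\eqref{P3} becomes exactly $\lim_{r\to\infty}u(r)=0$; hence problem~\eqref{P3} with $\theta=0$ is word for word the problem stated in Corollary~\ref{cor1.1}.

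It then remains only to invoke Theorem~\ref{Thm1.2}: it provides $\delta_0>0$ such that for every $\delta\in[0,\delta_0]$ problem~\eqref{P3} (with our $\theta=0$) has a unique solution, which is the assertion of the corollary. I do not expect any substantive obstacle in this deduction: all the analytic content — the a priori positivity and fast-decay estimates of Proposition~\ref{LemA1}, the membership of solutions in $H^1_r(\R^N)$, and the uniqueness argument itself — is already packaged inside Theorem~\ref{Thm1.2}, and the corollary is just the observation (also recorded in the text immediately before its statement) that when $V$ is bounded below near infinity the admissible set $\Theta$ contains $0$.
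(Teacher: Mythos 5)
Your proposal is correct and coincides with the paper's own deduction: the text immediately preceding Corollary \ref{cor1.1} makes exactly this observation, namely that $\liminf_{r\to+\infty}V(r)>-\infty$ gives $0\in\Theta$, so that problem \eqref{P3} with $\theta=0$ is the stated problem and Theorem \ref{Thm1.2} applies verbatim. No gap.
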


\begin{Rek}
We give some more comments on Theorem \ref{Thm1.2} and Corollary \ref{cor1.1}.
	\begin{enumerate}
		\item[\rm (i)] For $\delta=0$,   the uniqueness of radial solutions to \eqref{P3} under (V1) and the following  weaker condition than (V2):

		when $N=2$ or $3$, $G' \geq 0$ in $(0,+\infty)$ ; and when $N \geq 4$, $\lim \sup _{r \to 0^+} r^3G'(r)<0$ and
		    the zero set of $r^3G'(r)$ in $(0, +\infty)$ is a  connected   nonempty set.
		\item[\rm(ii)] 
		 A sufficient condition for (V2) is 
		 the following:
		$V\in C^2((0,+\infty))$ satisfies $V' >  \frac{1-N}r$ for $N\geq 2$,  and  
		     additionally $rV'$ is   increasing   when $N\geq 4$.
		\item[\rm (iii)] We give typical examples of $V$ for  uniqueness of radial solutions to \eqref{P3} with $\delta=0$:
		$\alpha_{1}\log r+\alpha_2 r^{\alpha_3}+\alpha_4$ with  $\alpha_1>1-N$, $\alpha_2, \alpha_3\geq 0$ and $\alpha_4\in\R$.
		\item[\rm (iv)] Corollary \ref{cor1.1}  generalizes the uniqueness results of \cite{dMS} and \cite{Troy}  for  constant potentials and $\delta=0$. 
		Especially,  a problem  raised by Troy in \cite[Problem 1]{Troy} is resolved 
		by combining  Corollary \ref{cor1.1} and the symmetry result of \cite[Proof of Theorem 1.2]{dMS} that is true for $N\geq 2$.
		\item[\rm (v)] There is an  example  that multiple   positive radial solutions exist for repulsive potential:
		$V(r)=-\mu r^2$ for $\mu \in (0,\frac14)$. See \cite{CS} for the explicit expression of these solutions.
	\end{enumerate}

\end{Rek}
Now  we study uniqueness of positive solutions to problem \eqref{P1}.
To this end, we need to show any positive solution of \eqref{P1} is radially symmetric by assuming
\begin{enumerate}
	\item[(V3)] $V\in C((0,+\infty))$ is increasing and nonconstant.
\end{enumerate}
Note that (V3) implies $\liminf_{r\to+\infty} V(r)>-\infty$.
 We have
  \begin{Thm}\label{Thm1.3}
	  Assume (V3) and let $u\in C(\R^N)\cap C^2(\R^N\setminus\{0\})$ be a solution to
	  \begin{equation}
		  \label{P2}\left\{
	  \begin{aligned}
		  &- \Delta u+ V(|x|)	u=u\log u^2, \\
		  & u>0, u\to 0\ \mbox{as}\ \abs x\to \infty. 
	  \end{aligned}
	  \right.
	  \end{equation}
	  Then $u$ is radially symmetric about $0$. Moreover, $u'(r)<0$ for each $r>0$.
  \end{Thm}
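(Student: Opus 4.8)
The plan is to establish radial symmetry via the moving plane method, adapted to handle the singularity of $V$ at the origin and the logarithmic nonlinearity. First I would record the basic a priori information: since $u\in C(\R^N)\cap C^2(\R^N\setminus\{0\})$ is positive with $u\to 0$ at infinity, the term $\log u^2$ is bounded above near infinity and the decay of $u$ together with elliptic estimates gives that $|\nabla u|\to 0$ as $|x|\to\infty$; moreover, testing the equation and using that $V$ is bounded below (a consequence of (V3)), one gets $u\in H^1(\R^N)$ with exponential-type decay (here one can invoke the decay mechanism already used for \eqref{decay}, since $V(|x|)u - u\log u^2 \geq c\, u$ outside a large ball). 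The key structural point is that the nonlinearity $f(x,u) := u\log u^2 - V(|x|)u$ is, for each fixed $u$-value, \emph{nonincreasing} in $|x|$ because $V$ is increasing; this is exactly the monotonicity hypothesis that makes the moving plane method applicable and forces the reflected function to dominate in the right direction.

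The main body of the argument runs the moving plane method in an arbitrary direction, say $e_1$. For $\lambda\in\R$ set $\Sigma_\lambda=\{x_1>\lambda\}$, let $x^\lambda=(2\lambda-x_1,x')$ be the reflection, and put $w_\lambda(x)=u(x^\lambda)-u(x)$ on $\Sigma_\lambda$. The singularity at $0$ means that for $\lambda>0$ the origin lies in $\Sigma_\lambda^c$ but its reflection lies in $\Sigma_\lambda$ only when $\lambda<0$; to avoid $x^\lambda=0$ one only slides planes with $\lambda>0$ and then, at the end, also slides from the left, concluding symmetry about the unique hyperplane through $0$. For $\lambda>0$, $w_\lambda$ satisfies a linear equation $-\Delta w_\lambda = c_\lambda(x) w_\lambda + g_\lambda(x)$ on $\Sigma_\lambda$, where $c_\lambda(x)=\big(\log u(x^\lambda)^2-\log u(x)^2\big)/(u(x^\lambda)-u(x)) + 2 - V(|x|)$ (interpreting the difference quotient of $s\mapsto s\log s^2$ as its derivative when $u(x^\lambda)=u(x)$), and $g_\lambda(x)=\big(V(|x|)-V(|x^\lambda|)\big)u(x^\lambda)\le 0$ since $|x^\lambda|\le|x|$ on $\Sigma_\lambda$ when $\lambda>0$. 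The sign $g_\lambda\le 0$ is what drives the method. One first shows that for $\lambda$ large the set where $w_\lambda<0$ is empty: this uses the decay of $u$ and $\nabla u$ at infinity together with a maximum principle for possibly-unbounded domains — since $w_\lambda\to 0$ at infinity and $w_\lambda=0$ on $\partial\Sigma_\lambda$, a Kelvin-transform or the Berestycki--Nirenberg ``narrow domain / decay'' argument applies even though $c_\lambda$ is not bounded above (one uses that $c_\lambda(x)\to -\infty$ in the region where both $u(x^\lambda)$ and $u(x)$ are small, so the operator $-\Delta - c_\lambda^+$ has positive supersolutions there). Then define $\lambda_0=\inf\{\lambda>0 : w_\mu\ge 0 \text{ in }\Sigma_\mu \ \forall \mu\ge\lambda\}$ and show $\lambda_0=0$: if $\lambda_0>0$, strong maximum principle (applied on $\Sigma_{\lambda_0}\setminus\{x^{\lambda_0}=0\}$, and noting $g_{\lambda_0}\not\equiv 0$ unless $V$ is constant on the relevant sphere, which (V3) excludes) gives $w_{\lambda_0}>0$ in the interior; combined with the Hopf lemma on $\partial\Sigma_{\lambda_0}$ and a compactness/continuity argument one pushes $\lambda$ slightly below $\lambda_0$, a contradiction. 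Hence $w_0\ge 0$, i.e. $u(-x_1,x')\ge u(x_1,x')$ for $x_1>0$; repeating from the left gives the reverse inequality, so $u$ is symmetric in $x_1$. Since $e_1$ was arbitrary, $u$ is radial about $0$. Finally, $w_\lambda>0$ in the interior of $\Sigma_\lambda$ for every $\lambda>0$ (again because $g_\lambda\not\equiv 0$ by (V3) nonconstant) translates, after differentiating in $\lambda$ or by Hopf's lemma at the plane $\{x_1=\lambda\}$, into $\partial_{x_1}u<0$ on $\{x_1=\lambda>0\}$; by rotational invariance this is exactly $u'(r)<0$ for all $r>0$.

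The main obstacle I anticipate is the interplay between the singularity of $V$ at the origin and the moving plane procedure: one cannot slide planes through $0$, so the symmetry conclusion must be bootstrapped from one-sided sliding plus the freedom of direction, and at the critical stage $\lambda_0$ one must carefully exclude that the reflected singular point $x^{\lambda_0}$ (when $\lambda_0=0$) obstructs the strong maximum principle — this is handled because at $\lambda_0=0$ the conclusion $w_0\ge0$ is all that is needed, while for the strict monotonicity statement one only uses $\lambda>0$, where $x^\lambda\ne 0$. The second delicate point is the maximum principle on the unbounded domain $\Sigma_\lambda$ with a coefficient $c_\lambda$ unbounded above near the plane; the remedy is the standard device of writing $w_\lambda=\varphi\, \psi$ with $\varphi$ a carefully chosen positive function (e.g. built from the fundamental solution or from $u(x)+u(x^\lambda)$ itself, exploiting that $s\mapsto s\log s^2$ has the right convexity at the relevant scales) so that $\psi$ satisfies an equation with a coefficient bounded above, after which the classical Berestycki--Nirenberg sliding argument goes through verbatim; the decay \eqref{decay}-type estimate guarantees $\psi\to 0$ at infinity so no boundary term at infinity appears.
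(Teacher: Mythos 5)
Your overall strategy is the paper's: moving planes in an arbitrary direction, using the monotonicity of $V$ to give the potential-difference term a favorable sign, the strict decrease of $s\mapsto s\log s^2$ on $(0,e^{-1})$ to start the plane where $u$ is small, and strong maximum principle plus Hopf's lemma at the critical position. However, the step you yourself single out as the main obstacle — the singularity at the origin — is resolved by a geometric claim that is false. With $\Sigma_\lambda=\{x_1>\lambda\}$ and $x^\lambda=(2\lambda-x_1,x')$, for \emph{every} $\lambda>0$ the point $(2\lambda,0,\dots,0)$ lies in $\Sigma_\lambda$ and its reflection is exactly the origin; so the reflected function $u(x^\lambda)$ is merely continuous (and $V(|x^\lambda|)$ possibly singular) at an interior point of the half-space on which you run the comparison, for every $\lambda>0$, not only at the limiting position. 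Thus ``to avoid $x^\lambda=0$ one only slides planes with $\lambda>0$'' does not avoid anything, and your proposal never actually handles the singular point. The paper's proof handles it concretely: choose $R$ with $u<\min\{e^{-1},u(0)\}$ outside $B_R(0)$, so that at a putative negative minimum outside $B_R(0)$ the reflected point cannot be the origin (there $u_\lambda-u=u(0)-u>0$), and in the later steps apply the strong maximum principle and Hopf lemma only after rewriting the equation as in \eqref{w}, where the non-Lipschitz part of $s\log s^2$ is isolated as a nonnegative right-hand side and the zeroth-order coefficient $V(|x|)-\log U_\lambda^2$ is bounded from below; this rewriting (or some substitute for it) is also missing from your sketch, which only gestures at a ``$w_\lambda=\varphi\psi$'' device.

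There is also a sign error at the heart of the mechanism: you assert $g_\lambda(x)=\bigl(V(|x|)-V(|x^\lambda|)\bigr)u(x^\lambda)\le 0$ and that this sign drives the method. Since $V$ is increasing and $|x^\lambda|\le|x|$ on $\Sigma_\lambda$ for $\lambda>0$, in fact $g_\lambda\ge 0$, and it is precisely this nonnegativity that lets the maximum principle yield $w_\lambda\ge 0$; with $g_\lambda\le 0$ the comparison would run the wrong way. Finally, some preliminary claims are unjustified or unnecessary: (V3) does not give a lower bound for $V$ near $0$, and $u\in H^1(\R^N)$, $|\nabla u|\to 0$, or exponential decay are not needed — the paper's argument uses only $u>0$, $u\to 0$ at infinity, and continuity, which is why it works under the stated regularity $u\in C(\R^N)\cap C^2(\R^N\setminus\{0\})$.
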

  A direct corollary of Theorem \ref{Thm1.3} and Corollary \ref{cor1.1}   is
  the uniqueness of positive solutions to problem \eqref{P1}.
  \begin{Cor}
	Assume (V1), (V2) and (V3). Then problem \eqref{P1} $($or \eqref{P2}$)$ has a unique positive solution.
  \end{Cor}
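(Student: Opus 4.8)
The plan is to assemble the statement from Theorem~\ref{Thm1.3} and Corollary~\ref{cor1.1}, as announced. First I would observe that hypothesis (Vab) is always satisfiable: take $a\equiv 0$ and let $b$ be a smooth radial cut-off with $b\equiv 1$ near $0$ and compact support. Since (V3) forces $\liminf_{r\to\infty}V(r)>-\infty$, Corollary~\ref{cor1.1} applies with $\delta=0$ and produces a solution $u_0$ of the radial problem with $u_0>0$, $u_0'(0)=0$ and $u_0(r)\to 0$ as $r\to\infty$; by Proposition~\ref{LemA1} this $u_0$ lies in $H^1_r(\R^N)$ and solves \eqref{P1}. Equivalently, Theorem~\ref{Thm1} with $\delta=0$ gives a positive radial $u_0\in C^{2-\gamma}(\R^N)\cap C^2(\R^N\setminus\{0\})$ solving \eqref{P1}, and \eqref{decay} forces $u_0(x)\to 0$. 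In either description $u_0$ solves \eqref{P2}.

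\textbf{From $H^1$ to \eqref{P2}.} Next let $u\in H^1(\R^N)$ be an arbitrary positive solution of \eqref{P1}. The first task is to upgrade $u$ to a solution of \eqref{P2}. Since $t\mapsto t\log t^2$ has subcritical growth — for every $\e>0$ one has $|t\log t^2|\le C_\e(|t|^{1-\e}+|t|^{1+\e})$ — a Brezis--Kato type argument together with Moser iteration, using $V\in L^q_{loc}(\R^N)$ with $q>N$ from (V1), gives $u\in L^\infty_{loc}(\R^N)$; on $\R^N\setminus\{0\}$, where $V\in C^2$, a standard bootstrap then gives $u\in C^2(\R^N\setminus\{0\})$, and near $0$ the exponent $q>N$ gives $u\in C^{0,\gamma}(\R^N)$ with $\gamma=N/q$; finally, the vanishing of $\|u\|_{H^1(B_1(x))}$ as $|x|\to\infty$ together with interior estimates (the continuous $V$ being locally bounded on $(0,\infty)$) forces $u(x)\to 0$. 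Hence $u$ solves \eqref{P2}. This step is essentially the regularity part of the proof of Theorem~\ref{Thm1}.

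\textbf{Reduction to the ODE and conclusion.} By (V3) and Theorem~\ref{Thm1.3}, $u$ is radially symmetric about $0$ with $u'(r)<0$ for $r>0$; thus, as a function of $r=|x|$, $u$ solves $u''+\frac{N-1}{r}u'-Vu+u\log u^2=0$ on $(0,\infty)$ with $u>0$ and $u(r)\to0$ as $r\to\infty$. To check the remaining boundary condition, integrate the radial equation: $r^{N-1}u'(r)=\int_0^r s^{N-1}(Vu-u\log u^2)\,ds$; since $u$ is bounded near $0$ and $\int_0^r s^{N-1}|V(s)|\,ds=O(r^{N(1-1/q)})$ by Hölder, one gets $u'(r)=O(r^{1-\gamma})\to0$ as $r\to0^+$, so $u'(0)=0$. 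Hence $u$ is precisely a solution of the problem in Corollary~\ref{cor1.1} with $\delta=0$, and that corollary's uniqueness gives $u=u_0$. Since $u$ is an arbitrary positive solution of \eqref{P1} — and, by the same reduction, of \eqref{P2} — problem \eqref{P1} (or \eqref{P2}) has a unique positive solution.

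The only step carrying genuine content is the regularity-and-decay bootstrap of the second paragraph; the delicate points there are that $u\log u^2$ is not locally Lipschitz at $0$ (handled by its subcritical growth) and that $V$ may be singular at $0$ or unbounded at infinity (handled by $V\in L^q_{loc}$ with $q>N$ and by local boundedness of the continuous $V$ away from $0$). Everything else is a direct citation of Theorem~\ref{Thm1} (or Corollary~\ref{cor1.1} together with Proposition~\ref{LemA1}) for existence, Theorem~\ref{Thm1.3} for radial symmetry, and Corollary~\ref{cor1.1} for uniqueness of the radial solution.
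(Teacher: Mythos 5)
Your proof is correct and follows essentially the same route as the paper, which obtains this corollary directly by combining Theorem \ref{Thm1} (existence), Theorem \ref{Thm1.3} (radial symmetry of solutions to \eqref{P2}) and Corollary \ref{cor1.1} (uniqueness of the radial solution, available since (V3) forces $\liminf_{r\to\infty}V>-\infty$). The only difference is that you spell out the regularity/decay bootstrap and the verification of $u'(0)=0$ needed to place an arbitrary positive $H^1$ solution of \eqref{P1} under the hypotheses of Theorem \ref{Thm1.3} and Corollary \ref{cor1.1}, details the paper leaves implicit.
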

  We also study the nondegeneracy of the unique radial solution. We say a positive radial solution
  $w$
  to \eqref{P1} is {\it nondegenerate} in $H^1_r(\R^N)$ (resp. $H^1(\R^N)$) if 
  the problem 
  \[
	  -\Delta \psi + V(|x|) \psi = (\log w^2 +2)\psi
  \]
  has no nontrivial solution in $H^1_r(\R^N)$ (resp. $H^1(\R^N)$).
  We have 
  \begin{Thm}\label{thm1.4}
	  Under the assumptions (V1) and (V2), the unique positive radial solution to \eqref{P1} is nondegenerate in $H^1_r(\R^N)$. Under the assumptions (V1) (V2) and (V3),
  the unique positive solution to \eqref{P1} is   nondegenerate in $H^1(\R^N)$.
  \end{Thm}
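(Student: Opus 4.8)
The plan is to establish the two assertions separately, deducing the $H^1(\R^N)$ statement from the radial one by a spherical--harmonic decomposition.

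\emph{Radial nondegeneracy.} Write $L_0\psi := -\psi''-\frac{N-1}{r}\psi'+V\psi-(\log w^2+2)\psi$ for the radial linearization of \eqref{P1} at the unique positive radial solution $w$. From the equation one reads off $L_0 w=-2w$, so $w>0$ is the principal eigenfunction of $L_0$, $\mu_1(L_0)=-2$, and any radial kernel element $\psi$ is $L^2$--orthogonal to $w$ and therefore changes sign --- so a bare maximum principle will not suffice and (V2) must be used. I would argue exactly as in the proof of Theorem~\ref{Thm1.2}: apply the substitution $v(r)=r^{(N-1)/2}w(r)$, which turns the equation into the one--dimensional problem $-v''+G(r)v=v\log v^2$ with $G$ as in (V2), and sends $\psi$ to $\phi=r^{(N-1)/2}\psi$ solving $-\phi''+(G-\log v^2-2)\phi=0$. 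Using Proposition~\ref{LemA1} for the behaviour of $v$ and $\phi$ at $r=0$ and at $+\infty$, together with the Wronskian/monotonicity identities $(v\phi'-v'\phi)'=-2v\phi$ and $\tilde L_0 v'=-G'v$ (where $\tilde L_0=-\frac{d^2}{dr^2}+G-\log v^2-2$), the hypothesis (V2) --- monotonicity of $G$ when $N=2,3$, or the prescribed sign and unique simple zero of $r^3G'$ when $N\ge4$ --- forces the number of sign changes of $\phi$ to be simultaneously too large and too small, hence $\phi\equiv0$ and $\psi\equiv0$. I expect this Sturm--Liouville step, together with the control of the singular endpoint $r=0$, to be the main difficulty; it is, however, precisely what already has to be carried out for Theorem~\ref{Thm1.2} (the instance $\delta=0$ suffices here).

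\emph{Nondegeneracy in $H^1(\R^N)$.} Now assume in addition (V3), so that by Theorem~\ref{Thm1.3} the unique positive solution $w$ is radial with $w'<0$ on $(0,+\infty)$, and let $\psi\in H^1(\R^N)$ solve $-\Delta\psi+V(|x|)\psi=(\log w^2+2)\psi$. Expanding $\psi=\sum_{k\ge0}\psi_k(r)Y_k(\sigma)$ in spherical harmonics with $-\Delta_{S^{N-1}}Y_k=\lambda_k Y_k$, $\lambda_k=k(k+N-2)$, each radial profile solves $L_k\psi_k=0$ where $L_k=L_0+\lambda_k r^{-2}$. For $k=0$ the first part gives $\psi_0\equiv0$. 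For $k=1$, differentiating \eqref{P1} shows that $\partial_{x_1}w=w'(r)\,x_1/r$ lies in the $k=1$ sector and satisfies $L_1(w')=-V'(r)w$; since $V\in C^2$ is increasing and nonconstant, $V'\ge0$ with $V'\not\equiv0$, so $-w'>0$ is a nontrivial positive supersolution of $L_1$ on $(0,+\infty)$. A Barta/Allegretto--Piepenbrink argument then yields $\mu_1(L_1)>0$, so $L_1$ is positive definite and $\psi_1\equiv0$. Finally, for $k\ge2$ one has $L_k=L_1+(\lambda_k-\lambda_1)r^{-2}$ with $\lambda_k>\lambda_1$, hence $\mu_1(L_k)\ge\mu_1(L_1)>0$ and $\psi_k\equiv0$. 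Summing over $k$ gives $\psi\equiv0$. The only points needing care here are the decay and endpoint behaviour of $-w'$ (furnished by Theorem~\ref{Thm1} and Proposition~\ref{LemA1}) and the validity of the positive--supersolution criterion for the singular operator $L_1$ on $(0,+\infty)$, both of which follow from the estimates already in place.
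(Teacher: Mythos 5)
Your radial-nondegeneracy step contains a genuine gap. You assert that the substitution $\phi=r^{(N-1)/2}\psi$, the identities $(v\phi'-v'\phi)'=-2v\phi$ and $\tilde L_0 v'=-G'v$, and hypothesis (V2) ``force the number of sign changes of $\phi$ to be simultaneously too large and too small,'' and you claim this is ``precisely what already has to be carried out for Theorem \ref{Thm1.2}.'' Neither claim holds as stated: the proof of Theorem \ref{Thm1.2} compares two \emph{positive} solutions of the nonlinear ODE through the energy $E_\delta$ and the monotonicity of their ratio; it never analyzes sign-changing solutions of the linearized equation, and uniqueness does not formally imply nondegeneracy. The paper bridges exactly this gap by a different device, which is the reason the perturbation $\delta a$, $\delta b$ appears in \eqref{P1delta} at all: given $\psi\in E_r\setminus\{0\}$ in the kernel, one chooses $b$ with $\int b\psi^2>0$, sets $a=b\log w^2$ so that $w$ still solves (and, by Theorem \ref{Thm1.2}, uniquely minimizes on the Nehari manifold for) the perturbed problem, recovers enough smoothness via the truncations $\psi_n=\eta(n^{-1}\cdot)\psi$ and the positivity of $w+s\psi_n$, and computes $\frac{\d^2}{\d s^2}\big|_{s=0}\left(e^{t_n(s)}Q_n(s)\right)\to-4\delta\left(\int b\psi^2-\delta(\int bw\psi)^2/\int B_\delta w^2\right)<0$, contradicting minimality at $s=0$ (Proposition \ref{lemma2.1}). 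Your ODE route might conceivably be completed, but as written the decisive mechanism is missing; moreover you invoke Proposition \ref{LemA1} for the endpoint behaviour of $\phi$, while that proposition concerns positive solutions of the nonlinear problem and gives no decay or $r\to0$ control for a kernel element $\psi$, which you would need to kill the Wronskian boundary terms at the singular endpoint $r=0$ and at infinity.

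Your $H^1$ step is a legitimate alternative to the paper's argument (the paper instead shows radial symmetry of $\psi$ by testing the odd part $\psi(x_1,x')-\psi(-x_1,x')$ against $\partial_{x_1}w$ on level sets, using Sard's theorem, the Hopf lemma and $V'\geq0$, $V'\not\equiv0$), and it rests on the same facts $w'<0$ and $L_1(-w')=V'w\geq0$. Two points need repair, though. First, Barta's inequality only gives $\mu_1(L_1)\geq 0$, since $\inf V'w/(-w')$ may well be $0$; to exclude a kernel element in the $k=1$ mode you must treat the case $\mu_1(L_1)=0$ separately (a signed principal eigenfunction paired against $-w'$ yields $\int \psi_1 V' w\, r^{N-1}\,\d r=0$, contradicting $V'\not\equiv0$), and similarly reduce $k\geq2$ to $k=1$ using the extra positive term $(\lambda_k-\lambda_1)r^{-2}$ rather than claiming $\mu_1(L_k)\geq\mu_1(L_1)>0$ outright. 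Second, the integrations by parts against $-w'$ require justification at $r=0$ (where $V$ and $V'$ may be singular and $w$ is only $C^{2-\gamma}$) and at infinity; this is not automatic from ``estimates already in place'' and is precisely what the paper's level-set/Sard argument is designed to circumvent.
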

  For an equation with a $C^2$-smooth variational structure,
  the nondegeneracy of a solution can be  characterized   
 as  the nondegenerate Hessian  at the corresponding
 critical point of the variational functional. 
 This require a $C^2$-smooth variational functional associated to the problem.
 However, by the non-lipschitzian property of the logarithmic nonlinearity,
 there is no $C^2$-smooth variational structure for problem \eqref{P1}.
 We find a method in Section \ref{sec4.1} to recover smoothness so that we can 
apply the idea of \cite{BO,KT} to  show Theorem \ref{thm1.4}.

As we have discussed, problem  \eqref{eq1.11'} was first studied in \cite{ITWZ} as a  
limit equation for some semiclassical logarithmic Schr\"odinger equation.
In \cite{ITWZ}, it was also pointed that the semiclassical state behave like
 what had been studied in \cite{BW1,BW2} on the semiclassical power 
 nonlinear Schrödinger equations with a critical frequency.
So it seems that there should be a connection between the two type of semiclassical limit equations.
We will reveal  this connection by studying the limit profile of the following equation
 \begin{equation}\label{eq1.11}
	-\Delta u+|x|^{ {\alpha \sigma }} u=|u|^{2\sigma}u,  \quad u\in H^1(\R^N),
 \end{equation}
 where $\alpha>1-N$ and $\sigma\in(0,{2}/{(N-2)^+})$.
Here we denote that, throughout this paper,  
$d^+=\max\set{d,0}$ and $d^-=-\min\set{d,0}$ for $d\in \R$. Moreover, we take $1/0^+=+\infty$.
By \cite{BO},  we see that \eqref{eq1.11} has exactly one positive solution which is radially symmetric if $\alpha\geq 0$ and $\sigma\in (0,\frac{2}{(N-2)^+})$.
We will see that \eqref{eq1.11} has a unique positive radial solution 
even when $1-N<\alpha<0$  under some additional conditions.
Above all, for every $\alpha>1-N$, we can  give  the convergence result of the unique positive radial solution to \eqref{eq1.11} as $\sigma\to0^+$.
In fact, we have
\begin{Thm}\label{Thm1.5}
	Assume that $\alpha>1-N$ and $\sigma\in(0,{2}/{(N-2)^+})$. The following statements hold.
	 \begin{enumerate}
		 \item[\rm (i)]  If \begin{equation}\label{eq1.13}
			-\sigma\alpha<2\min\Set{1, N-1+\alpha },
		 \end{equation} 
		 then  \eqref{eq1.11} has a  nonnegative nontrivial   solution  $u_\sigma\in H_r^1(\R^N)$ satisfying $u_\sigma>0$ in $\R^N\setminus\{0\}$ and
		 \[
			u_\sigma(x)\leq C_\sigma \exp\big( -c_\sigma|x|^{\frac{\alpha\sigma+2}2} \big),\quad x\in \R^N,
	   \]
		where $C_\sigma, c_\sigma$ are positive constants independent of $x$. 
		
			\item[\rm (ii)] Assume $-\sigma\alpha<1$ in addition to \eqref{eq1.13}. Then   $u_\sigma\in C^{2-\gamma'}(\R^N)\cap C^2(\R^N\setminus\{0\})$ for 
			each $\gamma'\in(-\alpha\sigma,1)$ and $u_\sigma>0$ solves \eqref{eq1.11}
			 uniquely in $H_r^1(\R^N)$. 
			 \item[\rm (iii)]
			  For any $\gamma''\in(0,1)$, as $\sigma\to0^+$,
			 $\sigma^{\frac{\alpha}{4 }}u_\sigma( {\sigma}^{-\frac{1}{2 }}\cdot)$ converges to the unique positive radial solution of \eqref{eq1.11'}, in $H_r^1(\R^N)$ and $C^{2-\gamma''}(\R^N)\setminus C^2(\R^N\setminus B_{\gamma''}(0))$ . 
	 \end{enumerate} 
\end{Thm}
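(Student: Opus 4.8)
I treat the three parts in order; (iii) is the substantive statement, and (i)--(ii) supply the tools for it.

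For (i), the plan is to work variationally in the weighted radial space $E_\sigma:=\{u\in H^1_r(\R^N):\int_{\R^N}|x|^{\alpha\sigma}u^2<\infty\}$ with $\|u\|^2_{E_\sigma}=\int_{\R^N}(|\nabla u|^2+|x|^{\alpha\sigma}u^2)$. Condition \eqref{eq1.13} is exactly what makes this norm finite on radial $H^1$-functions (for $\alpha<0$, near the origin via Hardy's inequality, using $-\sigma\alpha<2$) and makes the embedding $E_\sigma\hookrightarrow L^{2\sigma+2}(\R^N)$ continuous and compact --- for $\alpha\ge0$ from the growth of the weight at infinity, and for $\alpha<0$ by combining the radial (Strauss) compactness $H^1_r\hookrightarrow L^{2\sigma+2}$, valid since $0<\sigma<2/(N-2)^+$, with the bound $-\sigma\alpha<2(N-1+\alpha)$ on the singular weight. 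A nonnegative nontrivial $u_\sigma$ is then produced as a minimizer of $J_\sigma(u)=\tfrac12\|u\|^2_{E_\sigma}-\frac{1}{2\sigma+2}\int_{\R^N}|u|^{2\sigma+2}$ on the Nehari manifold (replacing $u_\sigma$ by $|u_\sigma|$); interior elliptic regularity and the strong maximum principle give $u_\sigma>0$ in $\R^N\setminus\{0\}$ and $u_\sigma\to0$ at infinity. For the decay bound: since $u_\sigma\to0$ at infinity and, by the radial lemma \eqref{decay1} of \cite{st}, $u_\sigma(r)=O(r^{(1-N)/2})$, the hypothesis $\alpha>1-N$ forces $u_\sigma^{2\sigma}\le\tfrac12|x|^{\alpha\sigma}$ for $|x|\ge R$, hence $-\Delta u_\sigma+\tfrac12|x|^{\alpha\sigma}u_\sigma\le0$ there; comparing with the supersolution $M\exp(-c|x|^{(\alpha\sigma+2)/2})$ (whose construction needs $\alpha\sigma+2>0$ and $c$ small) via the comparison principle yields the claimed estimate.

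For (ii), the extra assumption $-\sigma\alpha<1$ is exactly the condition that $V(|x|)=|x|^{\alpha\sigma}$ lie in $L^q_{loc}(\R^N)$ for some $q>N$; the regularity $u_\sigma\in C^{2-\gamma'}(\R^N)\cap C^2(\R^N\setminus\{0\})$ with $\gamma'=N/q\in(-\alpha\sigma,1)$ then follows exactly as in Theorem \ref{Thm1} from standard elliptic $L^p$ and Schauder estimates (full $C^2$ away from the origin, where the weight is smooth). Uniqueness of the positive radial solution in $H^1_r$ I would obtain from the ODE/shooting uniqueness theory for radial power problems in the spirit of Byeon--Oshita \cite{BO} and Kabeya--Tanaka \cite{KT}: for $\alpha\ge0$ this is \cite{BO}; for $1-N<\alpha<0$ one performs the substitution $u=r^{(1-N)/2}v$, which replaces the equation by $-v''+\bigl(\tfrac{(N-1)(N-3)}{4r^2}+r^{\alpha\sigma}\bigr)v=r^{-(N-1)\sigma}v^{2\sigma+1}$, and checks that the resulting effective potential and weight meet the monotonicity requirements of that theory, the $C^{2-\gamma'}$-regularity supplying the control of $u_\sigma$ and $u_\sigma'$ at the origin that the comparison/continuation argument needs.

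For (iii), I would set $v_\sigma(y):=\sigma^{\alpha/4}u_\sigma(\sigma^{-1/2}y)$; a direct computation shows it solves
\[
	-\Delta v_\sigma=\frac{\sigma^{-\alpha\sigma/2}}{\sigma}\bigl(|v_\sigma|^{2\sigma}-|y|^{\alpha\sigma}\bigr)v_\sigma\quad\text{in }\R^N,
\]
the exponents $\alpha/4$ and $-\tfrac12$ being chosen so that the two large prefactors $\sigma^{-\alpha\sigma/2}/\sigma$ in front of the nonlinearity and of the weight coincide and the quotient $\sigma^{-1}(|v_\sigma|^{2\sigma}-|y|^{\alpha\sigma})=\sigma^{-1}(e^{2\sigma\log|v_\sigma|}-e^{\alpha\sigma\log|y|})$ tends to $\log v^2-\alpha\log|y|$; with $\sigma^{-\alpha\sigma/2}\to1$ the formal limit equation is then \eqref{eq1.11'}. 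To make this rigorous the plan is: (a) expand the rescaled energy $\widetilde J_\sigma$ in $\sigma$ to get $\widetilde J_\sigma(v)\to J_\infty(v)$ for fixed $v$, where $J_\infty$ is the functional of \eqref{eq1.11'}, and deduce from an upper bound on the minimax level (via a fixed test function) and a lower bound (via the Nehari constraint) that $\{v_\sigma\}$ is bounded in $H^1_r(\R^N)$ and bounded away from zero; (b) extract $v_\sigma\rightharpoonup v$ in $H^1_r$, with strong $L^p_{loc}$-convergence and local uniform convergence away from the origin, using the radial compactness and the uniform exponential decay inherited from (i)--(ii); (c) pass to the limit --- in the weak formulation with compactly supported test functions, or via continuous dependence of the radial ODE on its initial data --- identifying $v\ge0$, $v\not\equiv0$ as a positive $H^1_r$-solution of \eqref{eq1.11'}; (d) apply Theorem \ref{Thm1.2}, whose hypotheses (V1) and (V2) are directly verified for $V(r)=\alpha\log r$, $\alpha>1-N$ (here $\Theta=(1-N,\alpha]$, so one may take $\theta=\alpha$, and one uses the correspondence of Proposition \ref{LemA1} between radial $H^1$-solutions of \eqref{eq1.11'} and solutions of the associated problem \eqref{P3}), to conclude that $v$ is \emph{the} unique positive radial solution, so the whole family converges. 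Finally, testing the equations upgrades (b) to strong convergence in $H^1_r$, and interior Schauder estimates on $\R^N\setminus B_{\gamma''}(0)$ give the $C^{2-\gamma''}$ and $C^2$ convergence.

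The hard part is the uniform-in-$\sigma$ control in step (iii). The rescaled functional $\widetilde J_\sigma$ sets two coefficients of size $\sim\sigma^{-1}$ against each other, so the coercivity of the constrained minimization and the strict positivity of its value --- hence the non-triviality of the limit --- must be read off the $O(1)$ remainder of the expansion. Moreover $\sigma^{-1}(|v_\sigma|^{2\sigma}-|y|^{\alpha\sigma})$ converges to $\log v^2-\alpha\log|y|$ only locally uniformly on $\{v>0\}$, not up to infinity (where $\log v_\sigma^2\to-\infty$) nor, when $\alpha<0$, up to the origin (where $|y|^{\alpha\sigma}$ blows up), so the exponential decay estimate of (i) and the Strauss lemma are indispensable in taming it, and --- as everywhere in this paper --- the non-Lipschitz, sign-changing logarithm forces the De Giorgi--Nash--Moser/bootstrap steps to be run with a truncation instead of a direct growth estimate. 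Keeping track of how the Sobolev constants and the exponent $2\sigma+2$ depend on $\sigma$ is a further, more routine, layer of bookkeeping.
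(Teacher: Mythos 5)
Your overall architecture coincides with the paper's: existence by constrained minimization in a weighted radial space, decay by comparison with $\exp(-c|x|^{(\alpha\sigma+2)/2})$, uniqueness by running the Byeon--Oshita ODE argument on the radial equation, then the rescaling $v_\sigma$, an energy expansion with uniform upper/lower bounds on the minimax level, identification of the limit as a solution of \eqref{eq1.11'}, and uniqueness of that limit via Theorem \ref{Thm1.2} (your scaling $\sigma^{\alpha/4}u_\sigma(\sigma^{-1/2}\cdot)$ differs from the paper's $\sigma^{\alpha/(4+2\alpha\sigma)}u_\sigma(\sigma^{-1/(2+\alpha\sigma)}\cdot)$ only by a factor tending to $1$, which the paper also reconciles at the end). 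However, there is a genuine gap in your part (i) in the repulsive range $1-N<\alpha<0$. Your space $E_\sigma\subset H^1_r$ carries the norm $\|u\|_{E_\sigma}^2=\int(|\nabla u|^2+|x|^{\alpha\sigma}u^2)$, and for $\alpha<0$ this norm gives no control of the $L^2$ mass at infinity (the weight decays), so a minimizing sequence on the Nehari manifold, bounded in $\|\cdot\|_{E_\sigma}$, need not be bounded in $H^1_r$; consequently you cannot invoke the Strauss compact embedding $H^1_r\hookrightarrow L^{2\sigma+2}$ as you propose, and $E_\sigma$ is not even complete for this norm (its completion $X^\sigma_r$ strictly contains $H^1_r$). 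The missing ingredient is precisely the paper's Lemma \ref{Lem5.1}: a radial lemma adapted to the weighted norm, $|u(x)|\le C_N\|u\|_\sigma|x|^{\frac{1-N}{2}-\frac{\alpha\sigma}{4}}$, proved from $\|\cdot\|_\sigma$ alone (using $N-1+\frac{\alpha\sigma}{2}>0$, a consequence of \eqref{eq1.13}); this yields the compact embedding of $X_r^\sigma$ into $L^p$ for $p\in[2\sigma+2,2^*)$ and, combined with the comparison decay estimate, shows a posteriori that the critical point is square integrable, hence in $H^1_r$. Without some such substitute your existence step fails for $\alpha<0$ (and the appeal to Hardy's inequality also needs care at $N=2$).

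A second, smaller issue is in (iii): the uniform-in-$\sigma$ exponential decay of $v_\sigma$ is not ``inherited from (i)--(ii)'', since the constants $C_\sigma,c_\sigma$ there depend on $\sigma$. The paper proves a separate $\sigma$-independent bound $v_\sigma(x)\le Ce^{-|x|}$ by a fresh comparison argument based on the elementary inequality $\sigma^{-1}(s^{t\sigma}-1)\ge t\log s$, and it bounds $v_\sigma$ not merely in $H^1_r$ but in the log-weighted space $Y_r$ (norm containing $(\log|x|)^+v^2$); this stronger bound is what makes the limiting terms $\int_{\R^N}v^2\log|x|$ finite and allows the passage to the limit and the upgrade to strong convergence. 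Your outline correctly identifies where these uniform estimates are needed, but they must be proved rather than quoted from (i)--(ii).
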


Note that assumptions \eqref{eq1.13} and $-\sigma\alpha<1$ hold automatically for either $\alpha\geq 0$ or $\sigma$ small enough.

The solution to \eqref{eq1.11} will be found in a suitable space and proved to belong to $H^1_r(\R^N)$. The proof of existence depends on a 
generalization of the radial lemma of Strauss \cite{st}. We remark that when  $\alpha=0$, Theorem \ref{Thm1.5}(iii) is in fact the convergence result  
considered in \cite[Theorem 1.1]{WZ}. We also refer \cite{ITWZ2,JLL} for related
works on sublinear elliptic equations and eigenvalue problems.

The remainder of the paper is organized as follows. 
In Section \ref{sec2}, we  introduce a suitable work space and show the existence of 
a radial  solution to \eqref{P1delta} and prove its positivity and the  Gaussian decay estimate
\eqref{decay}.
Section \ref{sec3} will be devoted to proving the uniqueness  and radial symmetry of positive  solutions to \eqref{P1}.
In addition, in Section \ref{sec4}, we also deal with the nondegeneracy of the unique radial solution to \eqref{P1} respectively in $H^1_r(\R^N)$ and in $H^1(\mathbb{R}^N)$.
In Section \ref{sec5}, by a revised radial lemma in a suitable function space and a decay estimate, 
we first obtain a positive radial solutions
to \eqref{eq5.1} and show the uniqueness of this solutions
in the radial  setting. 
Then we show that  the solution converges to unique
positive radial solution of \eqref{eq1.11'} as $\sigma\to0^+$, which extends the result of \cite{WZ}.
Finally, in the Appendix \ref{secA}, some technical results which are useful for the proof of uniqueness will be given. 
    
  \setcounter{equation}{0}
\section{Existence and decay estimate}\label{sec2}
Throughout the section, we assume (Vab) and (V1).
First, we show that the  problem 
\begin{equation}\label{P1B}
		-\Delta u+ V_\delta	u=B_\delta u\log u^2,  \quad u\in H^1(\R^N),
\end{equation}
	 admits a   radial ground state solution, where 
	 $V_\delta=V+\delta a $ and $B_\delta=1+\delta b $.
 Let 
\begin{equation}\label{alpha1}
	 \alpha_0:=\begin{cases} \displaystyle
	 	\half \left({ \liminf_{r\to+\infty}\frac{V(r)}{\log r}+1-N}\right)\quad &\mbox{if}\quad\displaystyle \liminf_{r\to+\infty}\frac{V(r)}{\log r}\leq 0,\\
	 	0\quad &\mbox{if}\quad \displaystyle\liminf_{r\to+\infty}\frac{V(r)}{\log r}>0.
	 \end{cases}
\end{equation}
Note that $\alpha_0\leq 0$ and $1-N<\alpha_0<\liminf_{r\to+\infty}\frac{V(r)}{\log r}$.
Set 
$$ E:=\Set{u\in H^1(\R^N) | \int_{\R^N} (V-\alpha_0\log |x|)^+ u^2<+\infty}
$$
with the norm
$$\|u\|=\left(\int_{\R^N}(|\nabla u|^2+u^2)+\int_{\R^N}(V-\alpha_0\log |x|)^+u^2\right)^\half.
$$
Note that 
\begin{equation}\label{va1}
	 \liminf_{r\to+\infty}\frac{V(r)-\alpha_0\log r}{\log r}=\liminf_{r\to+\infty}\frac{V(r)}{\log r}-\alpha_0>0.
\end{equation}
Then the embedding $E\subset L^p(\R^N)$ is compact for $p\in[2,2^*)$, where $2^*= {2N}/{(N-2)^+}$. 
As in \cite{ITWZ},  the Gagliardo--Nirenberg interpolation inequality is frequently used in this section.
For every $p\in(2,2^*)$, we have
\begin{equation}\label{eq:2.14}
\|u\|_{L^p(\R^N)} 
\leq C \norm{u}_{L^2(\R^N)}^{1-\nu_p}\|\nabla u\|_{{L^2(\R^N)}}^{\nu_p }
\quad \text{for all $ u \in H^1(\R^N)$}
\end{equation}
where $\nu_p =\frac{N(p-2)}{2p}\in (0,1)$ satisfies
$\nu_p \to 0$ as $p\to 2$.
     Moreover, by combining Young's inequality and \eqref{eq:2.14},
for any $\e>0$, there is a $C_\e>0$ such that
	\begin{equation}\label{eq:2.16}
		\begin{aligned}
			\| u \|_{L^p(\R^N)}^2 
			&\leq C_\e \| u \|_{L^2(\R^N)}^2
			+  \e \|\nabla u \|_{{L^2(\R^N)}}^2.
		\end{aligned}
	\end{equation}
	We consider another equivalent norm on $E$ for convenience.
\begin{Lem}\label{Lem21}
For each $\delta\geq 0$,	there is $\mu_\delta>0$ such that
	   the norm $\|\cdot\|_\delta$ defined by
	\begin{equation*} 
		\|u\|_\delta^2=\int_{\R^N}\left(|\nabla u|^2+(V_\delta-\alpha_0\log|x|) u^2+
		\mu_\delta  u^2\right),\quad u\in E
	\end{equation*}	
	is equivalent to $\|\cdot\|$ on $E$.
\end{Lem}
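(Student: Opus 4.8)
The plan is to prove a two-sided bound $c_\delta\|u\|^2\le\|u\|_\delta^2\le C_\delta\|u\|^2$ on $E$, after choosing $\mu_\delta$ large enough that $\|\cdot\|_\delta$ is coercive (and hence genuinely a norm, being induced by a symmetric bilinear form). Abbreviate $W:=V-\alpha_0\log|x|$, so that $V_\delta-\alpha_0\log|x|=W+\delta a$, and split $W=W^+-W^-$. The structural fact driving everything is that $W^-\in L^q(\R^N)$ for some $q>N$: by (V1) we have $V(|x|)\in L^q_{loc}(\R^N)$, and $\log|x|\in L^q_{loc}(\R^N)$ for every finite $q$, so $W\in L^q_{loc}(\R^N)$; moreover \eqref{va1} forces $W(r)\to+\infty$ as $r\to\infty$, so $W^-$ has bounded support. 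Since $a\in C^2_0(\R^N)$ is bounded with compact support, the same holds for $(W+\delta a)^-$, while $(W+\delta a)^+\le W^++\delta\|a\|_{L^\infty}$ and $(W+\delta a)^-\le W^-+\delta\|a\|_{L^\infty}$.

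The first key step is a relative-boundedness estimate for $W^-$. By H\"older's inequality with conjugate exponents $q$ and $q/(q-1)$,
\[ \int_{\R^N}W^-u^2\le\|W^-\|_{L^q(\R^N)}\,\|u\|_{L^p(\R^N)}^2,\qquad p:=\tfrac{2q}{q-1}, \]
and since $q>N$ one checks $p\in(2,2^*)$, so \eqref{eq:2.16} is available for this $p$. Choosing $\e>0$ in \eqref{eq:2.16} with $\|W^-\|_{L^q(\R^N)}\,\e\le\half$ yields a constant $C_0>0$ (depending only on $\|W^-\|_{L^q}$ and $N$) with
\[ \int_{\R^N}W^-u^2\le\half\|\nabla u\|_{L^2(\R^N)}^2+C_0\|u\|_{L^2(\R^N)}^2\qquad\text{for all }u\in H^1(\R^N). \]
In particular $\|u\|_\delta^2$ is finite for every $u\in E$, since $\int_{\R^N}(W+\delta a)^+u^2\le\int_{\R^N}W^+u^2+\delta\|a\|_{L^\infty}\|u\|_{L^2}^2<\infty$ by the definition of $E$ and $E\subset H^1(\R^N)$.

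Now fix $\mu_\delta:=C_0+\delta\|a\|_{L^\infty}+1$. From the previous display, $\int_{\R^N}(W+\delta a)u^2\ge-\half\|\nabla u\|_{L^2}^2-(C_0+\delta\|a\|_{L^\infty})\|u\|_{L^2}^2$, hence
\[ \|u\|_\delta^2\ge\half\|\nabla u\|_{L^2(\R^N)}^2+\|u\|_{L^2(\R^N)}^2, \]
so $\|\cdot\|_\delta$ is coercive. The upper bound is immediate from $(W+\delta a)^+\le W^++\delta\|a\|_{L^\infty}$, giving $\|u\|_\delta^2\le\|\nabla u\|_{L^2}^2+\int_{\R^N}W^+u^2+(\mu_\delta+\delta\|a\|_{L^\infty})\|u\|_{L^2}^2\le(1+\mu_\delta+\delta\|a\|_{L^\infty})\|u\|^2$. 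For the lower bound, use $W^+=W+W^-$ to write $\int_{\R^N}W^+u^2=\int_{\R^N}(W+\delta a)u^2+\int_{\R^N}(W^--\delta a)u^2\le\|u\|_\delta^2+\half\|\nabla u\|_{L^2}^2+(C_0+\delta\|a\|_{L^\infty})\|u\|_{L^2}^2$; bounding $\|\nabla u\|_{L^2}^2\le2\|u\|_\delta^2$ and $\|u\|_{L^2}^2\le\|u\|_\delta^2$ via the coercivity estimate shows $\int_{\R^N}W^+u^2\le C\|u\|_\delta^2$, and adding $\|\nabla u\|_{L^2}^2+\|u\|_{L^2}^2\le3\|u\|_\delta^2$ gives $\|u\|^2\le C_\delta'\|u\|_\delta^2$, completing the equivalence.

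The only genuine obstacle is controlling the negative part of the potential near the origin, where $V$ may be singular and $-\alpha_0\log|x|\to-\infty$; this is exactly what the hypothesis $V(|x|)\in L^q_{loc}$ with $q>N$ is designed for, since it places $W^-$ in a Lebesgue space dual to a subcritical Sobolev exponent so that the interpolation inequality \eqref{eq:2.16} absorbs it into a small multiple of the gradient term. Every term involving $\delta a$ is harmless because $a\in C^2_0(\R^N)$ is bounded with compact support.
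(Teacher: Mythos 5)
Your proof is correct and follows essentially the same route as the paper: the negative part of $V_\delta-\alpha_0\log|x|$ has compact support by \eqref{va1} and lies in $L^q$ with $q>N$, so H\"older with exponent $q$ plus the interpolation inequality \eqref{eq:2.16} absorbs it into $\tfrac12\|\nabla u\|_{L^2}^2+C\|u\|_{L^2}^2$, after which a large $\mu_\delta$ yields the equivalence. You merely spell out the easy direction and the reverse bound, which the paper leaves implicit.
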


\begin{proof}
	It suffices to find $\mu_\delta>0$ such that for every $u\in E$,
	\begin{equation}\label{eq2.8}
		\int_{\R^N}\left(|\nabla u|^2+(V_\delta-\alpha_0\log|x|) u^2+
		\mu_\delta   u^2\right)  
		\geq\half\|u\|^2. 
	\end{equation}
In fact, by \eqref{va1} and (Vab),
there is $R_1>0$ such that $\supp (V_\delta-\alpha_0 \log |x|)^-\subset B_{R_1}(0)$.
Then by
the H\"older's inequality and \eqref{eq:2.16}, for some $C_1  > 0$, there holds
\[
	\begin{aligned}
	  \int_{\R^N}  (V_\delta-\alpha_0 \log |x|)^- u^2  
	\leq&  \|V_\delta-\alpha_0\log |x|\|_{L^q(B_{R_1}(0))} \| u \|_{L^{\frac{2q}{q-1}}(B_{R_1}(0))}^2\\
	\leq& \frac{1}{2 } \| \nabla u \|_{L^2(\R^N)}^2 + C_1 \| u \|_{L^2( \R^N )}^2
	\end{aligned}
\]
for all $u \in E$. So, setting $\mu_\delta=C_1+\half+\delta\max_{x\in\R^N}|a(x)|$, we have
\eqref{eq2.8}.	
\end{proof}

By   \cite[Lemma 2.1]{ITWZ},  there hold $uv\log u^2\in L^1(\R^N)$ for $u,v\in E$.
Especially, by \cite[Remark 2.1]{ITWZ},   we can define the  $C^1$-functional in $E$,
	$$I_\delta(u)=\frac12\int_{\R^N} |\nabla u|^2 + (V_\delta+B_\delta) u^2- \frac12 \int_{\R^N} B_\delta u^2\log u^2.$$
	Denote $E_r:= E\cap H^1_r(\R^N)$.
	Note that by the radial lemma of Strauss \cite{st} (see also \cite{BL}), there is $C_N>0$ independent of $u$,
such that 
\begin{equation}\label{radial}
	|u(x)|\leq C_N|x|^{\frac{1-N}2}\|u\|_{H^1(\R^N)}.
\end{equation}
We consider the minimization problem
\begin{equation}\label{cdelta}
	c_\delta=\inf_{u\in\mathcal N_r^\delta } I_\delta(u)
\end{equation}
where
	$$\mathcal N_r^\delta=\Set{u\in E_r\setminus\{0\} 
	|
	0=J_\delta(u):=I_\delta'(u)u=\int_{\R^N}|\nabla u|^2 +V_\delta u^2
	-\int_{\R^N}B_\delta u^2\log u^2}.
	$$
	
	\begin{Pro}\label{lemma2.1}
		The following statements hold for each $\delta \geq0$.
		\begin{enumerate}
			\item[\rm (i)] For each $u\in E_r\setminus\{0\}$, there is a unique $t_u\in\R$ such that $e^{t_u/2}u\in\mathcal  N_r^\delta\setminus\{0\}$. 
			 Moreover,
			$t_u= (\int_{\R^N}B_\delta u^2)^{-1} J_\delta(u)$, and it is the unique maximum point of the function $t(\in\R)\mapsto I_\delta (e^{t/2}u)$.
		
			\item[\rm (ii)]  $c_\delta>0$ and
			the minimization problem \eqref{cdelta} is attained by   
			  a  radial solution,  which is positive in $\R^N\setminus\{0\}$, to \eqref{P1B}.
		\end{enumerate}	
	\end{Pro}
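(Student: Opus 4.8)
The plan is to run the classical Nehari-manifold minimisation, as in \cite{BO,KT} and \cite{ITWZ}, exploiting that $I_\delta$ is completely explicit along the curves $t\mapsto e^{t/2}u$. For part (i), a one-line substitution, using $\int_{\R^N}B_\delta(e^{t/2}u)^2\log(e^{t/2}u)^2=e^t\!\int_{\R^N}B_\delta u^2\log u^2+te^t\!\int_{\R^N}B_\delta u^2$, gives, with $A:=J_\delta(u)$ and $B:=\int_{\R^N}B_\delta u^2>0$ (positive since $B_\delta=1+\delta b\ge 1$),
\[
  I_\delta(e^{t/2}u)=\tfrac{e^{t}}{2}\bigl(A+(1-t)B\bigr),\qquad
  J_\delta(e^{t/2}u)=e^{t}(A-tB)=2\,\tfrac{\d}{\d t}I_\delta(e^{t/2}u).
\]
Hence $t\mapsto I_\delta(e^{t/2}u)$ is strictly increasing on $(-\infty,A/B)$ and strictly decreasing on $(A/B,+\infty)$, so $t_u:=A/B=J_\delta(u)/\int_{\R^N}B_\delta u^2$ is simultaneously its unique maximiser and the unique zero of $t\mapsto J_\delta(e^{t/2}u)$ — which is exactly the condition $e^{t_u/2}u\in\mathcal N_r^\delta\setminus\{0\}$. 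I would record here the two facts used below: for $u\in\mathcal N_r^\delta$ one has $I_\delta(u)=I_\delta(u)-\tfrac12 J_\delta(u)=\tfrac12\int_{\R^N}B_\delta u^2$, and $J_\delta'(u)u=2\,\tfrac{\d}{\d t}\big|_{t=0}J_\delta(e^{t/2}u)=-2\int_{\R^N}B_\delta u^2\ne0$ (note $J_\delta$ is $C^1$ on $E$ even though $I_\delta$ is not $C^2$, being the difference of a smooth quadratic form and the $C^1$ term $\int_{\R^N}B_\delta u^2\log u^2$).

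\emph{Positivity of $c_\delta$.} By the identity just recorded, $c_\delta=\inf_{\mathcal N_r^\delta}\tfrac12\int_{\R^N}B_\delta u^2$, so it suffices to bound $\int_{\R^N}B_\delta u^2$ away from $0$ on $\mathcal N_r^\delta$. Normalising $v:=u/\norm{u}_{L^2}$, part (i) gives $\norm{u}_{L^2}^2=\exp\!\bigl(J_\delta(v)\big/\!\int_{\R^N}B_\delta v^2\bigr)$, hence $I_\delta(u)=\tfrac12\exp\!\bigl(J_\delta(v)\big/\!\int_{\R^N}B_\delta v^2\bigr)\int_{\R^N}B_\delta v^2$; since $1\le\int_{\R^N}B_\delta v^2\le 1+\delta\max b$, the whole matter reduces to showing that $J_\delta$ is bounded below on $\{v\in E_r:\norm{v}_{L^2}=1\}$. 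This is the crux, and where I expect the main difficulty. The radial constraint enters through the Strauss bound \eqref{radial}, $\log v^2\le (1-N)\log\abs x+\log(C\norm{v}_{H^1}^2)$: on $\{\abs x\ge 1\}$ this gives $-B_\delta v^2\log v^2\ge (N-1)(\log\abs x)\,v^2-M\log\!\bigl(C(\norm{\nabla v}_{L^2}^2+1)\bigr)\,v^2$ with $M:=\sup B_\delta$, and grouping $(N-1)(\log\abs x)v^2$ with $V_\delta v^2$ and using (V1) — which makes $V(r)+(N-1)\log r$ bounded below on $[1,\infty)$ — neutralises the repulsion of $V$ at infinity; on $\{\abs x<1\}$ the singular part of $V$ and the term $\int(v^2\log v^2)^+\lesssim\norm{v}_{L^p}^p$ (some $p\in(2,2^*)$) are absorbed by H\"older together with the Gagliardo--Nirenberg inequalities \eqref{eq:2.14}--\eqref{eq:2.16}, exactly as in Lemma \ref{Lem21}. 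Collecting the pieces yields an estimate of the form $J_\delta(v)\ge\tfrac12\norm{\nabla v}_{L^2}^2-C\log\!\bigl(e+\norm{\nabla v}_{L^2}^2\bigr)-C$, whose right-hand side is bounded below. (The choice of $\alpha_0$ in \eqref{alpha1} and of the weight in the definition of $E$ is precisely what makes these competing logarithmic quantities reconcilable, and $\alpha_0>1-N$ is used once more below.)

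\emph{Attainment of \eqref{cdelta}.} Let $u_n\in\mathcal N_r^\delta$ with $I_\delta(u_n)\to c_\delta$. Then $\int_{\R^N}B_\delta u_n^2=2I_\delta(u_n)$ is bounded, so $\norm{u_n}_{L^2}$ is bounded and, by $c_\delta>0$, bounded away from $0$; the normalising identity then bounds $J_\delta(v_n)$ with $v_n=u_n/\norm{u_n}_{L^2}$, and the previous estimate bounds $\norm{\nabla v_n}_{L^2}$, hence $\norm{u_n}_{H^1}$. To bound $\norm{u_n}_\delta$, use the constraint in the form $\norm{u_n}_\delta^2=\int_{\R^N}u_n^2\bigl(B_\delta\log u_n^2-\alpha_0\log\abs x\bigr)+\mu_\delta\norm{u_n}_{L^2}^2$: by the Strauss bound, on $\{\abs x\ge1\}$ one has $B_\delta\log u_n^2-\alpha_0\log\abs x\le(1-N-\alpha_0)\log\abs x+C\le C$ (here $1-N-\alpha_0<0$), while on $\{\abs x<1\}$ the bracket times $u_n^2$ has bounded integral by \eqref{eq:2.14}. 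Thus $(u_n)$ is bounded in $E_r$, and the compact embedding $E_r\hookrightarrow L^p(\R^N)$, $p\in[2,2^*)$, yields (along a subsequence) $u_n\rightharpoonup u$ in $E_r$, $u_n\to u$ in such $L^p$ and a.e.; in particular $\int_{\R^N}B_\delta u_n^2\to\int_{\R^N}B_\delta u^2$, so $\tfrac12\int_{\R^N}B_\delta u^2=c_\delta>0$ and $u\ne0$. Passing to the limit in $J_\delta(u_n)=0$ — weak lower semicontinuity of the quadratic part, convergence of the logarithmic and local terms handled as in \cite{ITWZ}, and a Fatou/truncation argument near infinity (using the Strauss bound to get a uniform lower bound for $V_\delta-B_\delta\log u_n^2$ there) — gives $J_\delta(u)\le0$; were $J_\delta(u)<0$ then $t_u<0$ and $e^{t_u/2}u\in\mathcal N_r^\delta$ would satisfy $I_\delta(e^{t_u/2}u)=\tfrac12 e^{t_u}\int_{\R^N}B_\delta u^2<\tfrac12\int_{\R^N}B_\delta u^2=c_\delta$, impossible. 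Hence $J_\delta(u)=0$, $u\in\mathcal N_r^\delta$ and $I_\delta(u)=c_\delta$: the infimum is attained.

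\emph{Euler--Lagrange equation and positivity.} Since $u$ minimises $I_\delta$ over $\{J_\delta=0\}$ and $J_\delta'(u)u=-2\int_{\R^N}B_\delta u^2\ne0$, the Lagrange multiplier rule gives $I_\delta'(u)=\lambda J_\delta'(u)$; testing with $u$, together with $I_\delta'(u)u=J_\delta(u)=0$, forces $\lambda=0$, so $u$ is a critical point of $I_\delta$ on $E_r$ and, by the principle of symmetric criticality ($I_\delta$ being rotationally invariant and $E_r$ the fixed-point subspace), a critical point on $E$, i.e. a solution of \eqref{P1B}. Replacing $u$ by $\abs u$ — which also lies in $\mathcal N_r^\delta$ with the same energy, hence is again a minimiser and solution — we may assume $u\ge0$; since $u\not\equiv0$ and the zeroth-order coefficient $V_\delta-B_\delta\log u^2$ is locally bounded below on the connected open set $\R^N\setminus\{0\}$, the strong maximum principle gives $u>0$ there, completing the proof.
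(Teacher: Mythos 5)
Your proposal follows, in substance, the paper's own Nehari-manifold proof: part (i) is the same one-line computation, and the attainment step (boundedness of the minimizing sequence, compact embedding of $E_r$, dominated convergence plus Fatou to get $J_\delta(u)\le 0$, and the scaling argument forcing $t_u=0$) mirrors the paper's. Your route to $c_\delta>0$ and to the a priori bounds — normalizing on the $L^2$-sphere, reducing everything to a lower bound for $J_\delta$ there via the Strauss estimate \eqref{radial}, (V1) and Gagliardo--Nirenberg, and then bounding $\|u_n\|_\delta$ from the constraint — is organized differently from the paper, which instead proves $I_\delta(u)\ge \tfrac14\|u\|^2-C\|u\|^p$ near $0$ and closes a self-consistent inequality for $\|u_n\|_\delta$ with exponents below $2$; but your variant uses the same ingredients and is workable, and your explicit Lagrange-multiplier/symmetric-criticality discussion only makes precise what the paper leaves implicit.

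The genuine gap is the final positivity step. You invoke the strong maximum principle on the grounds that the zeroth-order coefficient $c:=V_\delta-B_\delta\log u^2$ is locally bounded \emph{below} on $\R^N\setminus\{0\}$. That is not the relevant condition: to run the classical strong maximum principle for a nonnegative supersolution of $-\Delta+c$ one needs a local \emph{upper} bound on $c$ (one compares with $-\Delta+M$ using $(M-c)u\ge 0$), and here $c\to+\infty$ near any interior zero of $u$ because of the term $-\log u^2$. A lower bound alone is insufficient: $u(x)=|x-x_0|^2$ satisfies $-\Delta u+c\,u=0$ with $c=2N|x-x_0|^{-2}\ge 0$, yet vanishes at an interior point without vanishing identically. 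Since $s\mapsto s\log s^2$ is not Lipschitz at $0$, positivity has to come from a maximum principle adapted to such absorption terms; this is exactly why the paper writes $-\Delta u+V_\delta^+u\ge B_\delta u\log u^2$ and applies V\'azquez's strong maximum principle \cite{V}, whose Osgood-type condition $\int_0\big(s\beta(s)\big)^{-1/2}\,\d s=+\infty$ is satisfied by $\beta(s)=s|\log s^2|$ (plus a linear term). Replace your maximum-principle citation by this argument and the proof is complete.
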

	\begin{proof}
		1.  The first conclusion follows from  a direction calculation,  
		$$\frac{\d}{\d t} I_\delta (e^{\frac{t}2}u) =\half e^t\left(J_\delta(u)-t\int_{\R^N} B_\delta u^2\right).
		$$
		2.
We write
\begin{align*}
	I_\delta(u)=\frac12\int_{\R^N} |\nabla u|^2 + (V_\delta-\alpha_0\log|x|+\mu_\delta+ B_\delta) u^2- \frac12 \int_{\R^N}  (u^2\log (e^{\mu_\delta} |x|^{-\alpha_0}u^2)+\delta bu^2\log u^2).
\end{align*} 
Fix $p\in (2,2^*)$   such that
\begin{equation}\label{eq2.16}
	p-2+ \nu_p p<2.
\end{equation}
By \eqref{radial},
for some   $C_3>0$
\begin{equation}\label{eq30}
	\begin{aligned}
	 u^2\log (e^{\mu_\delta}& |x|^{-\alpha_0}u^2)+\delta bu^2\log u^2\\
	 \leq&  u^2\left(\log (e^{\mu_\delta} |x|^{-\alpha_0}u^2)\right)^++\delta bu^2\left( \log u^2\right)^+\\
		\leq &
		  u^2\left(\log (e^{\mu_\delta} u^2)\right)^+
		+ u^2\left(\log (e^{\mu_\delta} C_N \|u\|_{H^1(\R^N)}^{\frac{2\alpha_0}{1-N}}
		u^{2-\frac{2\alpha_0}{1-N}})\right)^++\delta bu^2\left( \log u^2\right)^+\\
		\leq& C_3 \left(|u|^p+\|u\|_{H^1(\R^N)}^{p-p_1} |u|^{p_1}\right)\quad \mbox{in $\R^N$},
	\end{aligned}
\end{equation}
where $p_1:=p-\frac{(p-2)\alpha_0}{1-N}\in(2,p]$.
 By \eqref{eq2.8} and \eqref{eq30}, we have 
$I_\delta(u)\geq \frac14\|u\|^2- C\|u\|^p
$ for some $C>0$.
Hence, there is $m_\delta>0$ such that 
$$I_\delta(u)\geq \frac18 m_\delta^2\quad\mbox{for}\quad  \|u\|=m_\delta\quad \mbox{and}\quad I_\delta(u)\geq 0\quad \mbox{for}\quad \|u\|\leq m_\delta.$$
Then 
$$c_\delta=\inf_{u\in \mathcal N_r^\delta } \sup_{t\in\R } I_\delta(e^{\frac{t}2}u)
\geq \frac18 m_\delta^2>0.
$$
To show that $c_\delta$ is achieved. We assume $u_n\in\mathcal N^\delta_r$ is such that 
$I_\delta(u_n)\to c_\delta>0.
$
Since  
\begin{equation}\label{eq2.24}
	\int_{\R^N} B_\delta u_n^2=2I_\delta(u_n)- J_\delta(u_n)=2I_\delta(u_n),
\end{equation}
we see that $\|u_n\|_{L^2(\R^N)}$ is bounded. By $J_\delta(u_n)=0$ and \eqref{eq30} we have
$$ \begin{aligned}
	\|u_n\|_\delta^2=&  \int_{\R^N}  (u_n^2\log (e^{\mu_\delta} |x|^{-\alpha_0}u_n^2)+\delta bu_n^2\log u_n^2)\\
	\leq& C_p \left(\|u_n\|_{L^p(\R^N)}^p+\|u_n\|_{H^1(\R^N)}^{p-p_1}\|u_n\|_{L^{p_1}(\R^N)}^{p_1}\right)\\
	\leq &C_p'\left(\|u_n\|_{L^2(\R^N)}^{(1-\nu_p)p}\|\nabla u_n\|_{L^2(\R^N)}^{\nu_p p}
	+\|u_n\|_{H^1(\R^N)}^{p-p_1}  
	\|u_n\|_{L^2(\R^N)}^{(1-\nu_{p_1})p_1}\|\nabla u_n\|_{L^2(\R^N)}^{\nu_{p_1}p_1}\right)\\
	\leq & C_p''(\|u_n\|_\delta^{\nu_p p}+\|u_n\|_\delta^{p-p_1+\nu_{p_1} p_1}).
\end{aligned}
$$
By \eqref{eq2.16},  we have $p-p_1+\nu_{p_1} p_1 < p-2+ \nu_pp<2$. Then $\|u_n\|_\delta$ is bounded.

Up to a subsequence, we assume $u_n\rightharpoonup u$ in $E$ and $u_n\to u$ in $L^2(\R^N)\cap L^p(\R^N)$.
By \eqref{eq2.24}, we obtain  $\int_{\R^N} B_\delta u^2=2c_\delta>0$ and $u\neq 0$.
By (i), there is $t_u$ such that $e^{t_u/2}u\in\mathcal N_r^\delta$.
To   prove the attainability of $c_\delta$,  it suffices to show that $t_u=0$.

Note that $\eqref{eq30}$ and  the boundedness of $\|u_n\|_\delta$  imply that 
$$u_n^2\left(\log (e^{\mu_\delta} |x|^{-\alpha_0}u_n^2)\right)^++\delta b u_n^2\left( \log u_n^2\right)^+\leq C (|u_n|^p+  |u_n|^{p_1})\quad \text{in}\quad \R^N.
$$
Then by the dominated convergence theorem,
up to a subsequence, we can conclude that, 
\begin{equation}\label{eq221}
	\int_{\R^N} u_n^2\left(\log (e^{\mu_\delta} |x|^{-\alpha_0}u_n^2)\right)^++\delta b u_n^2\left( \log u_n^2\right)^+\to \int_{\R^N} u^2\left(\log (e^{\mu_\delta} |x|^{-\alpha_0}u^2)\right)^++\delta bu^2\left( \log u^2\right)^+.
\end{equation}
Therefore, by (i), \eqref{eq221}, the weakly lower  semicontinuity of norm and  the Fatou's Lemma,
we obtain
\begin{align*}
	t_u\int_{\R^N} B_\delta u^2=  
	 J_\delta(u)
	\leq \liminf_{n\to+\infty} J_\delta(u_n)=0.
\end{align*}
On the other hand, by 
$$2c_\delta\leq 2I_\delta(e^{t_u/2}u)=e^{t_u}\int_{\R^N}B_\delta u^2,
$$
we have $t_u\geq 0$.  Then $t_u=0$ and $u\in \mathcal N_r^\delta$.

Since $c_\delta$ is also achieved by $|u|$. We may assume that $u\geq 0$ and solves
\eqref{P1B}. By the regularity theory, $u\in C^{2-\gamma}(\R^N)\cap C^2(\R^N\setminus\{0\})$.
Noting that $u$ satisfies
$$-\Delta u+V_\delta^+ u\geq B_\delta u\log u^2 \quad \mbox{in}\quad \R^N\setminus\{0\},
$$
by the maximum principle in \cite{V}, we have $u>0$ in $\R^N\setminus\{0\}$.
\end{proof}
The positivity and the decay estimate of the solution is obtained in the following.
	\begin{Pro}\label{LemA1}
		Let $u(r)$  solves \eqref{P3}. Then 
		\begin{enumerate}
			\item[\rm (i)] $u(0)>0$,
			\item[\rm (ii)]$\lim_{r\to+\infty}u(r) e^{\tau r^2}=0$ for any $\tau\in(0, 1/2)$.
		\end{enumerate}
		
  \end{Pro}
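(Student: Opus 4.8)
The plan is to treat the two assertions separately: (i) is a local statement at the origin, while (ii) concerns the behaviour as $r\to\infty$.

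\emph{For (i).} The idea is to promote $u$ to a genuine weak solution of \eqref{P1delta} across $x=0$ and then invoke a strong maximum principle adapted to the logarithmic nonlinearity. Starting from $u'(0)=0$ and the integrated equation $r^{N-1}u'(r)=\int_0^r s^{N-1}\big[(V+\delta a)u-(1+\delta b)u\log u^2\big]\,\d s$, I would use $V(|x|)\in L^q_{loc}$ ($q>N$), the boundedness of $u$ near $0$, and the boundedness of $t\mapsto t\log t^2$ near $0$, to get $|u'(r)|=O(r^{1-\gamma})$ with $\gamma=N/q\in(0,1)$; hence $u\in H^1(B_1)$, the right-hand side of \eqref{P1delta} lies in $L^1(B_1)$, and since $\{0\}$ has zero $H^1$-capacity for $N\ge2$, $u$ is a weak solution of \eqref{P1delta} in a whole ball $B_\rho$. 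Now suppose $u(0)=0$; shrinking $\rho$ we may assume $u\le1$ on $B_\rho$, so there $\log u^2\le0$ and $u$ satisfies $\Delta u+V_\delta^-u\ge B_\delta\,u|\log u^2|\ge u|\log u^2|$ weakly, with $V_\delta^-\le V^-+\delta|a|\in L^q$, $q>N>N/2$. Because the model right-hand side $s\mapsto s|\log s^2|$ satisfies Vázquez's condition $\int_{0^+}\big(\int_0^t s|\log s^2|\,\d s\big)^{-1/2}\d t=+\infty$, the strong maximum principle (the one already used for positivity in $\R^N\setminus\{0\}$) forces $u\equiv0$ near $0$, contradicting $u>0$ on $(0,\infty)$. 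Hence $u(0)>0$.

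\emph{For (ii), first: $u$ is bounded and $u(r)\to0$.} For $r$ beyond $\supp a\cup\supp b$ the equation reads $u''+\tfrac{N-1}{r}u'=(V-\log u^2)u$. At a local maximum $r_0$ one gets $V(r_0)-\log u(r_0)^2\le0$, and since $\theta\in\Theta$ gives $V(r_0)\ge\theta\log r_0-M$ for $r_0$ large, this yields $u(r_0)\ge e^{-M/2}r_0^{\theta/2}$, contradicting the boundary condition $u(r)=o(r^{\theta/2})$. So $u$ has no local maximum for large $r$, hence is eventually monotone; the same equation precludes unbounded monotone behaviour, so $u$ is eventually decreasing and its limit must be $0$ (using $u=o(r^{\theta/2})$ when $\theta\le0$, and the equation when $\theta>0$, since then $V\to+\infty$). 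Consequently $-\log u^2\to+\infty$; combining $V(r)\ge\theta\log r-M$ with $u(r)=o(r^{\theta/2})$ to control $V$ when $\theta<0$, we obtain $c(r):=V(r)-\log u(r)^2\to+\infty$, in particular $c>0$ for $r$ large.

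\emph{For (ii), second: bootstrapping the decay.} On $(R,\infty)$ I would compare $u$ with explicit radial supersolutions of $-\Delta\phi+c\phi=0$. Since $c(r)\to+\infty$, $\phi=Ar^{-m}$ is a supersolution for every $m$, so $u(r)=O(r^{-m})$ for all $m$; this forces $c(r)\ge m\log r$ eventually for all $m$, so $\phi=Ae^{-br}$ is a supersolution for every $b$, giving $u(r)=O(e^{-br})$ and hence decay faster than every exponential. To reach the sharp Gaussian rate I would multiply the equation by $u'$ and integrate on $(r,\infty)$—the boundary terms at infinity vanish by the super-exponential decay just obtained—which, after controlling the lower-order contributions (the term $\int_r^\infty V'u^2$ is lower order once $u$ decays super-exponentially, using $V^-=o(-\log u^2)$), yields for $w:=-\log u$ the inequality $(w')^2\ge 2w\,(1-o(1))$; integrating $\tfrac{w'}{\sqrt w}\ge\sqrt2\,(1-o(1))$ gives $w(r)\ge\tfrac12 r^2(1-o(1))$, i.e. $\liminf_{r\to\infty}r^{-2}\big(-\log u(r)\big)\ge\tfrac12$. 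Equivalently $u(r)e^{\tau r^2}=e^{\tau r^2-w(r)}\to0$ for every $\tau\in(0,\tfrac12)$, which is (ii).

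\emph{Main obstacle.} The delicate point is the \emph{sharp} constant $\tfrac12$ in (ii): even an estimate $u\le Ce^{-cr^2}$ for some $c>0$ is not free, since the barrier iteration above stalls at sub-Gaussian rates $e^{-r^\alpha}$, $\alpha<2$—the bound $c(r)\gtrsim r^2$ needed to run a Gaussian barrier is itself essentially the conclusion—so one must pass through the energy identity above and handle carefully its lower-order terms ($w''$, $\tfrac{N-1}{r}w'$, and $\int_r^\infty V'u^2$ for a possibly non-monotone, possibly very negative $V$). A secondary subtlety is justifying the strong maximum principle in (i) for a merely $L^q_{loc}$ potential that is singular at the origin.
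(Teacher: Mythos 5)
The differential inequality you invoke points the wrong way for a strong maximum principle: $\Delta u+V_\delta^-u\ge u|\log u^2|$ is a \emph{lower} bound on $\Delta u$, and no such bound can force a nonnegative function vanishing at a point to vanish identically (the radial function $u(x)=|x|$ satisfies $\Delta u=(N-1)/|x|\ge u|\log u^2|-c\,u$ near $0$, is nonnegative, and vanishes only at the origin). The inequality in the useful direction that follows from the equation is $-\Delta u+V_\delta^+u+B_\delta u|\log u^2|=V_\delta^-u\ge 0$, but there the coefficient is $V_\delta^+$, which is exactly the part of the potential that may be singular at the origin under (V1) (e.g. $V=\alpha\log r$ with $1-N<\alpha<0$, or $r^{-s}$ with $s<N/q$); V\'azquez's principle, as used in the paper, is applied only on $\R^N\setminus\{0\}$ where $V_\delta^+$ is locally bounded, and its extension to an $L^q$-singular potential at the very point where $u$ is assumed to vanish is precisely the "secondary subtlety" you flag but do not resolve. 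The paper sidesteps all of this with an elementary ODE argument: assuming $u(0)=u'(0)=0$, it writes the integrated equation, estimates the running maximum $v(r)=\max_{[0,r]}u$ via H\"older (using $V\in L^N$ near $0$) to obtain $v(r_0)\le C\int_0^{r_0}\bigl(v-v\log v^2\bigr)\d s$, and concludes $v\equiv 0$ from an Osgood-type uniqueness criterion, contradicting $u>0$ on $(0,\infty)$.

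\textbf{On part (ii).} Your preliminary step ($u\to 0$ and $V-\log u^2\to+\infty$) is fine, and your own diagnosis is correct that the barrier bootstrap stalls below Gaussian rates; the problem is that the energy identity you propose to close the gap is not available under (V1). Multiplying by $u'$ and integrating on $(r,\infty)$ produces the boundary term $Vu^2$ at infinity and the term $\int_r^\infty V'u^2$, and (V1) imposes \emph{no upper bound on $V$ and no control at all on $V'$}: for $V(r)=\log r+\sin\bigl(e^{r^3}\bigr)$ the derivative term overwhelms any super-exponential decay of $u$, and for $V$ growing like $e^{e^r}$ one cannot even assert $Vu^2\to 0$ (indeed only $\liminf_{r\to\infty}r^{N-1}V u=0$ is available, cf. Lemma \ref{Cor2.1}). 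So the claims "boundary terms vanish" and "$\int_r^\infty V'u^2$ is lower order" are unjustified, and with them the whole passage to $(w')^2\ge 2w(1-o(1))$. The paper's proof reaches the sharp constant without touching $V'$ or any upper bound on $V$: setting $v=r^{-\theta/2}u$, the assumption $\theta\in\Theta$ gives the one-sided bound $V-\theta\log r\ge-\mu$, hence $v''+\frac{N-1+\theta}{r}v'+v\log(e^\mu v^2)\ge 0$ for large $r$, and one compares $v$ directly with the Gaussian $v_0(r)=2v(R)e^{\tau_1R^2}e^{-\tau_1r^2}$ for any $\tau<\tau_1<\tfrac12$: the barrier is a supersolution of the \emph{same logarithmic} inequality because $\log v_0^2$ itself supplies the $-2\tau_1r^2$ term and $4\tau_1^2-2\tau_1<0$, and a touching-point argument using the monotonicity of $s\log s$ near $0$ concludes. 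That is the idea your outline is missing: the quadratic growth needed to run a Gaussian barrier is generated by evaluating the nonlinearity at the barrier, not by first proving a quadratic lower bound on $V-\log u^2$.
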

  \begin{proof}
	(i) Assume on the  contrary  that $u(0)=u'(0)=0$. We have
   $$(r^{N-1}u')'  -   r^{N-1}V_\delta(r)u + r^{N-1} B_\delta(r) u^2\log u^2=0,\quad r\in(0,+\infty),
   $$
   or 
   $$u(r)=\int_0^r s^{1-N}\d s \int_0^s \left(t^{N-1}V_\delta(t)u(t) - t^{N-1} B_\delta(t) u^2(t)\log u^2(t)\right)\d t.
   $$
   Set $v(r)=\max_{0\leq s\leq r}u(s)$. Then $0<u(r)\leq v(r)$ for $r >0$ and $v(r)$ is increasing with respect to $r$.
   Fix $r_0>0$   small enough such that $v(r_0)<e^{-1}$. Let $r_1\in(0,r_0]$ be such that $v(r_0)=u(r_1)$, by monotonicity,   
   we have
   \begin{align*}
	   v(r_0)= &u(r_1)
	   =\int_0^{r_1} s^{1-N}\d s \int_0^s \left(t^{N-1}V_\delta(t)u(t) - t^{N-1} B_\delta(t) u^2(t)\log u^2(t)\right)\d t\\
	   \leq& \int_0^{r_0} s^{1-N}\d s \int_0^s \left(t^{N-1}|V_\delta(t)|v(s) - t^{N-1} B_\delta(t) v^2(s)\log v^2(s)\right)\d t\\
	   \leq &C\int_0^{r_0} \left[s^{1-N} v(s) \left(\int_0^s t^{\frac{(N-1)^2}{N}\cdot\frac{N}{N-1}} \d t \right)^\frac{N-1}N\left(\int_0^s t^{N-1}|V_\delta(t)|^N
	   \d t \right)^\frac{1}N  -    v(s)\log v^2(s) \right]    \d s\\
	   \leq& C\int_0^{r_0} \left[s^{1-N} v(s) \left(\int_0^s t^{N-1}\d t\right)^\frac{N-1}N\|V(|\cdot|)\|_{L^N(B_{r_0}(0))}  -    v(s)\log v^2(s) \right]    \d s\\
	   \leq& C\int_0^{r_0} \left(v(s)-v(s)\log v^2(s) \right)\d s.
   \end{align*}
   Then by \cite[Lemma 1.4.1]{Ag}, we have $v\equiv 0$ in $(0,r_0)$. This is a contradiction.

	  (ii) Set $v(r)=r^{-\frac{\theta }2} u(r)$ and we have
	  $$v''+\frac{N-1+\theta}rv'+\frac{\theta}{2r^2}\left(\frac{\theta}2+N-2\right)v
	  -V_\delta(r) v +\theta B_\delta v\log r +B_\delta v\log v^2=0.
	  $$
	  Let $R_1>0$ be such that $V_\delta=V$ and $B_\delta=1$ for $r\geq R_1$.
	  Set 
	  $$\mu = -\inf_{r\geq R_1}\left(V(r)-\theta \log r-\frac{\theta}{2r^2}\left(\frac{\theta}2+N-2\right)\right).
	  $$
	  Then in $[R_1,+\infty)$, $v$ satisfies 
	  \begin{equation}\label{Av}
		   v''+\frac{N-1+\theta}r v'  +v\log (e^\mu  v^2)\geq 0.
	  \end{equation}
	  Fix $\tau_1\in(\tau,\frac12)$.
	  Since, $\lim_{r\to+\infty} v=0$, we can find  $R>R_1$ be such that for $r\geq R$,
	  $$(4\tau_1^2-2\tau_1)r^2-2\tau_1(N-1+\theta)-2\tau_1<0\quad \mbox{and}\quad 4v^2(r) e^\mu<e^{-2}.
	  $$
	  Now set 
	  $$ v_0(r)= 2v(R)e^{\tau_1 R^2}e^{-\tau_1 r^2}, \quad r\geq R.
	  $$
	  We have
	  \begin{equation}\label{Av0}
		   v_0''+\frac{N-1+\theta}rv_0'  +v_0\log (e^\mu v_0^2)\leq 0.
	  \end{equation}
	  We claim that 
	  $$v(r)\leq v_0(r)\quad \mbox{for}\quad r\geq R.
	  $$
	  Assume by contradiction that \[\sup_{r\geq R}\left(v(r)-v_0(r)\right)>0. \]
	  By $v(R)<v_0(R)$ and $\lim_{r\to+\infty}(v(r)-v_0(r))=0$, there is a local 
	  maximum point $R_2\in(R,+\infty)$ of $v-v_0$
	  such that 
	  $v(R_2)>v_0(R_2)$ and $v'(R_2)-v_0'(R_2)=0$.
	  By the strict decreasing property of $s\log s$ for $s\in (0,e^{-2})$, we have
	  \[v_0(R_2)\log (e^\mu v_0^2(R_2)) > v(R_2)\log (e^\mu v^2(R_2)).\]
	  Then by \eqref{Av} and \eqref{Av0}, $v''(R_2)-v_0''(R_2)>0$.
	  This is a contradiction. Then   as $r\to+\infty$,
	  \[  u(r)e^{\tau r^2}=r^{\frac\theta2}v(r)e^{\tau r^2}\leq r^{\frac\theta2}v_0(r)e^{\tau r^2}\to 0. \qedhere\]
	\end{proof}
\begin{Rek}\label{Rek2.1}
	Modifying the proof of Proposition \ref{LemA1} (i) slightly, 
	we can get the uniqueness of 
	the following initial value problem under assumption (Vab) and (V1):
\[ 
    \left\{
    \begin{aligned}
         &u''+\frac{N-1}{r}u'-V_\delta(r)u+B_\delta(r)u\log u^2=0,\\
         &u(0)=\beta,\quad
         u'(0)=0,\quad \beta\in\R.
    \end{aligned}\right. 
    \]
	In fact,  the assumption that $V(|x|)\in L^q(B_1(0))$  ensures the uniqueness of the solution near $0$.
	On the other hand, the uniqueness in $(0,+\infty)$ follows from the uniqueness criteria for initial value problems (see \cite[Theorem 3.5.1]{Ag}). 
\end{Rek}

 \begin{proof}[Completion of the proof of Theorem \ref{Thm1}]
	 Proposition \ref{lemma2.1}(ii) gives the existence of a radial state solution and Proposition \ref{LemA1} implies  this solution is 
	 in fact positive and has the desired decay property.
  \end{proof}

\setcounter{equation}{0}
\section{Uniqueness and symmetry}\label{sec3}
In this section, we show Theorem \ref{Thm1.2} and Theorem \ref{Thm1.3}. 
\subsection{Uniqueness in the radial setting}

Assuming (Vab), (V1) and (V2), to
  show the uniqueness of
\begin{equation} \label{4.1}
	\left\{
		\begin{aligned}
			 &u''+\frac{N-1}{r}u'- (V+\delta a) u  +(1+\delta b) u\log u^2=0,\quad u>0\quad \mbox{in}\quad (0,+\infty),\\
			 &u'(0)=0,  \lim_{r\to+\infty}r^{-\frac\theta2 } u(r)= 0,
		\end{aligned}
		\right.
	\end{equation}
we set $V_\delta =V+ \delta a$, $K_\delta =B_\delta^{-1}=(1+\delta b)^{-1}$ and
 $v=K_\delta^{-\frac14} r^{\frac{N-1}2} u$ in \eqref{4.1}. Then $v$ satisfies
 \begin{align*}
	K_\delta v''+\frac{K_\delta'}{2}v'-G_\delta v +   v\log v^2=0,
\end{align*}
where 
\begin{equation}\label{Gdelta}
	G_\delta = K_\delta V_\delta
	-\frac{K_\delta''}4+\frac{3(K_\delta')^2}{16K_\delta}+\frac{(N-1)(N-3)K_\delta}{4r^2}- \frac{\log K_\delta}{2}+(N-1)\log r.
\end{equation}
Let 
\begin{equation}\label{E}
	E_\delta(r;u)=\frac12 K_\delta (v')^2 -\frac12 G_\delta(r) v^2 + \frac12(v^2\log v^2 -v^2).
\end{equation}
Then
\begin{equation}\label{E'}
	E'_\delta(r)=-\frac12 G_\delta'(r)v^2.
\end{equation}
We have
\begin{Lem}\label{lem3.1}
	Assume (Vab), (V1), (V2)  and let $u$ be a solution to \eqref{4.1}. Then 
there is $\delta_0>0$ independent of $u$, such that for $\delta\in[0,\delta_0]$,
 $E_\delta(r;u)>0$ if  $r> 0$ and $\lim_{r\to+\infty} E_\delta(r;u)=0$.
 	
\end{Lem}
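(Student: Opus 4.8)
The plan is to extract everything from the differential identity \eqref{E'}, $E_\delta'(r)=-\frac12 G_\delta'(r)v^2(r)$, where $v=K_\delta^{-1/4}r^{(N-1)/2}u$. Since $u>0$ on $(0,+\infty)$ we have $v>0$ there, so the sign of $E_\delta'$ is exactly the opposite of that of $G_\delta'$, and the first task is to transfer the structural hypothesis (V2) from $G=G_0$ to $G_\delta$, uniformly for $\delta\in[0,\delta_0]$ with a suitable $\delta_0>0$. The two key observations are: (a) because $a\in C^2_0(\R^N)$ and $b\in C^3_0(\R^N)$, one has $G_\delta\equiv G$ outside the fixed compact set $\supp a\cup\supp b$, and (b) because $b\equiv1$ near $0$, there $K_\delta\equiv(1+\delta)^{-1}$ is constant, so near the origin $G_\delta$ is an explicit modification of $G$ for which $\liminf_{r\to0^+}G_\delta'(r)>0$ (when $N=2,3$), respectively $\limsup_{r\to0^+}r^3G_\delta'(r)<0$ (when $N\ge4$), persists; on the remaining compact annulus one has $G_\delta'\to G'$ uniformly as $\delta\to0$. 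Combining (a) and (b) I would conclude: for $N=2,3$, $G_\delta'>0$ on all of $(0,+\infty)$; and for $N\ge4$, $r^3G_\delta'(r)$ still changes sign exactly once, at some $s_\delta>0$, being negative on $(0,s_\delta)$ and positive on $(s_\delta,+\infty)$. I expect this transfer — especially preserving the single sign change when $N\ge4$, where $G_\delta'$ is merely continuous — to be the main obstacle.

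Granting this, the two assertions follow directly from \eqref{E'}. For $\lim_{r\to+\infty}E_\delta(r;u)=0$: by Proposition \ref{LemA1}(ii), $u(r)e^{\tau r^2}\to0$ for each $\tau<\frac12$, hence for $r$ large (where $K_\delta=1$) $v$ decays super-exponentially, so $v$, $v^2\log v^2$ and $G_\delta v^2$ all tend to $0$ and $\int^{+\infty}|G_\delta'|v^2<+\infty$; therefore $E_\delta(r)$ has a finite limit $\ell$ as $r\to+\infty$, and then \eqref{E} forces $\frac12(v')^2\to\ell$, which gives $\ell=0$ (and $v'\to0$) because $v\to0$. For the positivity when $N=2,3$: by \eqref{E'} and $G_\delta'>0$, $E_\delta$ is strictly decreasing on $(0,+\infty)$, hence $E_\delta(r)>\lim_{\rho\to+\infty}E_\delta(\rho)=0$ for every $r>0$. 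For the positivity when $N\ge4$: by \eqref{E'}, $E_\delta$ is strictly increasing on $(0,s_\delta)$ and strictly decreasing on $(s_\delta,+\infty)$; the decreasing branch gives $E_\delta>0$ on $[s_\delta,+\infty)$ as before, and for the increasing branch it suffices to show $\liminf_{r\to0^+}E_\delta(r)\ge0$.

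To handle the endpoint $r\to0^+$ when $N\ge4$: there $K_\delta\equiv(1+\delta)^{-1}$, and by Theorem \ref{Thm1} and Proposition \ref{LemA1}(i) we have $u\in C^{2-\gamma}(\R^N)$ with $u(0)>0$ and $u'(0)=0$, so $v\sim c\,r^{(N-1)/2}$ and $v'\sim c'\,r^{(N-3)/2}\to0$; thus $v^2\log v^2\to0$, the singular part $(N-1)(N-3)/(4r^2)$ of $G_\delta$ contributes $O(r^{N-3})\to0$ to $G_\delta v^2$, and the $V$-part contributes a term that is $O(|V(r)|\,r^{N-1})\to0$ by the local integrability in (V1). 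Hence $E_\delta(r)\to0$ as $r\to0^+$, and combined with the strict monotonicity on $(0,s_\delta)$ this yields $E_\delta>0$ there as well, finishing the case $N\ge4$. Finally, since the threshold $\delta_0$ and the point $s_\delta$ produced above depend only on $V$, $a$, $b$ and $N$, the conclusion is uniform in the solution $u$, as required.
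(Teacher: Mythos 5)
Your overall skeleton is the paper's: transfer (V2) from $G$ to $G_\delta$ for small $\delta$, read off the monotonicity of $E_\delta$ from \eqref{E'}, and finish by computing the behaviour of $E_\delta$ at the two endpoints. The genuine gap lies in the endpoint limits, where you repeatedly assert pointwise smallness of the $V$-dependent terms that (V1) simply does not provide. At infinity, (V1) imposes no upper bound on $V$ at all (e.g.\ $V(r)=e^{e^r}$ is admissible), and Proposition \ref{LemA1}(ii) only gives a Gaussian bound on $u$ that is independent of how fast $V$ grows; hence ``$v$ decays super-exponentially'' does not yield $G_\delta v^2\to0$, and $\int^{+\infty}|G_\delta'|v^2<+\infty$ is likewise unjustified since $G_\delta'$ contains the uncontrolled $V'$. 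Your subsequent deduction of a finite limit $\ell$, of $v'\to0$, and of $\ell=0$ therefore has no foundation. The paper is structured precisely to avoid this: Lemma \ref{Cor2.1} gives $r^{N-1}u'\to0$ and only $\liminf_{r\to\infty}r^{N-1}V_\delta u=0$, which (together with $G_\delta\ge0$ for large $r$, from $\theta>1-N$) yields merely $\limsup_{r\to+\infty}E_\delta=0$ as in \eqref{E1}; the full limit $\lim_{r\to+\infty}E_\delta=0$ is then recovered because $E_\delta$ is monotone decreasing for large $r$.

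The same defect appears at $r\to0^+$ in your case $N\ge4$: $V(|\cdot|)\in L^q_{loc}$ with $q>N$ does not imply $r^{N-1}V(r)\to0$ pointwise (a continuous $V$ may carry thin spikes of height $\sim r^{1-N}$ near the origin and still lie in $L^q_{loc}$), so neither $E_\delta(r)\to0$ nor even $\liminf_{r\to0^+}E_\delta\ge0$ can be extracted the way you propose; for such spiky admissible $V$ one can have $\limsup_{r\to0^+}G_\delta v^2>0$. What is provable — and what the paper proves by averaging $r^{N-1}V$ over $[1/n,2/n]$ — is only $\liminf_{r\to0^+}r^{N-1}V(r)\le0$, hence $\liminf_{r\to0^+}G_\delta v^2\le0$ and $\limsup_{r\to0^+}E_\delta\ge0$ as in \eqref{E0}; this one-sided statement suffices because $E_\delta$ is increasing on $(0,r_\delta)$, so $\lim_{r\to0^+}E_\delta$ exists and is $\ge0$, giving $E_\delta>0$ on the increasing branch. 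In short, your reduction to monotonicity plus endpoint control is correct, but both endpoint steps must be replaced by the paper's one-sided $\liminf$/$\limsup$ estimates (via the averaging argument at $0$ and Lemma \ref{Cor2.1} at infinity) combined with the monotonicity of $E_\delta$ near the endpoints; as written, the claims ``$G_\delta v^2\to0$ at infinity'' and ``$|V(r)|r^{N-1}\to0$ at $0$'' would fail for admissible potentials.
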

\begin{proof}
	We write $E_\delta(r)=E_\delta(r;u)$ for brevity.
	By (V1),  there is $r_n\in [1/n, 2/n]$ for each $n$, such that 
$$ \begin{aligned}
	r_n^{N-1}V(r_n)=n\int^{\frac2n}_{\frac1n} r^{N-1}V(r)\d r\leq 
	n\left(\int^{\frac2n}_\frac1{n} r^{N-1}\d r\right)^{1-\frac1N}\left(\int_{\frac1n}^\frac2n r^{N-1}|V(r)|^N\d r\right)^\frac1N\\
	=\left(\frac{2^N-1}N\right)^{1-\frac1N} n^{2-N}\left(\int_{\frac1n}^\frac2n r^{N-1}|V(r)|^N\d r\right)^\frac1N\to
	0\quad \mbox{as}\quad n\to+\infty.
\end{aligned}
$$
Hence $\liminf_{r\to0^+} r^{N-1} V(r)\leq 0$.
Then by \eqref{Gdelta}, (Vab), Proposition \ref{LemA1} (ii) and Lemma \ref{Cor2.1}, we have
\begin{equation}\label{Gv}
	\liminf_{r\to 0^+}G_\delta(r)v^2\leq \begin{cases}\displaystyle
	   -\infty, &\text{if}\ N=2,\cr
	   0, &\text{if}\ N\geq 3.
	   \end{cases}
	   \quad\mbox{and}\quad \liminf_{r\to +\infty}G_\delta(r)v^2=0\quad \mbox{if}\  N\geq 2.
\end{equation}
On the other hand, by
 \[
	\label{E2} v'(r)=-\frac14 K_\delta^{-\frac54} K_\delta' r^{\frac{N-1}2}u+ \frac{N-1} 2K_\delta^{-\frac14} r^{\frac{N-3}2} u    +K_\delta^{-\frac14} r^{\frac{N-1}2} u',
 \] 
 \eqref{Gdelta}, (Vab), Proposition \ref{LemA1} (ii) and Lemma \ref{Cor2.1}, we can get
 \begin{equation}
	\label{E3}\lim_{r\to0^+} v'(r)=\begin{cases}
		+\infty, &\text{if}\ N=2,\crcr
		(1+\delta)^\frac14u(0), &\text{if } N=3,\crcr
		0,&\text{if}\  N\geq 4.\crcr
		\end{cases}
		\quad \mbox{and}\quad \lim_{r\to+\infty} v'(r)=0.
 \end{equation}
 By \eqref{E}, \eqref{Gv} and \eqref{E3}, we get
\begin{equation}
	\label{E0}\limsup_{r\to0^+}E_\delta(r)\geq \begin{cases}\displaystyle
        +\infty, &\text{if}\ N=2,\cr
		\half (1+\delta)^{-\frac12} (u(0))^2, &\text{if}\ N=3,\cr
		0, &\text{if}\ N\geq 4.
		\end{cases}
\end{equation} 
and
\begin{equation}
	\label{E1}\limsup_{r\to+\infty}E_\delta(r)=0.
\end{equation} 

\noindent
{\bf Case $N=2$ or $3$}

In this case, we show that there is $\delta>0$ such that for $\delta\in[0,\delta_0]$,
 $E_\delta$ is strictly decreasing in $(0,+\infty)$. 
 By \eqref{E0} and \eqref{E1}, it suffices to show that $E_\delta'(r)\neq 0$ for each $r>0$.
 Arguing indirectly,
assume  there are $\delta_n\to 0$ and $r_n>0$ such that $E_{\delta_n}'(r_n)=0$.
By \eqref{E'}, we have $G_{\delta_n}'(r_n)=0$. By (Vab) and (V2), $r_n$ is bounded.
Otherwise, along a subsequence, $G'(r_n)=G_{\delta_n}'(r_n)=0$. This is a contradiction.
On the other hand, by (Vab) and (V2),  there is $\rho_1>0$ independent of $n$ such that
$G_{\delta_n}'(r)>0$ for  $r\in(0,\rho_1)$.
Then we may assume that $r_n \rightarrow r_{0} \geq \rho_1>0$. This implies $G^{\prime}(r_0)=0$ and it is a contradiction. 
 
\noindent
{\bf Case $N\geq 4$}

Let   $r_0>0$ be the unique simple zero of $r^3 G'(r)$. 
By $\limsup_{r\to0^+}r^3 G'(r)<0$,
 we have $r^3 G'(r)<0$   in $(0, r_0)$, $r^3 G'(r)>0$   in $(r_0, \infty)$ and $(r^3 G'(r))'|_{r=r_0}>0$. 
 Note also that $r^3G_\delta'(r)=r^3G'(r)$ for large $r$.
 Then, from (Vab) and (V2),  there is $\delta_0>0$
  such that 
 for each $\delta\in[0,\delta_0]$,
 $r^3G_\delta'(r)$ has a unique simple zero $r_\delta$ satisfying $\lim _{\delta \rightarrow 0} r_{\delta}=r_0$,
 $G_\delta'(r)<0$  for $0<r<r_\delta$ and $G_\delta'(r)>0$ {for} $r>r_\delta$.
 Hence $E_{\delta}$ is strictly increasing in $\left(0, r_{\delta}\right)$ and
strictly decreasing in $\left(r_{\delta}, \infty\right)$.
This proves the lemma for $N\geq 4$.

We remark that in  either case ($N=2,3$ or $N=4$), the fact that $E_\delta(r)$ is strictly decreasing
for large $r$ implies that $\lim_{r\to+\infty}E_\delta(r)=0$.
\end{proof}
Now we are ready to prove the uniqueness of \eqref{4.1}.
\begin{proof}[Proof of Theorem \ref{Thm1.2}]
Assume by contradiction that for some $\delta\in[0,\delta_0]$, 
 problem \eqref{4.1} has two solutions $u_1$, $u_2$
such that $0<u_1(0)<u_2(0)$. By Lemma \ref{prop4.1},
we may assume that $u_2$ intersects $u_1$ only once. Then  by Lemma \ref{lem4.2},
we can get, in $(0,+\infty)$,
\begin{align*}
&\frac{\d}{\d r}\left(\frac{u_1}{u_2}\right)>0,\quad \text{or equivalently},\quad
\frac{\d}{\d r}\left(\frac{v_2}{v_1}\right)^2=
\frac{\d}{\d r}\left(\frac{u_2}{u_1}\right)^2<0,
\end{align*}
where $v_i=K_\delta^{-\frac14} r^{\frac{N-1}2} u_i$, $i=1,2$.
We can also see
 \begin{equation}
	\label{A17}
	0<\left(\frac{v_2(r)}{v_1(r)}\right)^2=\left(\frac{u_2(r)}{u_1(r)}\right)^2 <\left(\frac{u_2(0)}{u_1(0)}\right)^2\quad\mbox{for}\quad r>0. 
 \end{equation}
 Setting $E_{i}(r)=E_\delta(r; u_i)$, $i=1,2$,
it follows from \eqref{E} and \eqref{E'} that
\begin{align}\label{eq3.13}
\frac{\d}{\d r}\left(\left(\frac{v_2}{v_1}\right)^2E_{1}-E_{2}\right)
= \left(\frac{v_2}{v_1}\right)^2\frac{\d E_{1}}{\d r}-\frac{\d E_{2}}{\d r}
+E_{1}\frac{\d}{\d r}\left(\frac{v_2}{v_1}\right)^2
=E_{1}\frac{\d}{\d r}\left(\frac{u_2}{u_1}\right)^2 <0,
\end{align}
and
\begin{align*}
\left(\frac{v_2}{v_1}\right)^2E_{1}-E_{2}
=&\left(\frac{v_2}{v_1}\right)^2\left(\half K_\delta (v_1')^2-\half G_\delta(r)v_1^2+\half v_1^2(\log v_1^2-1)\right)\\
&\qquad \qquad-\left(\half K_\delta (v_2')^2-\half G_\delta(r)v_2^2+\half v_2^2(\log v_2^2-1)\right)\\
=&\half\left(K_\delta\left(\frac{v_2}{v_1}\right)^2(v_1')^2-K_\delta(v_2')^2+v_2^2\log v_1^2-v_2^2\log v_2^2\right).
\end{align*}
Let $(0, r_1)$ be an nonempty interval where $K_\delta=(1+\delta)^{-1}$.
By \eqref{E2}, we can check,
\begin{align*}
\left(\frac{v_2}{v_1}\right)^2(v_1')^2-(v_2')^2
=K_\delta^{-\frac12}r^{N-2}\left(\frac{u_2 u_1'}{u_1}-u_2'\right)\left((N-1)u_2+\frac{ru_2 u_1'}{u_1}+r u_2'\right)\to 0, \quad\mbox{as}\ r\to0.
\end{align*}
As a result, 
\begin{equation}\label{eq3.14}
	\lim_{r\to 0^+}\left(\left(\frac{v_2}{v_1}\right)^2E_{1}-E_{2}\right)=
	\half v_2(0)^2\log\left(\frac{u_1(0)}{u_2(0)}\right)^2 
	 =0,
\end{equation}
By Lemma \ref{lem3.1} and \eqref{A17}
we obtain that
\begin{equation}\label{eq3.15}
	\lim_{r\to+\infty}\left(\left(\frac{v_2(r)}{v_1(r)}\right)^2E_1(r)-E_2(r)\right)=0.
\end{equation}
Then we arrive at a contradiction by \eqref{eq3.13}, \eqref{eq3.14} and \eqref{eq3.15}.
\end{proof}
\subsection{Radial symmetry}

 Assume (V3) and
	let $u\in C(\R^N)\cap C^2(\R^N\setminus\{0\})$ solves
\eqref{P2}.
By means of moving plane method (see \cite{GNN}),
we show that $u$ is radially symmetric about $0$ and $u'(r)<0$ for each $r>0$.
\begin{proof}[Proof of Theorem \ref{Thm1.3}]

Denote $x=(x_1,x')\in\R\times\R^{N-1}$, and   for  $\lambda\in\R$,
 set 
$$\Sigma_\lambda=\set{x\in\R^N|x_1<\lambda},
\quad x_\lambda=(2\lambda-x_1,x'),\quad u_\lambda(x)=u(x_\lambda)\quad\mbox{and}\quad U_\lambda=u_\lambda-u.
$$
Then in $\Sigma_{\lambda}$, we have
\begin{equation}
	\label{wl}-\Delta U_\lambda+V(|x|) U_\lambda
	=(V(|x|)-V(|x_\lambda|)) u_\lambda
	+u_\lambda\log u_\lambda^2-u\log u^2. 
\end{equation}

\noindent
\textbf{Step 1.} 
Take 
$R>1$
 such that $u(x)< \min\{e^{-1},u(0)\}$ if $|x|\geq R$.
 We show that 
 $U_\lambda \geq 0$ in $\Sigma_\lambda\setminus B_R(0)$ for each $\lambda\leq0$.

Otherwise, since $U_\lambda(x)\to 0$ as $\abs x\to+\infty$ and $U_\lambda|_{\partial \Sigma_\lambda}=0$, 
we assume $U_\lambda$ reaches its negative minimum at some $\hat x\in\Sigma_\lambda\setminus B_R(0)$. We note that
$\hat x,\hat x_\lambda\neq 0$ by the choice of $R$. 
So $U_\lambda$ is $C^2$ near $\hat x$.
Noting that $s\log s^2$ is strictly decreasing in $(0,e^{-1})$, 
 we have $u_\lambda\log u_\lambda^2-u\log u^2>0$ at $\hat x$. 
 Since we can assume without loss of generality that $V(r)\geq 1$ for $r\geq 1$,
we have $$\Delta U_\lambda(\hat x)<V(|\hat x|) U_\lambda(\hat x)
+(V(|\hat x_\lambda|)-V(|\hat x|)) u_\lambda(\hat x)\leq0.$$ 
This is a contradiction since $\hat x$ is the minimum point of $U_\lambda$.

\noindent
\textbf{Step 2.}
Set $\lambda_0=\sup\set{\lambda<0 | U_{\lambda'}\geq 0\ \text{in}\ \Sigma_{\lambda'} \ \text{for any}\ \lambda'\in(-\infty,\lambda]}$.
Step 1 implies that 
$U_{\lambda}\geq 0$ in $\Sigma_\lambda$ for each $\lambda\leq -R$ and hence $\lambda_0\geq -R$.
 We   claim that 
$U_{\lambda}>0$ in $\Sigma_{\lambda}$ for $\lambda\leq\lambda_0$.
In fact,   by \eqref{wl}, there holds
\begin{equation}\label{w}
	\begin{aligned}
		-\Delta U_\lambda +  U_\lambda(V(|x|) -\log U_\lambda^2)=(V(|x|)-V(|x_\lambda|))u_\lambda +\int_{u}^{u_\lambda}(\log s^2-\log(s-u)^2)\d s\geq 0, 
	\end{aligned}
\end{equation}
where $V(|x|) -\log U_\lambda^2$ is bounded from below in $\Sigma_\lambda$. By maximum principle (\cite{CL,GT}), 
either $U_{\lambda}\equiv0$ or 
$U_{\lambda}>0$ in $\Sigma_{\lambda}$. 
Since $V'(r)\not\equiv0$, we have $U_\lambda >0$.

\noindent
\textbf{Step 3.}  We prove $\lambda_0=0$.
Assume by contradiction that $\lambda_0<0$,
we prove  that there exists $\delta_0>0$ such that for any $\delta\in(0,\delta_0]$
$$U_{\lambda_0+\delta}\geq0 \quad \text{in}\quad \Sigma_{\lambda_0+\delta}.
$$
Arguing by contradiction, for $\e_n\to 0^+$, assume that $x^n\in \Sigma_{\lambda_0+\e_n}$ attains the negative minimum of $U_{\lambda_0+\e_n}$.
  We note that by Step 1, $\abs{x^n}\leq R$ for all $i$. 
We assume along a subsequence, $x^n\to x^0$.
Then
$$U_{\lambda_0}(x^0)\leq0,\quad \nabla U_{\lambda_0}(x^0)=0,
$$
 which implies $x^0\in\partial\Sigma_{\lambda_0}$.
 By \eqref{w} and Hopf Lemma (\cite{CL,GT}), we get a contradiction
 $$\dfrac{\partial U_{\lambda_0}(x^0)}{\partial x_1}<0.
 $$

 Now we have shown that $u_\lambda \leq u$ and $\frac{\partial u}{\partial x_1}>0$ in 
 $\Sigma_0$ by Step 3. 
 Then we can complete the proof since similar arguments hold for any direction in $\R^N$.
\end{proof}
\setcounter{equation}{0}
\section{Nondegeneracy}\label{sec4}
Assume (V1), (V2).
Let $w>0$ be the unique radial positive solution to \eqref{P1}.
If $\psi\in H^1(\R^N)$  weakly solves
\begin{equation}\label{eqpsi}
	-\Delta \psi+V(|x|) \psi=(\log w^2+2)\psi,
\end{equation}
then  we have
\begin{align*}
 	\int_{\R^N}|\nabla \psi|^2+\int_{\R^N}\left((V-\alpha_0\log |x|+\mu_0)+(\log |x|^{-\alpha_0}w^2)^-\right)\psi^2 
	= \int_{\R^N}  ((\log |x|^{-\alpha_0}w^2)^+ +2+\mu_0) \psi^2,
\end{align*}
where $\mu_0>0$ is the constant determined in Lemma \ref{Lem21} for $\delta=0$.
Noting that $|x|^{-\alpha_0}w\to 0$ as $|x|\to+\infty$ by Proposition \ref{LemA1} (ii), we have $ \int_{\R^N}  ((\log |x|^{-\alpha_0}w^2)^+<+\infty.$
Then by Lemma \ref{Lem21},
we can conclude that $\psi\in E$.
Choosing $R>0$ such that $|x|^{-\alpha_0}w^2 <w<1$ when $|x|\geq R$. 
Then,
$$\int_{\R^N} (\log w)^-\psi^2=-\int_{\R^N\setminus B_R(0)} (\log w)\psi^2
	<-\int_{\R^N\setminus B_R(0)}  (\log |x|^{-\alpha_0}w^2)\psi^2=\int_{\R^N}  (\log |x|^{-\alpha_0}w^2)^-\psi^2.$$
Therefore, $\int_{\R^N} (\log w)^-\psi^2<+\infty$. By $ \int_{\R^N}  ((\log |x|^{-\alpha_0}w^2)^+<+\infty$ and  \eqref{eqpsi}
\begin{equation}\label{eq41}
\int_{\R^N} |\log w|\psi^2<+\infty\quad \mbox{and}\quad \int_{\R^N}|V|\psi^2<+\infty.
\end{equation}
	
To achieve the nondegeneracy, we first show that $\psi=0$ if $\psi\in E_r$ and next show 
$\psi\in E_r$ if we further assume   (V3).
\subsection{Nondegeneracy in the radial setting}\label{sec4.1}

\begin{proof}[Proof of $\psi=0$ if $\psi\in E_r$]
	First note that
	\begin{equation}\label{eq4.1}
		 0=\int_{\R^N} (\nabla \psi \nabla w +V\psi w -\psi w\log w^2 -2\psi w)=-2\int_{\R^N}\psi w.
	\end{equation}
	

	Assume $\psi\in E_r\setminus\{0\}$.
	We can  choose a nonnegative radial function $b\in C_0^\infty(\R^N)$ such that $b=1$ in $B_{r_1}(0)$ in a neighborhood of $0$
	such that 
	$$\int_{\R^N} b\psi^2>0.
	$$
	Set $a= b\log w^2$.
	By Theorem \ref{Thm1},
	 there is $\delta>0$  such that $w$ is the unique solution to
	\begin{equation}\label{eq411}
		-\Delta u + V_\delta(|x|) u=B_\delta(|x|) u\log u^2,
	\end{equation}
	where $V_\delta =V +\delta a=V+\delta b\log w^2$ and  $B_\delta =1+\delta b$.
We may also assume that $\delta$ is chosen small enough such that
\begin{equation}\label{eq4.2}
	\int_{\R^N} b \psi^2-\frac{\delta(\int_{\R^N}bw\psi)^2}{\int_{\R^N}B_\delta w^2}>0.
\end{equation}
By Proposition \ref{lemma2.1},
\begin{equation}\label{eq4.3}
	2I_\delta(w)=2\inf_{u\in\mathcal N_r^\delta} I_\delta(u)=\inf_{u\in\mathcal N_r^\delta} (2I_\delta(u)- J_\delta(u))= \inf_{u\in\mathcal N_r^\delta} \int_{\R^N}B_\delta u^2=\int_{\R^N} B_\delta w^2.
\end{equation}
Take a radial function 
$\eta\in C^\infty_0(B_{1}(0);[0,1])$   such that $\eta=1$ in  
	$B_{1/2}(0)$ and $|\nabla \eta|\leq 4$.
	For each  $n\geq 1$, we denote $\psi_n=\eta(n^{-1}{\cdot})\psi$.
	Noting that $\psi\in  C^1(\R^N)\cap C^2(\R^N\setminus\{0\})$ by regularity, we can find $s_n>0$ such that $w+s\psi_n>0$ for each $s\in (-s_n,s_n)$.
	Then the map $s\mapsto \int_{\R^N}(w+s \psi_n)^2\log (w+s\psi_n)^2$ is  $C^2$-continuous   
	for $s\in (-s_n,s_n)$.
For each $s\in (-s_n,s_n)$, denote  
$$  P_n(s):=J_\delta(w+s\psi_n),\quad Q_n(s):=\int_{\R^N} B_\delta(w+s\psi_n)^2  \quad \text{and}\quad t_n(s):= \frac{P_n(s)}{Q_n(s)}.
$$
Then by Proposition \ref{lemma2.1} (i),  we have 
 $$e^{\frac{t_n(s)}2}(w+s\psi_n)\in \mathcal N_r^\delta\quad \mbox{for}\quad s\in (-s_n,s_n).$$
 Let us check by \eqref{eqpsi},\eqref{eq41},\eqref{eq4.1} and \eqref{eq411},
\begin{equation}\label{eq4.4}
	\left.\begin{aligned}
		P_n'(0)&=2\int_{\R^N}(\nabla w\nabla \psi_n +V_\delta w\psi_n-B_\delta w\psi_n\log w^2-B_\delta w\psi_n)
		\to-2 \delta\int_{\R^N} b w\psi,\\
		 P_n''(0)&=2\int_{\R^N}( |\nabla \psi_n|^2 +V_\delta \psi_n^2-B_\delta \psi_n^2\log w^2-3B_\delta \psi_n^2)\to
		 -4\delta\int_{\R^N} b \psi^2-2\int_{\R^N}B_\delta \psi^2,\\
		Q_n'(0)&=2\int_{\R^N}B_\delta w\psi_n\to 2\int_{\R^N}\delta b w\psi,
		 \quad  Q_n''(0)\to 2\int_{\R^N}B_\delta  \psi^2,
		\end{aligned}\right\}
\end{equation}
 and   
\begin{equation}\label{eq4.5}
	 t_n(0)=0,\quad t_n'(0)Q_n(0)=  P_n'(0),
	  \quad t_n''(0)Q_n(0)+2t_n'(0)Q_n'(0)=P_n''(0).
\end{equation}
 Then we have by \eqref{eq4.4},\eqref{eq4.5} and \eqref{eq4.2},
 \begin{align*}
	 \left.\frac{\d^2}{\d s^2}\right|_{s=0}\left(e^{t_n(s)}Q_n(s)\right)&=
	 t_n''(0) Q_n(0) 
	+2t_n'(0)Q_n'(0)+(t_n'(0))^2 Q_n(s) + Q_n''(0)\\
	&=P_n''(0)+\frac{(P_n'(0))^2}{Q_n(0)}+Q_n''(0)\\
	&\to -4\delta \left(\int_{\R^N} b \psi^2-\frac{\delta(\int_{\R^N}bw\psi)^2}{\int_{\R^N}B_\delta w^2}\right)<0.
 \end{align*}
 This is a contradiction
because the function 
$e^{t_n(s)}Q_n(s)$, $s\in(-s_n,s_n)$ attains its minimum at $s=0$ by \eqref{eq4.3}.
\end{proof}

\subsection{Nondegeneracy in {$H^1(\R^N)$} }
In this subsection, we assume (V1), (V2) and  (V3).
It suffices to show the following result to obtain $\psi=0$ in $H^1(\R^N)$.
\begin{Lem} Let  $w$ be the unique solution to 
\eqref{P1}.
And $\psi\in H^1(\R^N)$ satisfies 
$$-\Delta \psi+V(|x|) \psi=(\log w^2+2)\psi.
$$
Then $\psi(x)=\psi(\abs x)$.

\end{Lem}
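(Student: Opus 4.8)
The plan is to exploit the radial symmetry of $w$ together with a second application of the moving plane method, this time to the linear equation satisfied by $\psi$. Since $w$ is radial (by Theorem~\ref{Thm1.3} and the subsequent corollary), the coefficient $\log w^2 + 2$ in the equation for $\psi$ is a radial function; I will show that $\psi$ inherits this symmetry. However, unlike the nonlinear equation for $w$, here one does not have a sign condition on $\psi$, so the naive moving plane argument does not apply directly. The standard fix is to decompose $\psi$ into its symmetric and antisymmetric parts with respect to an arbitrary hyperplane and show the antisymmetric part vanishes; equivalently, for a fixed direction (say $x_1$) and for each $\lambda\in\R$ one studies $\Psi_\lambda(x) := \psi(x_\lambda) - \psi(x)$, which solves a linear equation with a potential that is \emph{not} necessarily sign-definite, so one must instead argue via a Wronskian-type identity or via the observation that $w_{x_1}$ (the derivative of $w$ in the $x_1$-direction) is a known solution of the linearized equation in a half-space.

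The key steps, in order, would be: (1) Record that $w$ is radial and strictly decreasing, so $w' (r) < 0$ for $r>0$ by Theorem~\ref{Thm1.3}; consequently $\partial_{x_1} w$ does not change sign in each half-space $\{x_1 > 0\}$ and $\{x_1 < 0\}$, and it solves $-\Delta(\partial_{x_1} w) + V(|x|)\,\partial_{x_1} w = (\log w^2 + 2)\,\partial_{x_1} w$ away from the origin (differentiating the equation for $w$, using $V\in C^1$). (2) Fix a direction $e_1$ and consider the function $\zeta := \psi_{x_1}$-type antisymmetrization, i.e. set $\zeta(x) = \psi(x) - \psi(x_1', -\text{(reflected)})$... more precisely, for the reflection $x \mapsto \bar{x} = (-x_1, x')$ set $\zeta = \psi - \psi(\bar{\cdot})$; then $\zeta$ is antisymmetric, lies in $H^1(\R^N)$, and solves the same linear equation $-\Delta\zeta + V(|x|)\zeta = (\log w^2+2)\zeta$ since the coefficients are radial hence even in $x_1$. (3) Test this equation against $\zeta$ over the half-space $\Sigma_0 = \{x_1 < 0\}$, using that $\zeta = 0$ on $\partial\Sigma_0$; this gives $\int_{\Sigma_0}(|\nabla\zeta|^2 + V\zeta^2) = \int_{\Sigma_0}(\log w^2 + 2)\zeta^2$. (4) Now invoke the fact that the first eigenvalue of the operator $-\Delta + V(|x|) - (\log w^2 + 2)$ on $\Sigma_0$ with Dirichlet boundary conditions is \emph{nonnegative} — this is precisely the content one gets from the existence of the positive-in-the-half-space supersolution $\partial_{x_1} w$ (positive, say, in $\{x_1<0\}$ where $w$ increases toward the axis, up to a sign), by the standard maximum-principle characterization of the principal eigenvalue (cf.\ the Berestycki--Nirenberg--Varadhan-type argument). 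Combined with step (3) this forces $\zeta \equiv 0$ in $\Sigma_0$, hence everywhere; since $e_1$ was arbitrary, $\psi$ is radial.

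The main obstacle I anticipate is step (4): justifying that the Dirichlet principal eigenvalue of the linearized operator on the half-space is nonnegative. The clean route is to observe that $\phi := \partial_{x_1} w$ has a fixed sign in $\Sigma_0$ (strictly positive there, since $w$ is radial and strictly decreasing in $|x|$, so moving toward the axis $x_1 = 0$ from $x_1 < 0$ increases $w$), vanishes on $\partial\Sigma_0$, decays at infinity, and is a genuine solution of the linearized equation in $\Sigma_0 \setminus \{0\}$; the singular point $0$ lies on $\partial\Sigma_0$ and needs a small separate argument (a capacity/removable-singularity estimate, using that $w\in C^{2-\gamma}$ near $0$ from Theorem~\ref{Thm1}, so $\partial_{x_1} w$ is at worst mildly singular and the relevant integrals converge). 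Given such a positive solution, the substitution $\zeta = \phi\, \xi$ yields, after integration by parts, $\int_{\Sigma_0}\phi^2 |\nabla\xi|^2 \le 0$, whence $\xi$ is constant and, because $\zeta$ is antisymmetric while $\phi$ has fixed sign, $\zeta\equiv 0$. Care must be taken that all boundary terms at $x_1 = 0$, at infinity, and near $0$ vanish — the decay estimates of Proposition~\ref{LemA1}(ii) for $w$ (and the induced exponential decay of $\psi$, which follows from \eqref{eq41} and elliptic estimates) handle the behavior at infinity, and the $C^{2-\gamma}$ regularity handles the origin.
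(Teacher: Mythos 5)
Your overall plan (pass to the antisymmetric part $\zeta=\psi-\psi\circ$reflection and compare it with $\partial_{x_1}w$ on a half-space) is close in spirit to the paper's proof, which also works with the antisymmetric difference and $\partial_{x_1}w$, but there is a decisive error at the start that propagates to the end. In step (1) you assert that differentiating the equation for $w$ shows that $\partial_{x_1}w$ solves the linearized equation $-\Delta\phi+V(|x|)\phi=(\log w^2+2)\phi$. It does not: since $V$ is not constant, one gets
\begin{equation*}
-\Delta \,\partial_{x_1}w+V(|x|)\,\partial_{x_1}w+V'(|x|)\frac{x_1}{|x|}\,w=(\log w^2+2)\,\partial_{x_1}w ,
\end{equation*}
so $\partial_{x_1}w$ is an exact solution only for constant $V$, and it is a (strict) supersolution on $\Sigma_0=\{x_1<0\}$ only because $V'\geq 0$ and $V'\not\equiv 0$, i.e.\ because of (V3). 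Your proposal never records this extra term and never invokes (V3), and this is not cosmetic: if $\partial_{x_1}w$ really were a solution of the linearized equation it would itself be a nontrivial nonradial kernel element in $H^1(\R^N)$ (it has Gaussian decay), and the lemma would be false --- exactly what happens for constant potentials, where translations produce such kernel elements. Any proof of this lemma must therefore use $V'\geq0$, $V'\not\equiv0$ at the crucial step, and yours, as written, does not.

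Relatedly, your concluding step is a non sequitur. After the substitution $\zeta=\phi\,\xi$ gives $\int_{\Sigma_0}\phi^2|\nabla\xi|^2\leq 0$ and hence $\xi\equiv c$, you conclude $\zeta\equiv 0$ ``because $\zeta$ is antisymmetric while $\phi$ has fixed sign''; but antisymmetry imposes no constraint inside $\Sigma_0$, and $\zeta=c\,\partial_{x_1}w$ on $\Sigma_0$, extended oddly, is perfectly antisymmetric. What excludes $c\neq0$ is precisely the strict supersolution term you dropped: carrying it through the identity one obtains (modulo the boundary terms at $\{x_1=0\}$, at infinity, and at the origin, which sits on $\partial\Sigma_0$ and does need the care you indicate)
\begin{equation*}
0=\int_{\Sigma_0}\phi^2|\nabla\xi|^2+\int_{\Sigma_0}V'(|x|)\frac{|x_1|}{|x|}\,w\,\phi\,\xi^2 ,
\end{equation*}
and with $\xi\equiv c$ the second integral equals $c^2\int_{\Sigma_0}V'(|x|)\frac{|x_1|}{|x|}\,w\,\partial_{x_1}w$, which is strictly positive unless $c=0$, since $V'\geq0$, $V'\not\equiv0$ and $\partial_{x_1}w>0$ in $\Sigma_0$ (by Theorem \ref{Thm1.3}). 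With this correction your quadratic-form route becomes a viable variant of the paper's pointwise argument (the paper instead integrates the Wronskian-type identity with the extra $V'$ term over superlevel sets of the antisymmetric part, using Sard's theorem and the Hopf lemma); but as proposed, the key mechanism coming from (V3) is missing, so the argument does not close.
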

\begin{proof}
The proof is similar to \cite[Lemma A.4]{BO} but more subtle due to lack of regularity.
If $\psi$ is not radially symmetric, we assume without loss of generality that
$$\phi(x):=\psi(x_1,x_2,\cdots,x_N)-\psi(-x_1,x_2,\cdots,x_N)\not\equiv 0,
$$where $x=(x_1,x_2,\cdots,x_N)\in\R^N$. 
We note that
\begin{equation}\label{A19}
	 -\Delta \phi+V(|x|)\phi=(\log w^2+2)\phi.
\end{equation}
So by the regularity theory, 
$\phi\in  C^1(\R^N)\cap C^2(\R^N\setminus\{0\})$.
Let $\Omega$ be a connected component of 
$\set{x\in\R^N | \phi(x)>0}$.
 Since $\phi(x)=0$ when $x_1=0$.
We may assume $$\Omega\subset \set{x=(x_1,x_2,\cdots,x_N)|\ x_1>0}.$$
For $\e>0$, set $\Omega_{\e}:=\set{x \in \Omega \mid \phi(x)>\e}$, which is a bounded subset
of $\Omega$.
 By Sard's theorem, there exists $\e_{m}>0$ with $\lim _{m \rightarrow \infty} \e_{m}=0$ such that $\left\{\e_{m}\right\}_{m=1}^{\infty}$ are regular values of $\phi .$ Note that
\begin{equation}\label{A20}
	-\Delta \frac{\partial w}{\partial x_{1}}+V(|x|) \frac{\partial w}{\partial x_{1}}+V'(|x|)\frac{x_1}{|x|} w= (\log w^2+2)\frac{\partial w}{\partial x_{1}}
\end{equation}
By \eqref{A19} and \eqref{A20}, we have
$$\phi\Delta \frac{\partial w}{\partial x_{1}}-\frac{\partial w}{\partial x_{1}}\Delta \phi
-V'(|x|)\frac{x_1}{|x|} w\phi=0.
$$
 Integrating by parts on $\Omega_{\e_{m}}$, we obtain
\begin{equation}\label{A12}
	 \int_{\partial \Omega_{\e_{m}}} \frac{\partial^{2} w}{\partial x_{1} \partial \nu} \phi-\int_{\Omega_{\e_{m}}} V'(|x|)\frac{x_1}{|x|} w\phi= \int_{\partial \Omega_{\e_{m}}} \frac{\partial \phi}{\partial \nu} \frac{\partial w}{\partial x_{1}},
\end{equation}
 where $\nu$ denotes the outward unit vector normal to $\partial \Omega_{\e_{m}}.$  
By $\supp \left(V(|x|)-\log w^2-2\right)^-\subset B_R(0)$ for some $R>0$ and $\frac{\partial w}{\partial x_1}<0$ in $\set{x|x_1>0}$,
 we can check that 
\begin{equation}\label{A13}
	\begin{aligned}
		 &\int_{\partial \Omega_{\e_{m}}} \frac{\partial^{2} w}{\partial x_{1} \partial \nu} \phi
		-\int_{\Omega_{\e_{m}}} V'(|x|)\frac{x_1}{|x|} w\phi\\
		=&\e_m \int_{  \Omega_{\e_{m}}}\Delta \frac{\partial w}{\partial x_{1}}-\int_{\Omega_{\e_{m}}} V'(|x|)\frac{x_1}{|x|} w\phi\\
		=&\e_m \int_{\Omega_{\e_{m}}}\left(V(|x|)-\log w^2-2\right)\frac{\partial w}{\partial x_{1}}-\int_{\Omega_{\e_{m}}}V'(|x|)\frac{x_1}{|x|}(\phi-\e_m) w\\
		\leq&\e_m \int_{B_R(0)}\left(V(|x|)-\log w^2-2\right)^-\left|\frac{\partial w}{\partial x_{1}}\right|-\int_{\Omega_{\e_{m}}}V'(|x|)\frac{x_1}{|x|}(\phi-\e_m) w\\
		=&C\e_m-\int_{\Omega_{\e_{m}}}V'(|x|)\frac{x_1}{|x|}(\phi-\e_m) w,
	\end{aligned}
\end{equation}
where $C$ is a constant independent of $m$.
Therefore, by $V'(r)\geq 0$, we have 
$$\limsup_{m\to\infty}\left(\int_{\partial \Omega_{\e_{m}}} \frac{\partial^{2} w}{\partial x_{1} \partial \nu} \phi
-\int_{\Omega_{\e_{m}}} V'(|x|)\frac{x_1}{|x|} w\phi\right)\leq 0.
$$
 We next claim that 
 $\partial \Omega=\set{x | x_{1}=0}.$
 Otherwise, 
  $\partial \Omega \cap \set{x|x_1>0}\neq\emptyset$. On $\partial \Omega \cap \set{x|x_1>0}$, we have 
 $\frac{\partial w}{\partial x_{1}}<0$ and, by the Hopf lemma,
 $\frac{\partial \phi}{\partial \nu}<0$.
 We can check  that 
 $$\liminf_{m\to\infty}\int_{\partial \Omega_{\e_{m}}} \frac{\partial \phi}{\partial \nu} \frac{\partial w}{\partial x_{1}}\geq \int_{\partial \Omega \cap \set{x|x_1>0}} \frac{\partial \phi}{\partial \nu} \frac{\partial w}{\partial x_{1}}>0,
 $$
 which is a contradiction.
 Then $\partial \Omega=\set{x | x_{1}=0}$, and thus $\Omega=\set{x|x_1>0}$.
 By \eqref{A12} and \eqref{A13},
 we have
 $$\limsup_{m\to\infty}\int_{\Omega_{\e_{m}}}V'(|x|)\frac{x_1}{|x|}(\phi-\e_m) w\leq
  -\liminf_{m\to\infty}\int_{\partial \Omega_{\e_{m}}} \frac{\partial \phi}{\partial \nu} \frac{\partial w}{\partial x_{1}}\leq0.
 $$
 This is impossible since $V'\not\equiv 0$ by (V3).
\end{proof}

\setcounter{equation}{0}
\section{Proof of Theorem \ref{Thm1.5}}\label{sec5}
Through out this section, we assume $N\geq 2$,  $\alpha>1-N$ and $\sigma\in(0,{2}/{(N-2)^+})$. We first consider the existence and uniqueness of positive radial solution to
\begin{equation}\label{eq5.1}
	-\Delta u + |x|^{\alpha\sigma} u =|u|^{2\sigma}u,\quad u\in H^1(\R^N).
\end{equation}
To this end, we need   a suitable function space. For any $\phi\in C_0^\infty(\R^N)$, define  
$$\|\phi\|_\sigma=\left(\int_{\R^N}|\nabla \phi|^2+|x|^{\alpha\sigma} \phi^2\right)^\half,
$$
and let $X^\sigma_r$ be the completion of  $ \set{\phi\in C_0^\infty(\R^N) |\phi(x)=\phi(|x|)} $ with respect to $\|\cdot\|_\sigma$. 
Note that $X_r^\sigma\subset H_r^1(\R^N)$ when $\alpha\geq 0$ and $X_r^\sigma\supset H_r^1(\R^N)$
when $\alpha<0$.
We have
\begin{Lem}\label{Lem5.1}
	Assume  \eqref{eq1.13}.
	There is $C_N>0$ independent of $\sigma$ such that
	for any $u\in X_r^\sigma$, 
	$$|u(x)|\leq C_N \|u\|_\sigma |x|^{\frac{1-N}2-\frac{\alpha\sigma}4},\quad |x|\neq 0.
	$$
Moreover, for each $p\in[2\sigma+2,2^*)$, $X_r^\sigma$ embeds to $L^{p}(\R^N)$ compactly.
 
\end{Lem}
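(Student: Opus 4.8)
The whole statement is bootstrapped from the pointwise decay bound, which I would establish first for smooth radial functions. Let $u\in C_0^\infty(\R^N)$ be radial, write $u=u(r)$, and set $\kappa:=\frac{N-1}{2}+\frac{\alpha\sigma}{4}$, so that the exponent in the claim is $-\kappa$ and $2\kappa=N-1+\frac{\alpha\sigma}{2}$. Since \eqref{eq1.13} forces $-\alpha\sigma<2(N-1)$, we have $\kappa>0$. Put $h(r)=r^{2\kappa}u(r)^2$, which vanishes for large $r$, so $h(r)=\int_r^\infty(-h'(s))\,\d s$. Now
$$-h'(s)=-2\kappa s^{2\kappa-1}u^2-2s^{2\kappa}uu'\leq 2s^{2\kappa}|u||u'|=2\big(s^{\frac{N-1}2}|u'|\big)\big(s^{\frac{N-1}2+\frac{\alpha\sigma}2}|u|\big)\leq s^{N-1}(u')^2+s^{N-1+\alpha\sigma}u^2,$$
using $\kappa>0$ to drop the first term and $2ab\leq a^2+b^2$. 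Integrating from $r$ to $\infty$ and passing to $(0,\infty)$, $h(r)\leq\int_0^\infty\big(s^{N-1}(u')^2+s^{N-1+\alpha\sigma}u^2\big)\d s=\omega_{N-1}^{-1}\|u\|_\sigma^2$, which is the stated inequality with $C_N=\omega_{N-1}^{-1/2}$, independent of $\sigma$. To pass to $X_r^\sigma$ I would apply this to differences $u_j-u_k$ of an approximating sequence: the bound shows $(u_j)$ is uniformly Cauchy on every $\{|x|\geq\rho\}$, hence converges locally uniformly off the origin to the representative of $u$ that is continuous there, which inherits the estimate.

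Next I would prove the continuous embedding $X_r^\sigma\hookrightarrow L^p(\R^N)$ for $p\in[2\sigma+2,2^*)$ by splitting at $|x|=1$. On $B_1$ an elementary radial estimate (integrate $u'$ from $r$ up to $1$ and use the pointwise bound at $r=1$) gives $\|u\|_{L^2(B_1)}\leq C\|u\|_\sigma$, hence $\|u\|_{H^1(B_1)}\leq C\|u\|_\sigma$, and the Sobolev embedding $H^1(B_1)\hookrightarrow L^p(B_1)$ (valid up to $p=2^*$) disposes of the region near the origin. On $\{|x|\geq1\}$ I would first treat $p=2\sigma+2$: by Part 1, $|u|^{2\sigma}\leq(C_N\|u\|_\sigma)^{2\sigma}|x|^{-2\sigma\kappa}$, and the elementary inequality $-2\sigma\kappa\leq\alpha\sigma$, which rearranges exactly to $-\alpha\sigma\leq2(N-1+\alpha)$ (a consequence of \eqref{eq1.13}), gives $|x|^{-2\sigma\kappa}\leq|x|^{\alpha\sigma}$ on $\{|x|\geq1\}$; therefore $\int_{|x|\geq1}|u|^{2\sigma+2}\leq(C_N\|u\|_\sigma)^{2\sigma}\int_{|x|\geq1}|x|^{\alpha\sigma}u^2\leq C\|u\|_\sigma^{2\sigma+2}$. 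For $2\sigma+2<p<2^*$ I would bound the extra factor $|u|^{p-2\sigma-2}$ on $\{|x|\geq1\}$ by $(C_N\|u\|_\sigma)^{p-2\sigma-2}$ with the pointwise estimate and reduce to the previous case. (The lower endpoint $2\sigma+2$ is natural here: pairing $|u|^{2\sigma}\lesssim|x|^{-2\sigma\kappa}$ with the $|x|^{\alpha\sigma}u^2$ part of $\|u\|_\sigma^2$ leaves exactly two powers of $u$; for smaller $p$ one would need $L^2$-control at infinity, unavailable when $\alpha<0$.)

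For compactness, let $(u_n)$ be bounded in $X_r^\sigma$. On each ball $B_R$ the estimate $\|u\|_{H^1(B_R)}\leq C_R\|u\|_\sigma$ together with Rellich--Kondrachov gives, after a diagonal extraction, a subsequence (still denoted $u_n$) and a limit $u\in X_r^\sigma$ with $u_n\to u$ in $L^p(B_R)$ for every $R$. It remains to make the tail uniformly small. Since $-2\sigma\kappa-\alpha\sigma=-\sigma(2\kappa+\alpha)<0$ \emph{strictly} — this is the strict inequality $-\alpha\sigma<2(N-1+\alpha)$ contained in \eqref{eq1.13} — we have $|x|^{-2\sigma\kappa}\leq R^{-\sigma(2\kappa+\alpha)}|x|^{\alpha\sigma}$ on $\{|x|\geq R\}$, so as above $\int_{|x|\geq R}|u_n|^{2\sigma+2}\leq C R^{-\sigma(2\kappa+\alpha)}\|u_n\|_\sigma^{2\sigma+2}\to0$ as $R\to\infty$ uniformly in $n$, and for $p>2\sigma+2$ the additional factor $(C_N\|u_n\|_\sigma R^{-\kappa})^{p-2\sigma-2}$ only improves this. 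Splitting $\|u_n-u\|_{L^p(\R^N)}$ into its contribution on $B_R$ (which tends to $0$ as $n\to\infty$) and on $\{|x|>R\}$ (uniformly small for $R$ large, by the tail bound applied to $u_n$ and to $u$), I conclude $u_n\to u$ in $L^p(\R^N)$.

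The routine part is the behavior near the origin: there the weight $|x|^{\alpha\sigma}$ only degenerates (or blows up, if $\alpha<0$) like a power of $|x|$, so the radial Sobolev embedding into $H^1(B_1)$ takes care of it. The delicate part, and the place where \eqref{eq1.13} is used essentially, is the behavior at infinity: both the weight comparison $|x|^{-2\sigma\kappa}\lesssim|x|^{\alpha\sigma}$ and, above all, the uniform decay of the $L^p$-tail rest on the sharp inequality $-\alpha\sigma<2(N-1+\alpha)$, whose strictness is precisely what produces the negative power of $R$ needed for uniform smallness. I expect this tail estimate to be the main obstacle.
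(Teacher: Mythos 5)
Your proposal is correct. The pointwise decay estimate is proved exactly as in the paper: you differentiate $r^{N-1+\frac{\alpha\sigma}{2}}u^2$ (your $h$), drop the nonnegative term using $N-1+\frac{\alpha\sigma}{2}>0$ from \eqref{eq1.13}, integrate on $[r,\infty)$ and apply $2ab\le a^2+b^2$ to land on $\|u\|_\sigma^2$, then extend by density. For the compact embedding the overall strategy is also the paper's (local compactness plus a tail estimate that is uniform on bounded sets, both resting on the pointwise bound and on the strict inequality $-\alpha\sigma<2(N-1+\alpha)$), but the bookkeeping differs: the paper estimates $\int_{|x|>R}|u_n|^p$ by a H\"older interpolation with a small auxiliary exponent $\kappa$, pairing the weighted $L^2$ part of the norm against a pure power of $|x|$ whose exponent is pushed below $-N$, whereas you anchor at the endpoint $p=2\sigma+2$, pairing $|u|^{2\sigma}\lesssim \|u\|_\sigma^{2\sigma}|x|^{-\sigma(N-1+\frac{\alpha\sigma}{2})}\le \|u\|_\sigma^{2\sigma}R^{-\sigma(N-1+\alpha+\frac{\alpha\sigma}{2})}|x|^{\alpha\sigma}$ directly with $\int|x|^{\alpha\sigma}u^2$, and absorb the extra factor $|u|^{p-2\sigma-2}$ pointwise for larger $p$; in both arguments the restriction $p\ge 2\sigma+2$ enters in the same way. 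Your version is slightly more elementary (no interpolation parameter to tune, and it gives the continuous embedding with an explicit constant), and you also supply the near-origin step $X_r^\sigma\hookrightarrow H^1(B_R)$ via the radial estimate, which the paper uses implicitly when it invokes the compact embedding into $L^p_{loc}$; the paper's interpolation, on the other hand, is written so as to apply verbatim to a weakly null sequence without separating the cases $p=2\sigma+2$ and $p>2\sigma+2$. No gaps.
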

\begin{proof}
	 By density, for $u \in X_r^\sigma \cap  C_0^\infty(\R^N)$,
$$\|u\|_\sigma^2=\int_{\R^N}|\nabla u|^2+|x|^{\alpha\sigma} u^2\d x
=\omega_N\int_{0}^{+\infty}r^{N-1}\left((u')^2+r^{\alpha\sigma} u^2 \right)\d r,$$
where $\omega_N$ denotes the surface area of the unit sphere in $\mathbb{R}^N$.	 
Since
$$
 \frac{\d}{\d r}\left(r^{N-1+\frac{\alpha\sigma}2} u^{2}\right)=2 u r^{N-1+\frac{\alpha\sigma}2} \frac{\d u}{\d r}  + (N-1+\frac{\alpha\sigma}2)r^{N-2+\frac{\alpha\sigma}2}u^2\geq 
 2 u r^{N-1+\frac{\alpha\sigma}2} \frac{\d u}{\d r}.
$$
Integrating over $[r,+\infty)$, we obtain
$$
\begin{aligned}
	r^{N-1+\frac{\alpha\sigma}2} u^{2}(r)  \leq 2 \int_{r}^{\infty}|s^{\frac{\alpha\sigma}2}u|\left|\frac{\d u}{\d s}\right| s^{N-1} \d s  
  \leq  \int_{r}^{\infty} s^{N-1}\left((u')^2+s^{\alpha\sigma} u^2 \right) \d s\leq C_N^{2}\|u\|_\sigma^2 .  
\end{aligned}
$$
We have verified the first part of Lemma \ref{Lem5.1}.
To show the compactness,  assume $u_n\rightharpoonup 0$ in $X_r^\sigma$. 
Fixing $p\in [2\sigma+2,2^*)$,
we have for  $\kappa\in(0,1)$ and $R>0$ 
\begin{align*}
	\int_{|x|>R} |u_n|^{p}
	=&\int_{|x|>R}\left(|x|^{\frac{\alpha\sigma}2}|u_n|\right)^{2-2\kappa}|x|^{-\alpha\sigma(1-\kappa)}|u_n|^{p-2+2\kappa}\\
	\leq& \left(\int_{\R^N}|x|^{ {\alpha\sigma}}u_n^2\right)^{1-\kappa}
	\left(\int_{|x|>R} |x|^{-\alpha\sigma(\kappa^{-1}-1)}|u_n|^{(p-2)\kappa^{-1}+2} \right)^\kappa
	\\
	\leq& \|u_n\|_{\sigma}^{2(1-\kappa)}\left(\int_{|x|>R} |x|^{-\alpha\sigma(\kappa^{-1}-1)}|u_n|^{(p-2)\kappa^{-1}+2} \right)^\kappa\\
	\leq & C_N^{p-2+2\kappa}\|u_n\|_\sigma^{p}
	\left(\int_{|x|>R} |x|^{(\frac{1-N}2-\frac{\alpha\sigma}4)(p-2)\kappa^{-1}-\alpha\sigma\kappa^{-1}+1-N+\frac{\alpha\sigma}2}  \right)^\kappa.
\end{align*}
Note that by \eqref{eq1.13}, $\frac{1-N}2-\frac{\alpha\sigma}4<0$ and
$$\left(\frac{1-N}2-\frac{\alpha\sigma}4\right)(p-2) -\alpha\sigma\leq \left(1-N-\frac{\alpha\sigma}2-\alpha\right)\sigma<0.$$ 
Thus,  we can fix $\kappa>0$ sufficiently small such that
\[
 \begin{aligned}
	   \left(\frac{1-N}2-\frac{\alpha\sigma}4\right)(p-2)\kappa^{-1}& -\alpha\sigma \kappa^{-1}+1-N+\frac{\alpha\sigma}2\\
	  & \leq\left(1-N-\frac{\alpha\sigma}2-\alpha\right)\sigma\kappa^{-1}+1-N+\frac{\alpha\sigma}2
	 <-N. 
 \end{aligned}
 \]
Then we have 
$$\lim_{R\to\infty}\sup_{n}\int_{|x|>R} |u_n|^{p}=0.
$$
We can finally verify $u_n\to 0$ in $L^{p}(\R^N)$ by the compact embedding from $X_r^\sigma$ to $L_{loc}^{p}(\R^N)$.
\end{proof}
The existence is a corollary of Lemma \ref{Lem5.1} and the uniqueness is a result 
of a modified decay estimate and the arguments in \cite{BO}. 
\begin{Pro}\label{Pro5.1} Assume \eqref{eq1.13}.
There is a positive solution $u_\sigma\in X_\sigma^r$ to \eqref{eq5.1} satisfying
\begin{equation}\label{eq523}
	 u_\sigma(x)\leq C_\sigma \exp\big( -c_\sigma|x|^{\frac{\alpha\sigma+2}2} \big),
\end{equation}
 where $C_\sigma, c_\sigma$ are positive constants independent of $x$.  Assuming further that $-\sigma\alpha<1$, then $u_\sigma$ is the 
 unique positive radial solution to \eqref{eq5.1}.
\end{Pro}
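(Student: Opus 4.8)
The plan is to prove the three assertions of the proposition --- existence of a positive radial solution $u_\sigma\in X_r^\sigma$ of \eqref{eq5.1}, the exponential decay \eqref{eq523}, and uniqueness under the extra hypothesis $-\sigma\alpha<1$ --- in that order. For existence, I would minimize the $C^1$ energy $\mathcal I_\sigma(u)=\frac12\|u\|_\sigma^2-\frac1{2\sigma+2}\int_{\R^N}|u|^{2\sigma+2}$ over the Nehari manifold $\mathcal N_\sigma=\{u\in X_r^\sigma\setminus\{0\}:\|u\|_\sigma^2=\int_{\R^N}|u|^{2\sigma+2}\}$. This is legitimate because $2\sigma+2\in[2\sigma+2,2^*)$, so under \eqref{eq1.13} Lemma \ref{Lem5.1} supplies the compact embedding $X_r^\sigma\hookrightarrow L^{2\sigma+2}(\R^N)$. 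A minimizing sequence is bounded in $X_r^\sigma$, its weak limit is nontrivial by that compactness, and the usual Nehari rescaling shows the limit realizes the strictly positive infimum, hence solves \eqref{eq5.1}. Passing to the modulus and using elliptic regularity on $\R^N\setminus\{0\}$ together with the strong maximum principle of \cite{V} --- as in the proof of Proposition \ref{lemma2.1} --- gives $u_\sigma>0$ in $\R^N\setminus\{0\}$.

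For the decay estimate, one first checks from \eqref{eq1.13} that the exponent $\frac{1-N}2-\frac{\alpha\sigma}4$ in Lemma \ref{Lem5.1} is negative, so $u_\sigma(x)\to0$ as $|x|\to\infty$; hence there is $R>0$ with $u_\sigma(x)^{2\sigma}\le\frac12|x|^{\alpha\sigma}$ on $\{|x|\ge R\}$, i.e.\ $u_\sigma$ is a subsolution of $-\Delta+\frac12|x|^{\alpha\sigma}$ there. Setting $\beta=\frac{\alpha\sigma+2}2$ (positive by \eqref{eq1.13}), a short computation gives, for $\varphi(x)=e^{-c|x|^\beta}$,
\[
	-\Delta\varphi+\tfrac12|x|^{\alpha\sigma}\varphi=\Bigl(c\beta(\beta+N-2)|x|^{\beta-2}+(\tfrac12-c^2\beta^2)|x|^{\alpha\sigma}\Bigr)\varphi,
\]
which is strictly positive on $\{|x|>R\}$ once $c^2\beta^2<\frac12$ (note $\beta+N-2\ge\beta>0$ since $N\ge2$). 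Choosing $C_\sigma$ with $C_\sigma\varphi\ge u_\sigma$ on $\{|x|=R\}$ and comparing on the exterior domain --- both functions vanish at infinity and the zeroth-order coefficient is nonnegative, so a positive interior maximum of $u_\sigma-C_\sigma\varphi$ is impossible --- yields $u_\sigma\le C_\sigma\varphi$ for $|x|\ge R$; combined with the boundedness of $u_\sigma$ on $\{|x|\le R\}$ this gives \eqref{eq523} after adjusting constants. The same argument applies to any positive radial $H^1$-solution of \eqref{eq5.1}.

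For uniqueness, the key observation is that $-\sigma\alpha<1$ is exactly the condition making $|x|^{\alpha\sigma}\in L^q_{loc}(\R^N)$ for some $q>N$ (take $q$ just above $N$); this is the analogue of (V1), and it guarantees that every positive radial $H^1$-solution $u$ of \eqref{eq5.1} is $C^{2-\gamma'}$ near the origin, that the radial initial value problem is well posed at $r=0$ (cf.\ Remark \ref{Rek2.1}), in particular $u(0)>0$, and, with the decay just proved, that $u$ obeys \eqref{eq523}. One then runs the scheme of \cite{BO} exactly as in the proof of Theorem \ref{Thm1.2}: putting $v=r^{(N-1)/2}u$ turns \eqref{eq5.1} into a scalar ODE with potential $G(r)=r^{\alpha\sigma}+\frac{(N-1)(N-3)}{4r^2}$ and a weighted power nonlinearity; one introduces the associated energy $E(r;u)$, proves $E(r;u)>0$ for $r>0$ and $E(r;u)\to0$ as $r\to\infty$ (using the decay estimate at infinity and the shape of $G$ near $0$, whose derivative $G'$ has a controlled sign pattern because $rV'(r)=\alpha\sigma r^{\alpha\sigma}$ is monotone and the singular term is explicit); and finally, given two positive radial solutions with $u_1(0)<u_2(0)$, reaches a contradiction via the intersection and comparison lemmas of the Appendix by integrating $\frac{\d}{\d r}\bigl((v_2/v_1)^2E_1-E_2\bigr)$ over $(0,\infty)$, just as in the proof of Theorem \ref{Thm1.2}.

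The step I expect to be the main obstacle is this last one: adapting the Byeon--Oshita energy method to the presence of both the singular/unbounded radial weight $|x|^{\alpha\sigma}$ and the additional weight $r^{-(N-1)\sigma}$ that the substitution $v=r^{(N-1)/2}u$ places on the nonlinearity, and controlling the boundary behavior of $E(r;u)$ at $r=0^+$ --- which is precisely where $-\sigma\alpha<1$, hence $C^{2-\gamma'}$ regularity near the origin, is indispensable.
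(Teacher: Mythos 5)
Your existence and decay arguments follow essentially the paper's route (Nehari minimization in place of the mountain pass is immaterial, and your comparison function $e^{-c|x|^{(\alpha\sigma+2)/2}}$ with $c^2\beta^2<\tfrac12$ is exactly the paper's barrier), but two points need care. First, when $\alpha<0$ membership in $X_r^\sigma$ does not give $u\in L^2(\R^N)$, so you cannot say the Nehari minimizer ``solves \eqref{eq5.1}'' before the decay estimate is in hand; the paper first produces a solution of the relaxed problem \eqref{eq5.24} and only then uses \eqref{eq523} to obtain square integrability, and your argument should be reordered in the same way. Second, your deduction ``$u_\sigma\to0$, hence $u_\sigma^{2\sigma}\le\tfrac12|x|^{\alpha\sigma}$ for $|x|\ge R$'' is false as stated when $\alpha<0$, since then the potential itself tends to $0$ at infinity; what is needed is the quantitative rate of Lemma \ref{Lem5.1} together with the inequality $\tfrac{1-N}2-\tfrac{\alpha\sigma}4<\tfrac{\alpha}2$, which is exactly where the alternative $-\sigma\alpha<2(N-1+\alpha)$ in \eqref{eq1.13} enters. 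Both are easily repaired with the tools you already cite.

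The genuine gap is in the uniqueness part. You propose to re-run the Byeon--Oshita energy scheme ``exactly as in the proof of Theorem \ref{Thm1.2}'' and then explicitly flag the adaptation as the main obstacle; but that adaptation is the entire content of the claim, and your sketch points at the wrong quantity. After the substitution $v=r^{(N-1)/2}u$ the nonlinearity carries the weight $r^{-(N-1)\sigma}$, so the monotonicity that drives the argument is not the sign of $G'$ with $G(r)=r^{\alpha\sigma}+\tfrac{(N-1)(N-3)}{4r^2}$ (indeed, for $\alpha\in(1-N,0)$ one has $G'(r)<0$ for all large $r$, so the $(V2)$-type reasoning of Theorem \ref{Thm1.2} breaks down), but the Byeon--Oshita condition on $H(r)=\bigl(\tfrac{2(N-1)}{\sigma+2}+\alpha\bigr)\sigma r^{\alpha\sigma+2}-D(N,\sigma)$, in which the exponent $\sigma$ of the nonlinearity enters through the factor $\tfrac{2(N-1)}{\sigma+2}$. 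The paper's proof consists precisely of verifying that \eqref{eq1.13} forces $H$ to be strictly increasing with $H(0)>0$ for $N=2$, respectively $H(0)<0$ and a unique simple zero for $N\ge3$, and then invoking \cite[Theorem 1.1]{BO} directly, observing that the failure of BO's hypothesis (V1) for $r^{\alpha\sigma}$ is harmless because the decay \eqref{eq523} substitutes for it in that proof. Without this verification (or an equivalent sign analysis of the correct energy derivative), your plan does not establish uniqueness; your appeal to the monotonicity of $rV'(r)=\alpha\sigma r^{\alpha\sigma}$ and to ``the shape of $G$ near $0$'' does not capture where \eqref{eq1.13} is actually used.
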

\begin{proof}
(i) By the compact embedding $X_r^\sigma\subset L^{2\sigma+2}(\R^N)$
 and the mountain pass theorem,    the following equation has a nonnegative nontrivial radial solution
 $u\in X_r^\sigma$,
\begin{equation} \label{eq5.24}
	-\Delta u + |x|^{\alpha\sigma} u =|u|^{2\sigma}u,\quad \limsup_{|x|\to\infty}|x|^{\frac{N-1}2+\frac{\alpha\sigma}4} u(|x|)<+\infty.
\end{equation}
By \eqref{eq1.13}, $u\in C(\R^N)$ since there holds $|x|^{\alpha\sigma}\in L^q(B_1(0))$ for some $q>N/2$.
By the maximum prinviple, we can conclude that $u$ is positive in $\R^N\setminus\{0\}$.
In order to show that $u$ is a solution of \eqref{eq5.1}, we claim that  \eqref{eq523},
which  ensures   the  square integrablity, holds for any solution to \eqref{eq5.24}.
By \eqref{eq1.13}, we know $\frac{1-N}2-\frac{\alpha\sigma}4<\frac\alpha2$.
Then by Lemma \ref{Lem5.1}, there is $R>0$ such that if $|x|\geq R$, then
$$-\Delta u +\frac{1}2|x|^{\alpha\sigma}u\leq -\Delta u + (|x|^{\alpha\sigma}-|u|^{2\sigma})u=0.
$$
On the other hand, fix $c_\sigma\in(0,\frac{\sqrt2}{2+\alpha\sigma})$ and denote
$v(x)=\exp\big( -c_\sigma|x|^{\frac{\alpha\sigma+2}2} \big)$.
Then there is $R'\geq R$ such that if $|x|\geq R'$,then
$$-\Delta v +\frac{1}2|x|^{\alpha\sigma}v\geq 0.
$$
Let $C_\sigma>0$ be such that $u(x)\leq C_\sigma v(x)$ for $|x|=R'$.
We have  
\begin{align*}
	0\leq&\int_{|x|>R'}|\nabla (u-C_\sigma v)^+|^2+\frac12 |x|^{\alpha\sigma}((u-C_\sigma v)^+)^2\\
	=&\int_{|x|>R'}\left(-\Delta(u-C_\sigma v)+\frac12|x|^{\alpha\sigma}(u-C_\sigma v)\right)(u-C_\sigma v)^+\leq 0.
\end{align*}
This implies $u\leq C_\sigma v$.

(ii)
If assume further that $-\sigma\alpha<1$, then by the regularity theory,  $u_\sigma\in C^{2-\gamma'}(\R^N)\cap C^2(\R^N\setminus\{0\})$ for 
each $\gamma'\in(-\alpha\sigma,1)$. Then $u_\sigma'(0)=0$ and similar to the proof of Proposition \ref{LemA1} (i),  we can conclude that $u_\sigma>0$.
To  see the uniqueness, we   check the arguments of \cite[Theorem 1.1]{BO} to 
\begin{equation}\label{eq525}
	 \begin{cases}
   	u''+\frac{N-1}r u'- r^{\alpha\sigma}u+ |u|^{2\sigma}u=0,\quad u>0\quad  \mbox{in} \quad (0,+\infty),\\
   	u'(0)=0,\quad \limsup_{r\to\infty}r^{\frac{N-1}2+\frac{\alpha\sigma}4} u(r)<+\infty.
   \end{cases}
\end{equation}
We first check that under condition \eqref{eq1.13},  $V(r)=r^{\alpha\sigma}$ satisfies the second assumption of \cite[(V2)]{BO}, 
i.e., the function
$$H(r)=\left(\frac{2(N-1)}{\sigma+2}+\alpha\right)\sigma r^{\alpha\sigma+2}-D(N,\sigma),$$
where $D(N,\sigma)$ is a negative constant if $N=2$ and a positive constant if $N\geq 3$,
satisfies that
$\inf _{r>0} H(r)>0$ for $N=2$, and $H$ has the unique simple zero in $(0, \infty)$ and $\limsup _{r \rightarrow 0} H(r)<0$ for $N \geqslant 3$.
In fact, the condition \eqref{eq1.13} implies that $H(r)$ is strictly increasing, $\lim_{r\to\infty}H(r)=+\infty$ and $H(0)>0$ if $N=2$ and $H(0)<0$ if $N\geq 3$.

Now the only   point  to apply \cite[Theorem 1.1]{BO} for \eqref{eq525} is that $V(r)=r^{\alpha\sigma}$ does not meet the assumption \cite[(V1)]{BO}. However checking the proof \cite[Theorem 1.1]{BO}, we can see that the fact $u(r)\leq C\exp\big( -cr^{\frac{\alpha\sigma+2}2} \big)$ 
is sufficient to continue the arguments.
\end{proof} 
Now we consider the asymptotic behavior of the unique positive radial solution $u_\sigma$
as $\sigma\to 0^+$.
Set $$v_\sigma=\sigma^{\frac{\alpha}{4+2\alpha\sigma}}u_\sigma( {\sigma}^{-\frac{1}{2+\alpha\sigma}}\cdot).$$  Then $v_\sigma$ satisfies
\begin{equation}\label{eq5.2}
	-\Delta v + \sigma^{-1}|x|^{\alpha\sigma} v=  \sigma^{-1}|v|^{2\sigma} v,
\end{equation}
or  equivalently, 
\begin{equation}\label{eq5.3}
	-\Delta v + \sigma^{-1}( |x|^{\alpha\sigma}-1) v
	=\sigma^{-1} (|v|^{2\sigma}- 1) v.
\end{equation}
Consider
 the functional corresponding to \eqref{eq5.2}
$$I_{1,\sigma}(v)=\frac12\int_{\R^N}(|\nabla v|^2+ \sigma^{-1}|x|^{\alpha\sigma} v^2)-\frac1{\sigma(2\sigma+2)}\int_{\R^N}|v|^{2\sigma+2},
\quad v\in X_r^\sigma.
$$
By the standard argument, we  can see  
$$I_{1,\sigma}(v_\sigma)=\inf_{v\in X_r^\sigma\setminus\{0\}}\sup_{t\geq 0} I_{1,\sigma}(tv).
$$

We define 
$$I_1(v)=\frac12\int_{\R^N}\left(|\nabla v|^2+\alpha v^2   \log |x|   \right)\d x
+\frac1{2}\int_{\R^N} v^{2}\d x\\
-\frac1{2}\int_{\R^N}  v^2 \log v^2 \d x.
$$
where $v$ belongs to
$$Y_r:=\Set{v\in H_r^1(\R^N) | \int_{|x|\geq 1}v^2\log |x| <+\infty},
$$
equipped with the norm $$\|v\|_Y:=\left(\int_{\R^N}|\nabla v|^2+(1+(\log|x|)^+)v^2 \right)^\half.$$
Note that 
\begin{equation}\label{eq5.7}
	\lim_{\sigma\to0^+}I_{1,\sigma}(v)=I_1(v)\quad \mbox{for each}\quad v\in C_0^1(\R^N)\cap H_r^1(\R^N). 
\end{equation}
\begin{Pro}
Let $\alpha>1-N$. Then 
\begin{equation}\label{eq5.8}
		0<\liminf_{\sigma\to0^+} I_{1,\sigma}(v_\sigma)\leq \limsup_{\sigma\to0^+} I_{1,\sigma}(v_\sigma)<+\infty,
\end{equation}
\begin{equation}\label{eq5.9}
	0<\limsup_{\sigma\to0^+} \|v_\sigma\|_Y\leq \limsup_{\sigma\to0^+} \|v_\sigma\|_Y<+\infty,
\end{equation}
\end{Pro}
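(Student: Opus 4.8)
The plan is to reduce both \eqref{eq5.8} and \eqref{eq5.9} to two--sided bounds on $I_{1,\sigma}(v_\sigma)$, using the algebraic identities satisfied by $v_\sigma$ together with an explicit description of the fibering map $t\mapsto I_{1,\sigma}(tv)$. Testing \eqref{eq5.2} against $v_\sigma$ gives the Nehari identity $\sigma\int_{\R^N}|\nabla v_\sigma|^2+\int_{\R^N}|x|^{\alpha\sigma}v_\sigma^2=\int_{\R^N}|v_\sigma|^{2\sigma+2}$, and multiplying \eqref{eq5.2} by $x\cdot\nabla v_\sigma$ and integrating yields a Pohozaev identity which, combined with the Nehari identity, gives $\int_{\R^N}|\nabla v_\sigma|^2=\frac{N+\alpha(\sigma+1)}{2-(N-2)\sigma}\int_{\R^N}|x|^{\alpha\sigma}v_\sigma^2$, hence $\int_{\R^N}|v_\sigma|^{2\sigma+2}=\bigl(1+\sigma\tfrac{N+\alpha(\sigma+1)}{2-(N-2)\sigma}\bigr)\int_{\R^N}|x|^{\alpha\sigma}v_\sigma^2$ and $I_{1,\sigma}(v_\sigma)=\tfrac1{2\sigma+2}\int_{\R^N}|v_\sigma|^{2\sigma+2}$. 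Since $\alpha>1-N$, the coefficient $\frac{N+\alpha(\sigma+1)}{2-(N-2)\sigma}\to\frac{N+\alpha}2>0$, so for all small $\sigma$ the four quantities $\int|\nabla v_\sigma|^2$, $\int|x|^{\alpha\sigma}v_\sigma^2$, $\int|v_\sigma|^{2\sigma+2}$ and $I_{1,\sigma}(v_\sigma)$ are comparable with constants independent of $\sigma$; in particular $\|v_\sigma\|_\sigma^2$ is comparable to $I_{1,\sigma}(v_\sigma)$. Thus \eqref{eq5.8} reduces to bounding $I_{1,\sigma}(v_\sigma)$ above and below away from $0$, and in \eqref{eq5.9} only an upper bound for $\int v_\sigma^2+\int(\log|x|)^+v_\sigma^2$ remains, its lower bound being immediate from $\|v_\sigma\|_Y\ge\|\nabla v_\sigma\|_{L^2(\R^N)}$, which has just been bounded below.

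The upper bounds are the easy half. Fixing a radial $\phi\in C_0^\infty(\R^N\setminus\{0\})$ with $\phi\not\equiv0$, the characterization $I_{1,\sigma}(v_\sigma)=\inf_{v\in X_r^\sigma\setminus\{0\}}\sup_{t\ge0}I_{1,\sigma}(tv)$ gives $I_{1,\sigma}(v_\sigma)\le\sup_{t\ge0}I_{1,\sigma}(t\phi)$; since $|x|^{\alpha\sigma}=1+O(\sigma)$ uniformly on $\supp\phi$, a direct analysis of $t\mapsto I_{1,\sigma}(t\phi)$ shows that its maximizer stays in a fixed compact subset of $(0,\infty)$ as $\sigma\to0^+$ and, by \eqref{eq5.7}, $\sup_tI_{1,\sigma}(t\phi)\to\sup_tI_1(t\phi)<\infty$, whence $\limsup_{\sigma\to0^+}I_{1,\sigma}(v_\sigma)<\infty$. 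For the weighted $L^2$ term one splits $\R^N$ at $|x|=1$: on $\{|x|\le1\}$ the radial estimate of Lemma \ref{Lem5.1} gives $v_\sigma(x)^2\lesssim\|v_\sigma\|_\sigma^2|x|^{1-N-\alpha\sigma/2}$, whose exponent is admissible for $\sigma$ small, so it is integrable against $1+|\log|x||$ with a bound uniform in $\sigma$ (using the comparability of $\|v_\sigma\|_\sigma^2$ with $I_{1,\sigma}(v_\sigma)$); on $\{|x|>1\}$ the rescaled form of the Gaussian decay \eqref{eq523}, whose rate improves as $\sigma\to0^+$, dominates $(1+\log|x|)v_\sigma^2$ uniformly. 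Together with the comparabilities of the first paragraph this yields the upper bounds in \eqref{eq5.8}--\eqref{eq5.9}.

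The lower bound $\liminf_{\sigma\to0^+}I_{1,\sigma}(v_\sigma)>0$ is the main obstacle. Maximizing $t\mapsto I_{1,\sigma}(tv)$ explicitly and normalizing $\|v\|_{L^{2\sigma+2}}=1$ gives $\sup_{t\ge0}I_{1,\sigma}(tv)=\tfrac1{2\sigma+2}\bigl(\sigma\|\nabla v\|_{L^2(\R^N)}^2+\int_{\R^N}|x|^{\alpha\sigma}v^2\bigr)^{1+1/\sigma}$, so that $I_{1,\sigma}(v_\sigma)=\tfrac1{2\sigma+2}\Lambda_\sigma^{1+1/\sigma}$ with $\Lambda_\sigma:=\inf\bigl\{\sigma\|\nabla v\|_{L^2(\R^N)}^2+\int_{\R^N}|x|^{\alpha\sigma}v^2:\,v\in X_r^\sigma,\ \|v\|_{L^{2\sigma+2}}=1\bigr\}$. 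Because $1+1/\sigma\to\infty$, the required bound is equivalent to the sharp two--sided estimate $\Lambda_\sigma=1+O(\sigma)$: the upper part $\Lambda_\sigma\le1+C\sigma$ follows by testing with the normalized $\phi$ of the previous paragraph, while the essential content is $\Lambda_\sigma\ge1-C\sigma$. For the latter I would combine the Gagliardo--Nirenberg inequality $\|v\|_{L^{2\sigma+2}}^{2\sigma+2}\le C_\sigma\|v\|_{L^2(\R^N)}^{2\sigma+2-N\sigma}\|\nabla v\|_{L^2(\R^N)}^{N\sigma}$, whose best constant satisfies $C_\sigma=1+O(\sigma)$ as $\sigma\to0^+$, with an optimization in $\|v\|_{L^2(\R^N)}$ and a careful comparison of $\int|x|^{\alpha\sigma}v^2$ with $\|v\|_{L^2(\R^N)}^2$. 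The delicate point is this last comparison when $1-N<\alpha<0$, where $|x|^{\alpha\sigma}$ no longer dominates $1$ at infinity: there one must first confine the near--extremal $v$ using the radial estimate of Lemma \ref{Lem5.1}, exploiting that within the radial class transporting mass to infinity forces $\sigma\|\nabla v\|_{L^2(\R^N)}^2$ to grow, so that the factor $\sigma^{-1}$ in the exponent $1+1/\sigma$ is not defeated. Once $\Lambda_\sigma\ge1-C\sigma$ is established, $(1-C\sigma)^{1+1/\sigma}\to e^{-C}>0$ gives $\liminf_{\sigma\to0^+}I_{1,\sigma}(v_\sigma)>0$, and the comparabilities of the first paragraph transfer this to the lower bounds in \eqref{eq5.8}--\eqref{eq5.9}.
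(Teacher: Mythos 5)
Your reductions are correct as far as they go: the Nehari and Pohozaev identities do give $\int_{\R^N}|\nabla v_\sigma|^2=\frac{N+\alpha(\sigma+1)}{2-(N-2)\sigma}\int_{\R^N}|x|^{\alpha\sigma}v_\sigma^2$ and $I_{1,\sigma}(v_\sigma)=\frac1{2\sigma+2}\int_{\R^N}|v_\sigma|^{2\sigma+2}$, the fiber computation $I_{1,\sigma}(v_\sigma)=\frac1{2\sigma+2}\Lambda_\sigma^{1+1/\sigma}$ is right, and your upper bound for $I_{1,\sigma}(v_\sigma)$ is essentially the paper's. But the two hard halves are not proved. The lower bound is reduced to $\Lambda_\sigma\ge 1-C\sigma$, and this is exactly where the proposal stops being a proof: it rests on (i) the asserted expansion $C_\sigma=1+O(\sigma)$ of the sharp Gagliardo--Nirenberg constant, which is stated without proof and is genuinely delicate here, because the estimate gets raised to the power $1+1/\sigma$ -- a correction of size $\sigma\log(1/\sigma)$ instead of $O(\sigma)$ would already give $\Lambda_\sigma^{1/\sigma}\to0$ and destroy the conclusion; and (ii) for $1-N<\alpha<0$, the comparison of $\int|x|^{\alpha\sigma}v^2$ with $\|v\|_{L^2}^2$ via ``confinement of near-extremals,'' which you yourself flag as the delicate point and only describe as a plan. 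The paper bypasses all of this with the elementary inequality $\sigma^{-1}(s^{t\sigma}-1)\ge t\log s$ (see \eqref{eq511}): it bounds $I_{1,\sigma}$ from below by the $\sigma$-independent logarithmic functional $I_{2,\sigma}$ on $Y_r$, shows $I_{2,\sigma}(v)\ge\frac14\|v\|_Y^2-C\|v\|_Y^{p'}$ with exponents strictly below $2$ and constants independent of $\sigma$ (the analogue of \eqref{eq30} and Lemma \ref{Lem21}), and then uses $I_{1,\sigma}(v_\sigma)=\max_{t\ge0}I_{1,\sigma}(tv_\sigma)\ge\max_{t\ge0}I_{2,\sigma}(tv_\sigma)\ge\frac18 m^2$; no sharp-constant asymptotics are needed.

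There is a second gap in your upper bound for $\|v_\sigma\|_Y$: on $\{|x|>1\}$ you invoke the rescaled form of \eqref{eq523} ``uniformly,'' but the constants $C_\sigma,c_\sigma$ in Proposition \ref{Pro5.1} are not uniform in $\sigma$ (they come from a comparison on a $\sigma$-dependent radius $R'$), and the uniform decay \eqref{eq5.18} is only established later in the paper \emph{using} the bound \eqref{eq5.9} you are trying to prove, so as written the argument is circular. (Your pointwise Strauss-type bound from Lemma \ref{Lem5.1} cannot substitute for it: $|x|^{1-N-\alpha\sigma/2}$ is not integrable against $(\log|x|)^+$ on $\{|x|>1\}$ uniformly as $\sigma\to0^+$.) The paper instead extracts the $Y_r$-bound from the same functional inequality: boundedness of $\frac1{2\sigma+2}\int v_\sigma^{2\sigma+2}=I_{1,\sigma}(v_\sigma)-\frac12 I_{1,\sigma}'(v_\sigma)v_\sigma$ plus $I_{1,\sigma}(v_\sigma)\ge\frac14\|v_\sigma\|_Y^2-C(\|v_\sigma\|_Y^{p\nu_{p,\sigma}}+\|v_\sigma\|_Y^{p-p_1+p_1\nu_{p_1,\sigma}})$ with subquadratic exponents, which forces $\limsup_{\sigma\to0^+}\|v_\sigma\|_Y<+\infty$ without any pointwise decay. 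To salvage your route you would need an honest proof of $\Lambda_\sigma\ge1-C\sigma$ (including the sharp GN asymptotics with an $O(\sigma)$ error and the $\alpha<0$ confinement step) and a $\sigma$-uniform decay or weighted-energy estimate for $v_\sigma$ that does not presuppose \eqref{eq5.9}.
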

\begin{proof}
Fix a nonnegative nontrivial function $v\in  C_0^1(\R^N)\cap H_r^1(\R^N)$.
We can find $T_0>0$ such that 
$I_1(T_0v)< 0$. 
By \eqref{eq5.7}, for $\sigma>0$ sufficiently small $I_{1,\sigma}(T_0v)<0$. 
Note that the function
$t\mapsto I_1(tv)$ admits a unique extreme point in $(0,+\infty)$. 
Then we have 
$$\lim_{\sigma\to0^+}I_{1,\sigma}(v_\sigma)\leq \lim_{\sigma\to0^+} \max_{t\geq 0}I_{1,\sigma}(tv)=\lim_{\sigma\to0^+} \max_{t\in[0,T_0]}I_{1,\sigma}(tv)=\  \max_{t\geq 0}I_{1}(tv)<+\infty.
$$
Let us  rewrite
\begin{equation}\label{eq510}
	\begin{aligned}
	I_{1,\sigma}(v)&=\frac12\int_{\R^N}\left(|\nabla v|^2+ \sigma^{-1}(|x|^{\alpha\sigma}-1)v^2\right)
	 +
	 \frac1{2\sigma+2}\int_{\R^N} |v|^{2\sigma+2}
	 -\frac1{2}\int_{\R^N}\sigma^{-1}(|v|^{2\sigma}-1)v^2\\
	\geq&\frac12I_{1,\sigma}'(v)v=
	\frac12\int_{\R^N}\left(|\nabla v|^2+ \sigma^{-1}(|x|^{\alpha\sigma}-1)v^2 \right)
	 -\frac1{2}\int_{\R^N}  \sigma^{-1}(|v|^{2\sigma}-1)v^2 \\
	&\qquad\qquad\;
	\geq \frac12\int_{\R^N}\left(|\nabla v|^2+  \alpha v^2  \log |x|   \right)
	 -\frac1{2}\int_{\R^N}  \sigma^{-1}(|v|^{2\sigma}-1)v^2 :=I_{2,\sigma}(v),
	\end{aligned}
\end{equation}
in which the last inequality holds because 
\begin{equation}\label{eq511}
	\sigma^{-1}(s^{t\sigma}-1) \geq t \log s\quad  \mbox{for each}\quad (t,s,\sigma) \in\R\times (0,+\infty) \times(0,+\infty). 
\end{equation}
The functional $I_{2,\sigma}(v)$ is well defined on
$Y_r$.
By the same arguments to Lemma \ref{lemma2.1}, there is   $\mu_1>0$ such that 
\begin{equation}\label{eq5.10}
	 \int_{\R^N}\left(|\nabla v|^2+  (\alpha-\alpha_0) v^2  \log |x| +\mu_1 v^2  \right)\d x\geq \half \|v\|_Y^2,\quad v\in Y_r.
\end{equation}
Here we recall that $\alpha_0$ given by \eqref{alpha1} satisfies $\alpha_0\leq 0$ and $1-N<\alpha_0<\alpha$.
On the other hand, similar to \eqref{eq30}, for fixed $p\in (2,2^*)$  satisfying \eqref{eq2.16}  and $p_1=p-\frac{(p-2)\alpha_0}{1-N}\in(2,p]$, when $\sigma$ is small enough,
we have
\begin{equation}\label{eq5.11}
	\begin{aligned}
		\sigma^{-1}&(|v|^{2\sigma}-1)v^2-\alpha_0v^2\log |x|+\mu_1v^2\\
		&= \int_0^1(|v|^{2s\sigma+2}-v^2)\log v^2\d s
	+ v^2\log(v^2|x|^{-\alpha_0}e^{\mu_1})
		\\
		&\leq \max\{0,(|v|^{2\sigma+2}-v^2)\log v^2\}
	+ v^2\log(v^2|x|^{-\alpha_0}e^{\mu_1})
		\\
		 &\leq
	C_3' \left(|v|^p+\|v\|_{H^1(\R^N)}^{p-p_1} |v|^{p_1}\right)\quad \mbox{in}\quad\R^N,
	\end{aligned}
\end{equation}
where $C_3'$ is a constant independent of $\sigma$.
Then by \eqref{eq5.10} and \eqref{eq5.11}, for some $C>0$ independent of $\sigma$,
there holds
$$I_{2,\sigma}(v)\geq \frac14\|v\|_Y^2-C\|v\|_Y^p.
$$
Hence there is $m>0$ independent of $\sigma$ such that
 \begin{equation}\label{Isig}
	 I_{2,\sigma}(v)\geq \frac18 m^2\quad  \mbox{for}\quad \|v\|_Y=m,\quad\mbox{and}\quad
	   I_{1,\sigma}'(v)v\geq \frac14\|v\|_Y^2 \quad \mbox{for}\quad \|v\|_Y\leq m.
 \end{equation}
By Proposition \ref{Pro5.1}, $v_\sigma\in Y_r$.
Then for small $\sigma>0$
$$I_{1,\sigma}(v_\sigma) =\max_{t\geq 0}I_{1,\sigma}(tv_\sigma)\geq \max_{t\geq 0}I_{2,\sigma}(tv_\sigma)\geq I_{2,\sigma}(m\|v_\sigma\|_Y^{-1} v_\sigma)
\geq \frac18 m^2.
$$
This completes the proof of \eqref{eq5.8}. Note that by \eqref{Isig}, we have $\|v_\sigma\|_Y> m$. Otherwise, $0=I_{1,\sigma}'(v_\sigma)v_\sigma\geq \frac14\|v_\sigma\|_Y^2.$ This is a contradiction. Then,
 \[
	\liminf_{\sigma\to0^+} \|v_\sigma\|_Y\geq m>0.
 \]
To show the boundedness of $ \|v_\sigma\|_{Y} $, we note that
$$\limsup_{\sigma\to0^+}\frac1{2\sigma+2}\int_{\R^N} v_\sigma^{2\sigma+2}=\limsup_{\sigma\to0^+}\left(I_{1,\sigma}(v_\sigma)-\frac12I_{1,\sigma}'(v_\sigma)v_\sigma\right)<+\infty.
$$
Note that by the Gagliardo - Nirenberg interpolation inequality
$$\|v_\sigma\|_{L^p(\R^N)}\leq C\|v_\sigma\|_{L^{2\sigma+2}}^{1-\nu_{p,\sigma}} \|\nabla v_\sigma\|_{L^2(\R^N)}^{\nu_{p,\sigma}}\leq C\|\nabla v_\sigma\|_{L^2(\R^N)}^{\nu_{p,\sigma}},
$$
where 
$$\nu_{p,\sigma}=\frac{N(p-2\sigma-2)}{p(2\sigma+2-N\sigma)}\to \nu_p\quad\mbox{as}\quad \sigma\to0^+,$$
$\nu_p$ is as in \eqref{eq:2.14} and $C>0$ is a constant independent of $\sigma$ small.
Then by \eqref{eq510}, \eqref{eq5.10} and \eqref{eq5.11}, we have
\begin{equation*}
	 I_{1,\sigma}(v_\sigma)
	\geq \frac{1}{4}\|v_\sigma\|_Y^2
	-C  (\| v_\sigma\|_{Y}^{p\nu_{p,\sigma}}+\| v_\sigma\|_{Y}^{p-p_1+p_1\nu_{p_1,\sigma}}),
\end{equation*}
where $\nu_{p_1,\sigma}\to \nu_{p_1}$ as $\sigma\to0^+$.
 Since $p-p_1+\nu_{p_1} p_1 < p-2+ \nu_pp<2$ by \eqref{eq2.16}, we have
\eqref{eq5.9}.
\end{proof}
Now we are ready to complete the proof of Theorem \ref{Thm1.5}.
\begin{proof}[Proof of Theorem \ref{Thm1.5}]
	(i) and (ii) follow from Proposition \ref{Pro5.1}. We show (iii).
	By \eqref{eq5.9}, up to a subsequence, we have $v_\sigma\rightharpoonup v$ in $Y_r$. Clearly $v$ is a solution to \eqref{eq1.11'}.
	We show $v_\sigma\to v$ in $Y_r$.
	To achieve this, we claim that there is $C>0$ independent of $\sigma$ small
	such that 
	\begin{equation}\label{eq5.18}
		v_\sigma(x)\leq Ce^{-|x| }.
	\end{equation}
	In fact, by \eqref{eq5.3} and \eqref{eq511}, we have \[-\Delta v_\sigma +(\alpha-\alpha_0)v_\sigma\log |x| \leq v_\sigma\left(\sigma^{-1}(v_\sigma^{2\sigma}-1)-\alpha_0\log |x|\right).\]
	 Similar to \eqref{eq5.11}, we have by the boundedness of $\|v_\sigma\|_{H^1(\R^N)}$,
	 \[-\Delta v_\sigma +(\alpha-\alpha_0) v_\sigma\log |x|\leq C_4(v_\sigma^p+v_\sigma^{p_1}) .\]
Since $v_\sigma(x)\to 0$ uniformly by \eqref{radial}, we can get \eqref{eq5.18} by the comparison
arguments.
Now  from \eqref{eq5.18} and the Sobolev inequality, we have in fact 
$$\int_{\R^N} \sigma^{-1}(v_\sigma^{2\sigma}-1)v_\sigma^2\to \int_{\R^N} v^2\log v^2
\quad \mbox{and}\quad \int_{\R^N}\alpha v_\sigma^2\log|x|  \to \int_{\R^N}\alpha v^2\log |x|.
$$
Then we have 
\begin{align*}
	\limsup_{\sigma\to0^+}\int_{\R^N}|\nabla v_\sigma|^2=&\limsup_{\sigma\to0^+}\left(\int_{\R^N} \sigma^{-1}(v_\sigma^{2\sigma}-1)v_\sigma^2
	-\int_{\R^N}\sigma^{-1}v_\sigma^2(|x|^{\alpha\sigma}-1)\right)\\
	\leq& \limsup_{\sigma\to0^+}\left(\int_{\R^N} \sigma^{-1}(v_\sigma^{2\sigma}-1)v_\sigma^2
	-\int_{\R^N}\alpha v_\sigma^2\log |x|\right)\\
	=&\int_{\R^N} v^2\log v^2-\int_{\R^N}\alpha v^2\log |x|=\int_{\R^N}|\nabla v|^2.
\end{align*}
Then $v_\sigma \to v$ in $Y_r$ up to a subsequence. By \eqref{eq5.9}, $v$ is nontrivial and hence positive.
By Theorem \ref{Thm1.2}, the convergence is  independent of subsequences.
Hence $v_\sigma\to v$ in $Y_r$ as $\sigma\to 0^+$. By the regularity theory,
for any $\gamma''\in (0,1)$,
$v_\sigma$ is bounded in  $C^{2- {\gamma''}/2}(\R^N)\setminus C^{2+\gamma''}(\R^N\setminus B_{\gamma''}(0))$.
Hence by Arz\'ela-Ascoli theorem,
$v_\sigma\to v$ in $C^{2-\gamma''}(\R^N)\setminus C^2(\R^N\setminus B_{\gamma''}(0))$.
By uniqueness of $v$ and $v_\sigma$, the convergences are independent of subsequences.
At last, we remark that
$$ \sigma^{\frac{\alpha}{4 }}u_\sigma( {\sigma}^{-\frac{1}{2 }}\cdot)-v_\sigma=
\sigma^{\frac{\alpha^2\sigma}{8+4\alpha\sigma}}v_\sigma(\sigma^{-\frac{\alpha\sigma}{4+2\alpha\sigma}}\cdot)-v_\sigma\to 0\quad \mbox{in  $Y_r$ and $C^{2-\gamma''}(\R^N)\setminus C^2(\R^N\setminus B_{\gamma''}(0))$}
$$
to complete the proof.
\end{proof}

\appendix
\setcounter{equation}{0}
\section{Appendix}\label{secA}
In this section, assuming (Vab), (V1) and  $\theta \in\Theta$, we give several technical results
which are useful to show uniqueness
on the following equation
\begin{equation}\label{A4.1}
	\left\{
		\begin{aligned}
			 &u''+\frac{N-1}{r}u'- V_\delta(r) u  + B_\delta(r) u\log u^2=0,\quad u>0\quad\text{in}\quad (0,+\infty)\\
			 &u'(0)=0,   r^{-\frac\theta{2} }u(r)\to 0\ \text{as}\ r\to+\infty.
		\end{aligned}
		\right.
	\end{equation}
 
	\begin{Lem}\label{Cor2.1}
		Let $u(r)$  solves \eqref{P3}. Then there is $R>0$ such that $u'(r)<0$ {for} 
		$r\geq R$.	 Moreover,
		$$\lim_{r\to\infty} r^{N-1}u'(r)=\liminf_{r\to\infty} r^{N-1} V_\delta(r)u(r)=0. 
		$$
	\end{Lem}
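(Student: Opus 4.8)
The plan is to work on a half-line $[R,\infty)$ on which, by (Vab), the coefficients have stabilized to $V_\delta=V$ and $B_\delta=1$, so that the equation reduces to $(r^{N-1}u')'=r^{N-1}u\,(V(r)-\log u^2)$, and to feed into this the super-Gaussian decay of $u$ provided by Proposition~\ref{LemA1}(ii). The first step is to show that the factor $V(r)-\log u^2(r)$ is positive for $r$ large. Fixing $\tau=1/4$, Proposition~\ref{LemA1}(ii) gives a constant $C_\tau>0$ with $u(r)\le C_\tau e^{-r^2/4}$, hence $-\log u^2(r)\ge r^2/2-C_1$ for $r$ large; and since $\theta\in\Theta$ one has $V(r)\ge\theta\log r-C_0$ for $r$ large. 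Adding these bounds, $V(r)-\log u^2(r)\to+\infty$, so we may fix $R$ larger than the supports of $a$ and $b$ --- beyond which $V_\delta\equiv V$ and $B_\delta\equiv1$ --- and large enough that $V-\log u^2>0$ on $[R,\infty)$. Together with $u>0$ this shows $(r^{N-1}u')'>0$, so that $r^{N-1}u'$ is strictly increasing on $[R,\infty)$.

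Next I would pin down the sign of $u'$ and the limit of $r^{N-1}u'$. If $r_0^{N-1}u'(r_0)>0$ for some $r_0\ge R$, then monotonicity gives $u'(r)\ge c\,r^{1-N}$ on $[r_0,\infty)$ for some $c>0$, and integrating once contradicts $u(r)\to0$: for $N=2$, $u(r)\to+\infty$, and for $N\ge3$, $\liminf_{r\to\infty}u(r)\ge u(r_0)>0$. Hence $r^{N-1}u'\le0$ on $[R,\infty)$, and since it is strictly increasing it cannot vanish there, so $u'(r)<0$ for $r\ge R$. Being increasing and $\le0$, $r^{N-1}u'$ has a limit $L\le0$ as $r\to\infty$; if $L<0$, then $u'(r)\le(L/2)r^{1-N}$ for $r$ large, and integrating once more contradicts $u(r)\to0$ --- directly when $N=2$, and when $N\ge3$ because the term $\frac{L}{2(N-2)}\rho^{2-N}$ which survives as $r\to\infty$ decays only polynomially in the lower endpoint $\rho$ and therefore dominates $u(\rho)\le C_\tau e^{-\tau\rho^2}$ for $\rho$ large. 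Thus $\lim_{r\to\infty}r^{N-1}u'(r)=0$.

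For the remaining identity, I would integrate $(r^{N-1}u')'=r^{N-1}u\,(V-\log u^2)$ over $[R,\infty)$: the left-hand side equals $\lim_{r\to\infty}r^{N-1}u'(r)-R^{N-1}u'(R)=-R^{N-1}u'(R)$, which is finite (indeed positive), so the nonnegative integrand $r^{N-1}u\,(V-\log u^2)$ is integrable on $[R,\infty)$ and hence $\liminf_{r\to\infty}r^{N-1}u(r)\bigl(V(r)-\log u^2(r)\bigr)=0$. On the other hand, $t\mapsto-t\log t^2$ is increasing on $(0,e^{-1})$, so the bound $u(r)\le C_\tau e^{-\tau r^2}$ gives $r^{N-1}u(r)\bigl|\log u^2(r)\bigr|=O\bigl(r^{N+1}e^{-\tau r^2}\bigr)\to0$. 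Writing $r^{N-1}V(r)u(r)=r^{N-1}u(r)\bigl(V(r)-\log u^2(r)\bigr)+r^{N-1}u(r)\log u^2(r)$ and using $\liminf(f+g)=\liminf f+\lim g$ when $\lim g$ exists, we obtain $\liminf_{r\to\infty}r^{N-1}V(r)u(r)=0$; since $V_\delta=V$ on $[R,\infty)$ this is precisely $\liminf_{r\to\infty}r^{N-1}V_\delta(r)u(r)=0$.

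The delicate point is this last step. Assumption (V1) imposes no upper bound on $V$, so $r^{N-1}V_\delta u$ need not converge and only its $\liminf$ is accessible; what makes it work is that the combination $V-\log u^2$ occurring in the equation is at once sign-definite near infinity and integrable (after one integration) against $r^{N-1}u$, while the super-Gaussian bound from Proposition~\ref{LemA1}(ii) is just strong enough to reduce the auxiliary term $r^{N-1}u\log u^2$ to a genuine limit.
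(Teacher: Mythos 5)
Your proof is correct and follows essentially the same route as the paper: positivity of $r^{N-1}u\,(V_\delta-\,B_\delta\log u^2)$ for large $r$, monotonicity of $r^{N-1}u'$ to get $u'<0$ and $\lim_{r\to\infty}r^{N-1}u'=0$, and then the Gaussian decay of Proposition \ref{LemA1}(ii) to discard the $r^{N-1}u\log u^2$ term and extract the $\liminf$. The only differences are cosmetic (you obtain the vanishing limit by comparing a polynomial lower bound with the Gaussian decay, and get $\liminf r^{N-1}W=0$ from integrability of the nonnegative integrand instead of the paper's mean-value sampling on unit intervals), plus a harmless sign slip: the surviving term should read $\frac{-L}{2(N-2)}\rho^{2-N}$.
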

		\begin{proof}
		  By Proposition \ref{LemA1},
		  we can assume that $R$ is sufficiently large such that for $r\geq R$,
		  \begin{equation}\label{eqA11}
			  (r^{N-1} u')'=r^{N-1}W(r)>0,
		  \end{equation}
	  where $W(r):=V_\delta(r) u(r)  - B_\delta(r) u(r)\log u^2(r).$
	  Then $r^{N-1} u'$ is strictly increasing in $[R, +\infty)$.
	  If $u'\left(r_{1}\right) \geq 0$ for some $r_{1}>R$,
	  then for $r\geq r_1$, we have $u'(r)> r^{1-N}r_{1}^{N-1} u'(r_{1})\geq 0$.
	  Hence for $r\geq r_1$ we obtain a contradiction that
	   $0<u(r_1)< u(r)\to0$ as $r\to+\infty$. 
	  Thus, we see that $u'(r)<0$ for $r>R$. 
	  
		To show $\lim _{r \rightarrow \infty} r^{N-1} u'(r) =0$. 
		By contradiction and the monotonicity of $r^{N-1}u'$,
		we assume that there are $\delta>0$ and $r_2>R$ such that 
		$r^{N-1} u'(r)<-\delta$ for $r\geq r_2$.
	  Then as $r\to \infty$, we get a contradiction that
	  \[1\geq e^ru(r)=-e^r\int_{r}^{+\infty} u'(s)\d s> \delta e^r\int_{r}^{2r} s^{1-N} \d s\to+\infty.\]
	  
	  Since $r^{N-1} u'(r) \to 0$  as $r\to \infty$, by \eqref{eqA11}, we can find
	   $r_n\in (R+n-1, R+n)$, such that 
	   $$r_n^{N-1}W(r_n)=\int_{R+n-1}^{R+n}r^{N-1}W(r)\d r\to 0.
	   $$
	   
	Together with \eqref{eqA11} and Proposition \ref{LemA1},  we have 
\[	 0=\liminf_{r\to\infty} r^{N-1}W(r)=\liminf_{r\to\infty} r^{N-1}V(r)u(r). \qedhere
\]
 \end{proof}

\begin{Lem}\label{lem4.2}
	
	Suppose that $u_1,u_2$ are two solutions of \eqref{A4.1} satisfying $u_1(0)<u_2(0)$,
	then  
	$$\rho:=\inf\set{r\in(0,\infty) | u_1(r)> u_2(r)}
	 $$ exists and
	satisfies
	$u_1(\rho)=u_2(\rho)$,  and
	$$u_1<u_2,\quad 
	\frac{\d}{\d r}\left(\frac{u_1}{u_2}\right)>0\quad \mbox{in} \ (0,\rho).
	$$
	Moreover if
	$$u_1>u_2>0\ \text{in}\ (\rho,\infty),
	$$
	then 
	$$\frac{\d}{\d r}\left(\frac{u_1}{u_2}\right)>0\ \text{in} \ (0,\infty).
	$$
	\end{Lem}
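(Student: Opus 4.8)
The plan is to control the sign of the Wronskian-type quantity
\[
W(r):=r^{N-1}\bigl(u_1u_2'-u_2u_1'\bigr),
\]
which is the natural object here because $\tfrac{\d}{\d r}(u_1/u_2)=(u_1'u_2-u_1u_2')/u_2^2=-W(r)/(r^{N-1}u_2^2)$, so every monotonicity claim about $u_1/u_2$ is equivalent to a sign statement $W<0$ on the relevant interval. Writing the equations in \eqref{A4.1} as $(r^{N-1}u_i')'=r^{N-1}\bigl(V_\delta u_i-B_\delta u_i\log u_i^2\bigr)$, the product rule yields the clean identity
\[
W'(r)=u_1\,(r^{N-1}u_2')'-u_2\,(r^{N-1}u_1')'=r^{N-1}B_\delta(r)\,u_1u_2\,\log\frac{u_1^2}{u_2^2},
\]
so, since $B_\delta\ge 1$ and $u_1,u_2>0$, $W'$ is positive where $u_1>u_2$ and negative where $u_1<u_2$. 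I would then record the two endpoint facts: $W(r)\to0$ as $r\to0^+$ (since $u_i'(0)=0$ and $u_i'$ is continuous up to $0$ by the regularity in Theorem \ref{Thm1}, while $r^{N-1}\to0$), and $W(r)\to0$ as $r\to\infty$ (since $u_i(r)\to0$ and $r^{N-1}u_i'(r)\to0$ by Lemma \ref{Cor2.1}).

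Next I would prove $\rho<\infty$. Suppose not, i.e. $u_1\le u_2$ on $(0,\infty)$. If $u_1(r_0)=u_2(r_0)$ for some $r_0>0$, then $r_0$ is an interior maximum of $u_1-u_2$, so $u_1'(r_0)=u_2'(r_0)$; since both solutions are positive near $r_0$ the nonlinearity is locally Lipschitz there, and uniqueness for the initial value problem (cf. Remark \ref{Rek2.1}) forces $u_1\equiv u_2$, contradicting $u_1(0)<u_2(0)$. Hence $u_1<u_2$ strictly on $(0,\infty)$, so $W'<0$ on $(0,\infty)$; together with $W(0^+)=0$ this yields $W<0$ on $(0,\infty)$ and in particular $\limsup_{r\to\infty}W(r)<0$, contradicting $W(r)\to0$. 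Therefore $\{r:u_1(r)>u_2(r)\}\neq\emptyset$; since $u_1<u_2$ near $0$ it is bounded away from $0$, so $\rho\in(0,\infty)$. By definition of $\rho$, $u_1\le u_2$ on $(0,\rho)$, and the same tangency--uniqueness argument excludes equality at interior points, so $u_1<u_2$ on $(0,\rho)$; passing to the limit from the left gives $u_1(\rho)\le u_2(\rho)$, while a sequence $r_n\downarrow\rho$ with $u_1(r_n)>u_2(r_n)$ gives $u_1(\rho)\ge u_2(\rho)$, hence $u_1(\rho)=u_2(\rho)$. Finally, $u_1<u_2$ on $(0,\rho)$ gives $W'<0$ there, so $W<0$ on $(0,\rho)$ by $W(0^+)=0$, i.e. $\tfrac{\d}{\d r}(u_1/u_2)>0$ on $(0,\rho)$.

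For the final assertion I would argue as follows. Assuming $u_1>u_2>0$ on $(\rho,\infty)$, we have $W'>0$ there, so $W$ is strictly increasing on $(\rho,\infty)$; since $W(r)\to0$ as $r\to\infty$, this forces $W<0$ on $(\rho,\infty)$. On the other hand $W$ is strictly decreasing on $(0,\rho)$ with $W(0^+)=0$, so $W(\rho)=\lim_{r\to\rho^-}W(r)<0$. Hence $W<0$ on all of $(0,\infty)$, which is exactly $\tfrac{\d}{\d r}(u_1/u_2)=-W/(r^{N-1}u_2^2)>0$ on $(0,\infty)$, and the equivalent form $\tfrac{\d}{\d r}(u_2/u_1)^2<0$ used in the proof of Theorem \ref{Thm1.2} follows immediately.

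I expect the main obstacle to be the two endpoint limits of $W$, especially $W(r)\to0$ as $r\to\infty$: this is precisely where the super-Gaussian decay of Proposition \ref{LemA1}(ii) and the identities $r^{N-1}u_i'(r)\to0$, $\liminf_{r\to\infty}r^{N-1}V_\delta(r)u_i(r)=0$ of Lemma \ref{Cor2.1} are indispensable — without them one could not rule out that $\{u_1>u_2\}$ is empty, nor that the increasing tail of $W$ crosses zero. A secondary subtlety is the tangency-exclusion step, where the non-Lipschitz character of $s\mapsto s\log s^2$ at $s=0$ must be sidestepped by using that any contact point of $u_1$ and $u_2$ occurs where both functions are strictly positive.
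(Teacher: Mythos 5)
Your proof is correct and follows essentially the same route as the paper: both hinge on the Wronskian-type quantity $r^{N-1}(u_1'u_2-u_2'u_1)$ (yours is its negative), the sign identity for its derivative coming from the two ODEs, and the vanishing limits at $0$ and $\infty$ supplied by Proposition \ref{LemA1} and Lemma \ref{Cor2.1}. The only cosmetic difference is that you rule out the empty-set case via a tangency/IVP-uniqueness argument giving strict inequality, whereas the paper concludes from $W'\geq 0$ and the two zero limits that $W\equiv 0$ and then contradicts $W'\not\equiv 0$; both are sound.
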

	\begin{proof}
	The proof is similar with \cite[Lemma 1.2]{KT}. 
	To show the existence of $\rho\in(0,\infty)$, we only need to prove
	\begin{equation}\label{sig}
		\set{r\in(0,\infty) | u_1(r)> u_2(r)}\neq\emptyset.
	\end{equation}
	We note that
	\begin{equation}\label{ddr}
	\begin{aligned}
	\frac{\d }{\d r}\left((r^{N-1}u_2^2)\frac{\d }{\d r}\bigg(\frac{u_1}{u_2}\bigg)\right)&=\frac{\d}{\d r}\left(r^{N-1}(u_1'u_2-u_2'u_1)\right)\\
	&=(r^{N-1}u_1')'u_2-(r^{N-1}u_2')'u_1\\
	&=B_\delta r^{N-1} u_1u_2[(\log u_2^2-\log u_1^2)].
	\end{aligned}
	\end{equation}
	We know from $u_1'(0)=u_2'(0)=0$, Proposition \ref{LemA1} (ii)  and Lemma \ref{Cor2.1} that 
	$$\lim_{r\to0}(r^{N-1}u_2^2)\frac{\d}{\d r}\left(\frac{u_1}{u_2}\right)=0,\quad
	 \lim_{r\to\infty} (r^{N-1}u_2^2)\frac{\d}{\d r}\left(\frac{u_1}{u_2}\right) =0.
	$$
	If \eqref{sig} is not true, then  by \eqref{ddr}
	\begin{equation*}\frac{\d }{\d r}\left((r^{N-1}u_2^2)\frac{\d }{\d r}\bigg(\frac{u_1}{u_2}\bigg)\right)\geq0.
	\end{equation*}
	So,
	$$ (r^{N-1}u_2^2)\frac{\d }{\d r}\bigg(\frac{u_1}{u_2}\bigg)\equiv 0\quad \mbox{in}\ (0,+\infty).
	$$
	This contradicts to
	$$\frac{\d }{\d r}\big[(r^{N-1}u_2^2)\frac{\d }{\d r}(\frac{u_1}{u_2})\big]=B_\delta r^{N-1} u_1u_2[(\log u_2^2-\log u_1^2)]\not\equiv 0.
	$$   
	If $u_1>u_2$ in $(\rho,+\infty)$, then by \eqref{ddr}, the function
	$(r^{N-1}u_2^2)\frac{\d}{\d r}(\frac{u_1}{u_2})$ is strictly increasing in $(0,\rho)$ and strictly decreasing in $(\rho,+\infty)$.
	Therefore, $\frac{\d}{\d r}(\frac{u_1}{u_2})>0$ in $(0,+\infty)$.
	\end{proof}
	From the proof of Lemma \ref{lem4.2} we can also get:
	\begin{Lem}\label{lemA2'}
	Suppose $\rho>0$ and  $u_1,u_2$ are two solutions of 
	\begin{align*}
			 &u''+\frac{N-1}{r}u'-V_\delta u+B_\delta u\log u^2=0,\quad \text{in}\ (0,\rho),\\
			 &u'(0)=0,\quad u>0 \quad \text{in}\ (0,\rho)
		\end{align*}
	such that 
	$$u_1<u_2\quad \text{in}\ [0,\rho).
	$$
	Then
	$$\frac{\d}{\d r}\left(\frac{u_1}{u_2}\right)>0\quad \text{in} \ (0,\rho).
	$$
	\end{Lem}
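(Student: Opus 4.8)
The plan is to re-use, essentially verbatim, the computation underlying the proof of Lemma \ref{lem4.2}, now on the finite interval $(0,\rho)$. First I would record the Wronskian-type identity obtained by multiplying the equation for $u_1$ by $r^{N-1}u_2$, the equation for $u_2$ by $r^{N-1}u_1$, and subtracting; exactly as in \eqref{ddr} this gives
\[
\frac{\d }{\d r}\left((r^{N-1}u_2^2)\frac{\d }{\d r}\Big(\frac{u_1}{u_2}\Big)\right)
= B_\delta\, r^{N-1} u_1u_2\big(\log u_2^2-\log u_1^2\big)\qquad\text{in }(0,\rho).
\]
This step is purely algebraic and uses only that $u_1,u_2$ solve the ODE on $(0,\rho)$; no decay or global information is needed.

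Next I would exploit the sign hypothesis $0<u_1<u_2$ on $[0,\rho)$. Since $B_\delta=1+\delta b>0$ and $\log u_2^2-\log u_1^2=2\log(u_2/u_1)>0$ throughout $(0,\rho)$, the right-hand side above is strictly positive there. Hence the function
\[
W(r):=(r^{N-1}u_2^2)\frac{\d }{\d r}\Big(\frac{u_1}{u_2}\Big)=r^{N-1}\big(u_1'u_2-u_1u_2'\big)
\]
is strictly increasing on $(0,\rho)$.

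Then I would pin down the behaviour of $W$ at the singular endpoint $r=0$. Because each $u_i$ is bounded near $0$ with $u_i'(0)=0$ (so $u_i'(r)\to 0$ as $r\to0^+$), and $N\ge 2$, one has $\lim_{r\to0^+}W(r)=0$. Combined with the strict monotonicity of $W$, this forces $W(r)>0$ for every $r\in(0,\rho)$; dividing by $r^{N-1}u_2^2>0$ yields $\frac{\d}{\d r}(u_1/u_2)>0$ on $(0,\rho)$, which is the assertion.

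The argument is short and I do not expect a genuine obstacle; the only point requiring a little care is the boundary value $W(0^+)=0$, i.e. that the Wronskian $r^{N-1}(u_1'u_2-u_1u_2')$ vanishes at the (possibly singular) origin. This is where the normalization $u_i'(0)=0$ together with $N\ge2$ enters, and it is exactly the local regularity input already used for \eqref{ddr} in the proof of Lemma \ref{lem4.2} (guaranteed, for instance, by the regularity and uniqueness for the initial value problem noted in Remark \ref{Rek2.1}).
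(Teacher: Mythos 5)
Your proposal is correct and is essentially the paper's own argument: the paper proves Lemma \ref{lemA2'} by remarking that it follows from the proof of Lemma \ref{lem4.2}, namely the Wronskian identity \eqref{ddr}, the strict positivity of its right-hand side when $0<u_1<u_2$, and the vanishing of $(r^{N-1}u_2^2)\frac{\d}{\d r}(u_1/u_2)$ at the origin coming from $u_1'(0)=u_2'(0)=0$. Your handling of the endpoint $r=0$ matches the paper's use of the same boundary information, so there is nothing to add.
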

	\begin{Lem}\label{lemA3}
	For $b_0>0$, let $w$ be the  unique solution to
		the initial value problem
	\begin{equation}
		\label{eq4.9}w''+\frac{N-1}{r}w'+b_0w=0,\quad  w(0)=1,\quad  w'(0)=0.
	\end{equation}
	 Then there is $r_1>0$  such that
	$w(r_1)=0$,    
	$w>0$ in $(0,r_1)$ and $w'<0$ in $(0,r_1]$.
	\end{Lem}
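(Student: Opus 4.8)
The plan is to read the sign of $w'$ directly off equation \eqref{eq4.9} and then force a first zero of $w$ by an oscillation comparison. First I would recast \eqref{eq4.9} in divergence form, $(r^{N-1}w')'=-b_0\,r^{N-1}w$, integrate over $(0,r)$ and use $w'(0)=0$ to get
\[
	r^{N-1}w'(r)=-b_0\int_0^r s^{N-1}w(s)\,\mathrm{d}s,\qquad r>0.
\]
Since $w(0)=1$ and $w$ is continuous, $w>0$ holds on a maximal interval $(0,r_1)$ with $r_1\in(0,+\infty]$, and on that interval the displayed identity gives $w'(r)<0$, so $w$ is strictly decreasing there; moreover, because $\int_0^{r_1}s^{N-1}w(s)\,\mathrm{d}s>0$, the same identity yields $w'(r_1)<0$ at the right endpoint as well. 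Consequently, once $r_1<+\infty$ is established, all three claims — $w(r_1)=0$, $w>0$ on $(0,r_1)$, and $w'<0$ on $(0,r_1]$ — follow at once, and the lemma is reduced to proving $r_1<+\infty$.

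To show $r_1<+\infty$ I would argue by contradiction, supposing $w>0$ on all of $(0,+\infty)$, and pass to Liouville normal form: the substitution $v(r)=r^{(N-1)/2}w(r)$ transforms \eqref{eq4.9} into
\[
	v''+Q(r)\,v=0,\qquad Q(r)=b_0-\frac{(N-1)(N-3)}{4r^2},
\]
with $v>0$ on $(0,+\infty)$ and $Q(r)\to b_0>0$ as $r\to+\infty$. I would then fix $R>0$ with $Q(r)\ge b_0/2$ for $r\ge R$ and compare, on $[R,+\infty)$, with $z(r)=\sin\!\big(\sqrt{b_0/2}\,(r-R)\big)$, a solution of $z''+\tfrac{b_0}{2}z=0$ having consecutive zeros at $R$ and $R+\pi\sqrt{2/b_0}$. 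Since $Q\ge b_0/2$ on that interval, the Sturm comparison theorem forces $v$ to vanish somewhere in $(R,R+\pi\sqrt{2/b_0}]$ (it cannot be proportional to $z$, as $v(R)>0=z(R)$), contradicting $v>0$. Hence $r_1<+\infty$, which finishes the proof.

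The main obstacle is exactly this final step, namely excluding a globally positive solution: the monotonicity of $w$ only gives $w'(r)\le-\tfrac{b_0}{N}r\,w(r)$, hence the Gaussian-type bound $w(r)\le w(1)e^{-\frac{b_0}{2N}(r^2-1)}$, which is fully compatible with $w>0$, and the energy $E(r)=\tfrac12(w')^2+\tfrac{b_0}{2}w^2$, although nonincreasing since $E'(r)=-\tfrac{N-1}{r}(w')^2\le 0$, only yields $E(r)\ge E(1)r^{2-2N}$, a rate that already matches the decay of $(w')^2$ and so produces no contradiction without a delicate comparison of constants. The Liouville-plus-Sturm route bypasses all of this; alternatively, one could invoke the explicit solution $w(r)=c\,r^{(2-N)/2}J_{(N-2)/2}(\sqrt{b_0}\,r)$ together with the known first positive zero of the Bessel function $J_{(N-2)/2}$, but the comparison argument is self-contained. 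Finally, the existence and uniqueness of the regular solution of the singular initial value problem \eqref{eq4.9} (an equation of Bessel type) is classical and would simply be quoted.
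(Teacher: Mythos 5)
Your proof is correct, but it follows a genuinely different route from the paper. The paper simply identifies $w$ with a rescaled first Dirichlet eigenfunction: if $\phi$ is the (radial) principal eigenfunction of $-\Delta$ on $B_1(0)$ with eigenvalue $\lambda_1$, then $\phi(0)^{-1}\phi(\cdot/r_1)$ with $r_1=\sqrt{\lambda_1/b_0}$ solves \eqref{eq4.9}, hence coincides with $w$ on $[0,r_1]$ by uniqueness of the initial value problem, and the claimed sign properties ($w>0$ on $(0,r_1)$, $w(r_1)=0$, $w'<0$ on $(0,r_1]$) are read off the standard properties of $\phi$. You instead argue intrinsically: the divergence-form identity $r^{N-1}w'(r)=-b_0\int_0^r s^{N-1}w\,\d s$ gives strict monotonicity up to and including the first zero, and finiteness of the first zero follows from the Liouville substitution $v=r^{(N-1)/2}w$ together with Sturm comparison against $z''+\tfrac{b_0}{2}z=0$ on $[R,\infty)$, where $Q(r)=b_0-\frac{(N-1)(N-3)}{4r^2}\ge b_0/2$; your exclusion of the proportionality case via $v(R)>0=z(R)$ is the right way to close that step, and restricting the comparison to large $r$ avoids the singularity of $Q$ at the origin. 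The trade-off: the paper's argument is essentially two lines once one quotes the known facts about the principal eigenfunction (whose proof of $\phi'<0$ is, incidentally, exactly your integrated identity), while yours is self-contained ODE oscillation theory and even yields an explicit upper bound $r_1\le R+\pi\sqrt{2/b_0}$ for the first zero, at the cost of invoking the Sturm comparison theorem. Either proof establishes the lemma; no gap.
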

	\begin{proof}
	Let $\lambda_1$ be the first eigenvalue of
	\begin{equation*}
		 \left\{
			\begin{aligned}
				-\Delta \phi&=\lambda_1\phi	&&\mbox{in}\quad B_1(0),\\
				\phi&=0 &&\mbox{on}\quad \partial B_1(0),
			\end{aligned}
			\right.
	\end{equation*}	 
	with the corresponding eigenfunction $\phi(x)=\phi(\abs x)$ which satisfies
	 $\phi(r)>0$ in $(0,1)$, $\phi(1)=0$, $\phi'(0)=0$ and $\phi'(r)<0$, $0<r\leq 1$. 
	 Then
	$\psi=\phi^{-1}(0)\phi(r_1^{-1}\cdot)$ with $r_1:= b_0^{-\half}{\lambda_1}^\half$ satisfies \eqref{eq4.9}
	and coincides with  $w$ in $(0,r_1]$.
	\end{proof}
	For $\beta>0$, we consider the following initial value problem:
\begin{equation}\label{IV}
    \left\{
    \begin{aligned}
         &u''+\frac{N-1}{r}u'-V_\delta(r)u+B_\delta(r)u\log u^2=0,\\
         &u(0)=\beta,\quad
         u'(0)=0.
    \end{aligned}\right. 
    \end{equation}
	By Remark \ref{Rek2.1}, we can denote the unique solution by  $u(r;\beta)$.
    The next lemma sketches  the  graph of $u(r;\beta)$ for large $\beta$.
	\begin{Lem}\label{lemA4}
		Denoting
		$$v(r;\beta):= \beta^{-1}u((\log \beta)^{-\half}r;\beta),
		$$
		we have $v(r;\beta)\to w(r)$ in $C_{loc}([0,+\infty))\cap C^1_{loc}((0,\infty))$ as $\beta\to\infty$, where $w$ is the solution to
		the problem \eqref{eq4.9} with $b_0=2B_\delta(0)$ in Lemma \ref{lemA3}.
	\end{Lem}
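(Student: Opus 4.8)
The plan is to derive the ODE satisfied by the rescaled profile $v(\cdot\,;\beta)$, establish uniform-in-$\beta$ $C^1$-bounds on compact $r$-intervals, and then pass to the limit by the Arzel\`a--Ascoli theorem, identifying the limit through an integral equation and the uniqueness in Lemma~\ref{lemA3}.

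First I would compute the equation for $v$. Writing $L=(\log\beta)^{\half}$ and substituting $u(\rho;\beta)=\beta\,v(L\rho;\beta)$ into the equation in \eqref{IV}, then dividing by $\beta\log\beta$, one checks that $v=v(r;\beta)$ solves
\begin{equation}\label{eq:rv}
	v''+\frac{N-1}{r}v'-\frac{V_\delta(r/L)}{\log\beta}\,v+B_\delta(r/L)\,v\Big(2+\frac{\log v^2}{\log\beta}\Big)=0,\qquad v(0;\beta)=1,\quad v'(0;\beta)=0,
\end{equation}
equivalently, after integrating $(r^{N-1}v')'$ twice,
\begin{equation}\label{eq:rvint}
	v(r;\beta)=1+\int_0^r\sigma^{1-N}\int_0^\sigma\tau^{N-1}\Big[\frac{V_\delta(\tau/L)}{\log\beta}\,v(\tau;\beta)-B_\delta(\tau/L)\,v(\tau;\beta)\Big(2+\frac{\log v(\tau;\beta)^2}{\log\beta}\Big)\Big]\,\d\tau\,\d\sigma .
\end{equation}
Fix $R_0>0$; for $\beta$ large $R_0/L$ lies in the neighbourhood of $0$ where $b\equiv1$, so $B_\delta(\tau/L)\equiv B_\delta(0)=1+\delta$ for all $\tau\in[0,R_0]$, and on $[0,R_0]$ equation \eqref{eq:rv} reads $v''+\tfrac{N-1}{r}v'+2B_\delta(0)v=\tfrac{V_\delta(r/L)}{\log\beta}v-B_\delta(0)v\tfrac{\log v^2}{\log\beta}$, i.e. the fixed linear equation behind Lemma~\ref{lemA3} plus two perturbations. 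Existence and uniqueness of $v(\cdot\,;\beta)$ follow from Remark~\ref{Rek2.1}.

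The heart of the matter is a uniform a priori bound, and I expect this to be the main obstacle: the nonlinearity $u\log u^2$ is not Lipschitz, and $V_\delta$ may be singular at $0$, the singularity accumulating at $r=0$ under the rescaling. I would run a continuation argument on $[0,R_0]$: as long as $|v(\cdot\,;\beta)|\le M:=2\|w\|_{L^\infty(0,R_0)}+1$ on a subinterval, the logarithmic term is bounded there (using $|s\log s^2|\le\tfrac2e+2s^2$, valid for all $s\in\R$), while for the potential term the crucial estimate is
\[
	\int_0^\sigma\tau^{N-1}\,\frac{|V_\delta(\tau/L)|}{\log\beta}\,\d\tau=\frac{L^N}{\log\beta}\int_0^{\sigma/L}t^{N-1}|V_\delta(t)|\,\d t\le C\,\sigma^{\,N-N/q}\,(\log\beta)^{\frac{N}{2q}-1}\Big(\int_0^{R_0/L}t^{N-1}|V_\delta(t)|^q\,\d t\Big)^{1/q},
\]
obtained from H\"older's inequality and $V(|\cdot|)\in L^q_{loc}(\R^N)$ (assumption (V1)); since $q>N$ one has $\tfrac{N}{2q}-1<0$, so the right-hand side tends to $0$ as $\beta\to\infty$, and the exponent $N-N/q$ is exactly what is needed for $\sigma^{1-N}\cdot(\,\cdot\,)$ to remain integrable in \eqref{eq:rvint}. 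Both difficulties are thus tamed only because the offending quantities carry the factor $1/\log\beta$, the gain $(\log\beta)^{\frac{N}{2q}-1}\to0$ coming from the scaling $L=(\log\beta)^{\half}$. Subtracting from \eqref{eq:rvint} the corresponding identity $w(r)=1-2B_\delta(0)\int_0^r\sigma^{1-N}\int_0^\sigma\tau^{N-1}w\,\d\tau\,\d\sigma$ and applying a Gronwall inequality to $\max_{[0,r]}|v(\cdot\,;\beta)-w|$, this quantity is $o(1)$ as $\beta\to\infty$ uniformly on $[0,R_0]$; hence $|v(\cdot\,;\beta)|<M$ there, the continuation closes, and $v(\cdot\,;\beta)$ exists on $[0,R_0]$ with $\|v(\cdot\,;\beta)\|_{L^\infty(0,R_0)}\le M$ for all large $\beta$.

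With the uniform bound established, the identity $v'(r;\beta)=r^{1-N}\int_0^r\tau^{N-1}[\cdots]\,\d\tau$ together with the same estimate gives $|v'(r;\beta)|\le Cr^{1-N/q}+C_M r$ near $0$ and $|v'(\cdot\,;\beta)|\le C$ on $[0,R_0]$, uniformly in $\beta$; so $\{v(\cdot\,;\beta)\}$ is bounded and equicontinuous on $[0,R_0]$, and some sequence $v(\cdot\,;\beta_k)\to\tilde v$ in $C([0,R_0])$ by Arzel\`a--Ascoli. Passing to the limit in \eqref{eq:rvint} — the potential term vanishing by the estimate above, $B_\delta(\tau/L)\equiv B_\delta(0)$, and $\tfrac{\log v^2}{\log\beta}\to0$ by dominated convergence — gives $\tilde v(r)=1-2B_\delta(0)\int_0^r\sigma^{1-N}\int_0^\sigma\tau^{N-1}\tilde v(\tau)\,\d\tau\,\d\sigma$, i.e. $\tilde v$ solves \eqref{eq4.9} with $b_0=2B_\delta(0)$; by uniqueness of that linear problem, $\tilde v=w$. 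Since the limit does not depend on the subsequence, $v(\cdot\,;\beta)\to w$ in $C([0,R_0])$, and passing likewise to the limit in the formula for $v'$ gives $v'(\cdot\,;\beta)\to w'$ uniformly on $[\varepsilon,R_0]$ for every $\varepsilon>0$. As $R_0>0$ is arbitrary, this yields $v(\cdot\,;\beta)\to w$ in $C_{loc}([0,+\infty))\cap C^1_{loc}((0,+\infty))$, as claimed.
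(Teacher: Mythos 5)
Your proposal is correct, and its skeleton matches the paper's: rescale, derive the equation $v''+\frac{N-1}{r}v'-\frac{V_\delta(r/L)}{\log\beta}v+B_\delta(r/L)v\big(2+\frac{\log v^2}{\log\beta}\big)=0$, exploit the factor $(\log\beta)^{-1}$ together with H\"older and the local $L^q$-integrability of $V$ to kill the singular potential term, and identify the limit with the solution of \eqref{eq4.9}. The difference lies in how convergence is extracted. The paper proves only a uniform bound: a step-continuation argument shows that on any interval where $|v|\le C$ the solution can grow by at most $1$ over a step length independent of $\beta$ (H\"older in $L^N$ suffices there, since only boundedness, not smallness, of the potential contribution is needed), then gets uniform $C^1_{loc}$ bounds from the integral formula for $v'$, and concludes by Arzel\`a--Ascoli plus uniqueness of the linear initial value problem \eqref{eq4.9}. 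You instead anchor the continuation at $M=2\|w\|_{L^\infty}+1$ and compare $v$ directly with $w$ through the integral identity and Gronwall, using the sharper estimate $(\log\beta)^{\frac{N}{2q}-1}\sigma^{N-N/q}$ (here $q>N$ is genuinely used, whereas the paper only needs $L^N_{loc}$); this simultaneously closes the continuation and yields uniform convergence on compacts with an explicit rate, making your subsequent Arzel\`a--Ascoli paragraph redundant. Both routes are sound: the paper's is softer and does not require knowing $\|w\|_{L^\infty}$ in advance, while yours is quantitative and dispenses with compactness. Minor remarks: the phrase ``$\frac{\log v^2}{\log\beta}\to0$ by dominated convergence'' should be read as the uniform bound $|v\log v^2|\le \frac2e+2v^2$ divided by $\log\beta$ (which you had already invoked), since $\log v^2$ itself is not controlled where $v$ is small; and, as in the paper, the a priori bound is what guarantees that the solution of \eqref{IV} extends across each compact interval, so the continuation argument is also doing duty for global existence.
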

	\begin{proof}
		Noting that $v(r;\beta)$ satisfies
		$v(0)=1, v'(0)=0$ and
		\begin{align*}
		 (r^{N-1}v')'+2 B_\delta((\log \beta)^{-\frac12}r)r^{N-1} v 
		 =(\log \beta)^{-1}r^{N-1} \left(V_\delta((\log \beta)^{-\half} r) v -  B_\delta((\log \beta)^{-\half} r) v\log v^2\right).
		\end{align*}
		We claim that  $v(\cdot;\beta)$ is bounded in $C_{loc}([0,+\infty))$ for $\beta\geq e$, i.e., for any $T>0$,
		$\max_{r\in[0,T]} |v(r;\beta)|$ is bounded for $\beta\geq e$.
		It suffices to show that if 
		$\max_{r\in[0,r_0]} |v(r;\beta)|\leq C$ for some $r_0\geq 0$ and $C>0$, then   
		there is $\delta>0$ independent of $\beta$
		such that 
		\begin{equation}\label{rdelta}
			\max_{r\in[0,r_0+\delta]} |v(r;\beta)|\leq C+1.
		\end{equation}
		Without loss of generality, we assume that 
		\[r_1:=\inf\set{r>r_0| \abs{v(r;\beta)-v(r_0;\beta)}>1}<+\infty.
		\]
		We estimate the lower bound for $r_1$ to show \eqref{rdelta}. 
		Notice that $v(r;\beta)$ satisfies
		\begin{equation}\label{v'}
			\begin{aligned}
				v'(r)=r^{1-N}\int_0^r s^{N-1}\Big(&(\log \beta)^{-1}V_\delta((\log \beta)^{-\half}s)v
				\\&-(\log \beta)^{-1}B_\delta((\log \beta)^{-\half} s)v\log v^2 
				- 2B_\delta((\log \beta)^{-\half} s)v\Big)\d s.
			\end{aligned}
		\end{equation}
		We have by \eqref{v'} and (V1) that
		\[
			\begin{aligned}
				1=&|v(r_1)-v(r_0)|=\left|\int_{r_0}^{r_1} v'(r) \d r\right|\\
				\leq &C\int_{r_0}^{r_1}r^{1-N} \d r \int_0^r s^{N-1} (\log \beta)^{-1} |V_\delta((\log \beta)^{-\half} s)|\d s 
				+ 3(1+|C\log C|) \int_{r_0}^{r_1}r \d r  \\
				\leq &C\int_{r_0}^{r_1}(\log \beta)^{\frac{N-2}2} r^{1-N} \d r \int_0^{r(\log\beta)^{-\half} }s^{N-1} |V_\delta(s)|\d s 
				+ 3(1+|C\log C|) \int_{r_0}^{r_1}r \d r  \\
				\leq &C\int_{r_0}^{r_1}(\log \beta)^{-\frac12}  \|V_\delta(|\cdot|)\|_{L^N(|x|\leq r_1(\log\beta)^{-\half})} \d r
				+ C'(r_1-r_0)^2  \\
				\leq & C'\left(\|V_\delta(|\cdot|)\|_{L^N(|x|\leq r_1(\log\beta)^{-\half})}(r_1-r_0)+(r_1-r_0)^2\right),
			\end{aligned}
		\]
		where $C'$ is a constant independent of $\beta$.  Then we have proved \eqref{rdelta} and
		 verified the claim.
		 By \eqref{v'} again, we can check that for any $T>0$,
		 \[\max_{r\in [0,T]}|v'(r)|\leq  C \|V_\delta(|\cdot|)\|_{L^N(|x|\leq T(\log\beta)^{-\half})} 
		 + 3(1+|C\log C|) T.
			 \]
		So $v(r;\beta)$ is bounded in $C_{loc}^1([0, +\infty))$, and thus by the equation, bounded
		in $C^2_{loc}((0,+\infty))$.
		Hence, by the Arz\'ela-Ascoli theorem, as $\beta\to\infty$,
		$v(r;\beta)$ converges in $C_{loc}([0,+\infty))\cap C^1_{loc}((0,\infty))$ to the unique solution of
		\eqref{eq4.9} with $b_0=2B_\delta(0)$.
		\end{proof}
		\begin{Lem}\label{prop4.1}
			Suppose that problem \eqref{A4.1} has two distinct 
			solutions $u_1,u_2$ such that $u_1(0)<u_2(0)$. Then there exists
			a solution $u_3$ of \eqref{A4.1}  such that
			\begin{equation}
			u_3(0)\geq u_2(0) \quad \text{and}\quad
			\#\set{r\in(0,\infty) | u_1(r)=u_3(r)}= 1.\label{A6}
			\end{equation}
		\end{Lem}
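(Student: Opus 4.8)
The plan is a shooting argument in the initial height $\beta:=u(0)$. By Remark~\ref{Rek2.1}, the initial value problem \eqref{IV} has, for each $\beta>0$, a unique solution $u(\cdot;\beta)$, which depends continuously on $\beta$ in $C^1_{loc}([0,\infty))\cap C^2_{loc}((0,\infty))$; writing $\beta_i:=u_i(0)$ we have $u_i=u(\cdot;\beta_i)$ and $\beta_1<\beta_2$. The main ingredients are the large-$\beta$ profile of Lemmas~\ref{lemA4}--\ref{lemA3} and the quotient mechanism of Lemma~\ref{lem4.2}. Since two distinct solutions of \eqref{A4.1} can only cross transversally (coinciding value and derivative would force coincidence by uniqueness), Lemma~\ref{lem4.2} already gives the conclusion with $u_3:=u_2$ whenever $u_1$ and $u_2$ meet exactly once; so I may assume from now on that $u_1$ and $u_2$ meet at least twice.

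Fix $\beta\ge\beta_2$. As $u(0;\beta)=\beta\ge\beta_2>\beta_1$, we have $u(\cdot;\beta)>u_1$ near $0$; let $\rho(\beta)\in(0,+\infty]$ be the first $r$ with $u(r;\beta)=u_1(r)$, and, if $\rho(\beta)<\infty$, let $\sigma(\beta)\in(\rho(\beta),+\infty]$ be the supremum of the $r$ for which $u(\cdot;\beta)$ stays positive and strictly below $u_1$ on $(\rho(\beta),r)$. I claim $\rho(\beta)<\infty$ always: were it infinite, then $u(\cdot;\beta)>u_1>0$ on $(0,\infty)$, and running the computation in the proof of Lemma~\ref{lem4.2} for $\psi:=u(\cdot;\beta)/u_1$ shows $r^{N-1}u_1^2\,\psi'$ is strictly decreasing and vanishes as $r\to0^+$, hence $\psi$ is decreasing; then $u(\cdot;\beta)\le(\beta/\beta_1)u_1$, which by the Gaussian decay of $u_1$ (Proposition~\ref{LemA1}(ii)) forces $u(\cdot;\beta)\in H^1_r$ to solve \eqref{A4.1}, and Lemma~\ref{lem4.2} then says it must cross $u_1$ --- a contradiction. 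Hence, past $\rho(\beta)$, exactly one of the following holds: (a) $\sigma(\beta)=+\infty$; (b) $\sigma(\beta)<\infty$ with $u(\sigma(\beta);\beta)=u_1(\sigma(\beta))>0$ (the graph returns to $u_1$); or (c) $\sigma(\beta)<\infty$ with $u(\sigma(\beta);\beta)=0$ (the graph reaches $0$ first). In case (a), $0<u(\cdot;\beta)<u_1$ near infinity together with the Gaussian decay of $u_1$ again forces $u(\cdot;\beta)$ to solve \eqref{A4.1}, and it meets $u_1$ exactly once; since $\beta\ge\beta_2$, such a $\beta$ yields the desired $u_3$. So it suffices to exhibit one $\beta\ge\beta_2$ in case (a).

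To produce it, I use a threshold. At $\beta=\beta_2$ we are in case (b): $\rho(\beta_2)$ and $\sigma(\beta_2)$ are the first two (finite) crossings of $u_1,u_2$, by Lemma~\ref{lem4.2} and the two-intersection assumption. For $\beta$ large, Lemma~\ref{lemA4} gives $\beta^{-1}u((\log\beta)^{-1/2}r;\beta)\to w$ in $C^1_{loc}((0,\infty))$, where $w$ solves \eqref{eq4.9} with $b_0=2B_\delta(0)$; by Lemma~\ref{lemA3}, $w$ decreases strictly to a simple zero at $r_1$, so $u(\cdot;\beta)$ is decreasing on its positivity interval $(0,R(\beta))$ with first zero $R(\beta)=O((\log\beta)^{-1/2})\to0$ while $u_1\approx\beta_1>0$ there; hence every large $\beta$ lies in case (c). Set $\beta_3:=\sup\{\beta\ge\beta_2:u(\cdot;\beta)\text{ is in case (b)}\}\in[\beta_2,+\infty)$; I claim $u_3:=u(\cdot;\beta_3)$ is in case (a). This follows by elimination using continuous dependence: case (b) is stable under small changes of $\beta$ (all of its crossings are transversal), so $u_3$ cannot be in case (b) without contradicting maximality; and if $u_3$ were in case (c), then $u_3<u_1$ on $(\rho(\beta_3),R(\beta_3))$, whereas for $\beta_n\uparrow\beta_3$ in case (b) the return points $\sigma(\beta_n)$, which are finite and lie below the first zeros $R(\beta_n)\to R(\beta_3)$, would accumulate at some $s^\ast\in[\rho(\beta_3),R(\beta_3)]$ and force, in the limit, one of the impossible equalities $u_3(s^\ast)=u_1(s^\ast)>0$ at interior $s^\ast$, $u_3(R(\beta_3))=u_1(R(\beta_3))>0$, or $(u_3,u_3')=(u_1,u_1')$ at $s^\ast=\rho(\beta_3)$. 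Hence $u_3$ is in case (a), and it satisfies \eqref{A6}.

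The main obstacle is making the threshold step airtight: one needs the continuity of $\rho(\cdot)$, $\sigma(\cdot)$ and of the first zero $R(\cdot)$ at transversal crossings and zeros, the stability of case (b), and --- the delicate point --- that $\sigma(\beta_n)$ and $R(\beta_n)$ stay bounded as $\beta_n\uparrow\beta_3$, so that the case-(c) accumulation argument genuinely applies; the a~priori Gaussian decay of Proposition~\ref{LemA1}(ii) and the sandwich $0<u(\cdot;\beta)<u_1$ on the relevant interval are what control the non-escape of these points to $+\infty$. The remaining bookkeeping is transversality together with Lemma~\ref{lem4.2}.
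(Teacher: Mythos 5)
Your proposal is correct and follows essentially the same route as the paper: a shooting argument in the initial height $\beta$ for the initial value problem \eqref{IV}, with the reduction via Lemma \ref{lem4.2}, Lemmas \ref{lemA3}--\ref{lemA4} supplying the large-$\beta$ behaviour, and a threshold value of $\beta$ producing $u_3$. Your bookkeeping at the threshold (the trichotomy (a)/(b)/(c) and the elimination of case (c) through transversality of the first crossing and of the first zero) differs only in presentation from the paper's $\beta^*$ defined via the intersection count $n(\beta)$, and the details you flag (continuity of crossings and zeros, stability of case (b)) are of the same standard kind that the paper's own argument leaves implicit.
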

		\begin{proof}
			The proof is similar to \cite[Appendix A]{KT} and \cite[Lemma A.3]{BO}.
			Let $u(r;\beta)$, $\beta > 0$ be the solution of the initial value problem
			\eqref{IV}. 
			For $\beta\geq u_2(0)$, let
			$$n(\beta)=\#\set{r>0 | u(r;\beta)=u_1(r)}.
			$$
			By Lemma \ref{lem4.2}, we may assume
			$n(u_2(0))\geq 2$.
			Then by the continuous dependence of solution to initial value,
			$n(\beta)\geq 2$ for $\beta>u_2(0)$ sufficiently close to $u_2(0)$.
			Denote 
			$$\beta^*:=\sup\set{\beta>u_2(0) | n(\widetilde\beta)\geq 2\
			 \mbox{for all}\ \widetilde{\beta}\in (u_2(0), \beta)}.
			$$
			For $\beta\in(u_2(0), \beta^*)$ let $\rho_1(\beta)$ and $\rho_2(\beta)$ be the first and 
			the second intersection points of $u(\cdot;\beta)$ and $u_1$.
			Then we have $u(\cdot;\beta)>0$ in $(0, \rho_2(\beta))$ for $\beta\in(u_2(0),\beta^*)$.
			Otherwise, we set $$\beta_0:=\inf\set{\beta\in(u_2(0),\beta^*)| u(r;\beta)\leq 0\ \mbox{for some}\ r}.$$ 
			Then the solution curve of $u(r;\beta_0)$ is tangent to the $r$-axis, which is impossible 
			by the uniqueness of the initial value problem.
			\noindent
			By Lemma \ref{lemA4}, we can find  $\bar\beta>  u_2(0)$ , such that
			\begin{align*}
				\mbox{(i)}&\quad   u(r;\bar\beta)\  \text{hits zero at some}\ r_0\in (0,\infty),\\
				\mbox{(ii)}&\quad   \#\set{r\in(0,r_0) | u(r;\bar\beta)=u_1(r)}=1.
			\end{align*}
			The existence of $\bar\beta$ implies $\beta^*<\infty$.
			Next we will prove that $u_3(r):= u(r;\beta^*)$ is the solution of \eqref{A4.1}
			 satisfying  \eqref{A6}.
			
			 \textbf{Case 1.} $\rho_1(\beta^*)$ does not exist, i.e. $\rho_1(\beta)\to\infty$ as $\beta\to\beta^*$.This case will not happen. 
			 In fact,
			 for $\beta\in(u_2(0),\beta^*)$, we have, by Lemma \ref{lemA2'},
			$$ \frac{\d}{\d r}\left(\frac{u_1}{u(\cdot;\beta)}\right)>0\quad \text{in} \ (0,\rho_1(\beta)).
			$$
			Then
			 $$ u_1<u(\cdot;\beta)<\frac{\beta}{u_1(0)}u_1 \quad\text{in } [0,\rho_1(\beta)).
			$$
			Taking limit as $\beta\to\beta^*$, we get
			$$u_1\leq u_3\leq \frac{\beta^*}{u_1(0)}u_1\quad \text{in }(0,\infty),
			$$
			which means $u_3$ solves \eqref{A4.1}. 
			By the uniqueness of initial value problem,
			 $u_3>u_1$.  This contradicts to Lemma \ref{lem4.2}.
			
			\textbf{Case 2.} $\rho_1(\beta^*)$ exists but $\rho_2(\beta^*)$ does not exist,
			i.e. $\rho_2(\beta)\to \infty$ as $\beta\to \beta^*$.
			
			In this case, for $\beta\in(u_2(0),\beta^*)$ we have
			\begin{align*}
			&0<u_1(r)<u(r;\beta)\quad \text{in }(0,\rho_1(\beta)),\\
			&0<u(r;\beta)<u_1(r)\quad \text{in }(\rho_1(\beta),\rho_2(\beta)).
			\end{align*}
			Taking   limits as $\beta\to\beta^*$, we get our desired $u_3$.
			\end{proof}
			\vskip .1in
			\noindent{\bf Acknowledgement.} 
			This work is supported by NSFC-12001044, NSFC-12071036 and the Fundamental Research Funds for the Central Universities 2019NTST17.

\end{document}